 \def\K{\mathbb{K}}
 \def\R{\mathbb{R}}
 \def\T{\mathbb{T}}
 \def\dis{\displaystyle}
 \newcommand{\ubar}[1]{\underaccent{\bar}{#1}}
 \newtheorem*{thm-A}{Theorem A}
 \newtheorem*{thm-B}{Theorem B}
 \newtheorem*{thm-B'}{Theorem B'}
 \newtheorem*{thm-C}{Theorem C}
 \newtheorem*{thm-D}{Theorem D}
 \newtheorem*{cor*2}{Corollary 2}
 \newtheorem*{mainthm*}{Main Theorem} 
 \newtheorem*{lemma*}{Lemma}
 \newtheorem*{key lemma*}{Key lemma}
 \newtheorem*{keylemma*}{Key Lemma}
 \newtheorem{defi}{Definition}[section]
 \newtheorem{thm}{Theorem}[section]
 \newtheorem{prop}{Proposition}[section]
 \newtheorem{lemma}{Lemma}[section]
 \newtheorem{cor}{Corollary}[section]
 \newtheorem{rmk}{Remark}[section]
 \numberwithin{equation}{section}
 \newtheorem{theorem}{Theorem}
 \newtheorem{cor-thm}{Corollary}[theorem]
\begin{document}

 \title[Topological obstructions for robust transitivity]{Topological obstructions for robustly transitive endomorphisms on surfaces}

 \author[C. Lizana]{C. Lizana $\dagger$}
\address{$\dagger$ Departamento de Matem\'{a}tica. Instituto de Matem\'{a}tica
e Estat\'{i}stica.
Universidade Federal da Bahia. Av. Adhemar de Barros s/n, 40170-110. 
Salvador, Bahia, Brazil.} \email{clizana@ufba.br}


\author[W. Ranter]{W. Ranter $\ddagger$}
\address{$\ddagger$ Departamento de Matem\'{a}tica. Universidade Federal de Alagoas, Campus A.S. Simoes s/n, 57072-090. Macei\'o, Alagoas, Brazil.}
\address{$\ddagger$ Postdoctoral fellow at Math Section of ICTP. Strada Costiera, 11. I-34151, Trieste, Italy.}
\email{wagnerranter@gmail.com}


 \date{\today}

 \maketitle

\begin{abstract}
We address the problem of necessary conditions and topological obstructions for the existence of robustly transitive endomorphisms on surfaces. Concretely, we show that a weak form of hyperbolicity (namely, partial hyperbolicity) is a necessary condition in order to have robustly transitive displaying critical points, and  the only surfaces supporting this class of systems are either the torus or the Klein bottle. Furthermore, we also prove that the induced action by a partially hyperbolic endomorphism in the first homology group has at least one eigenvalue with modulus larger than one. 
\end{abstract}

\section{Introduction and Statement of the Main Results}\label{intro}

Throughout  this paper, unless specified, $M$ denotes a closed surface and $\mathrm{End}^1(M)$ the space of all the $C^1$-maps from $M$ into itself endowed with the usual $C^1$-topology. The elements of $\mathrm{End}^1(M)$ will be called \textit{endomorphisms}. Some of them display \textit{critical point}, that is, point such that the derivative is not an isomorphism; and the complement are endomorphisms without critical points which are local diffeomorphisms and diffeomorphisms. An endomorphism  $f \in \mathrm{End}^1(M)$ is said to be \textit{robustly transitive} if there exists a neighborhood $\mathcal{U}$ of $f$ in $\mathrm{End}^1(M)$ such that every $g \in \mathcal{U}$ is transitive, recalling that \textit{transitive} means that such map has a dense forward orbit in the whole surface.

The main aim of our paper is to give necessary conditions and some topological obstructions for the existence of robustly transitive surface endomorphisms displaying critical points.
The first result we present is in regard of necessary conditions, we show that in the case of surface endomorphisms displaying critical points, a weak form of
hyperbolicity is necessary for robust transitivity. Concretely,

\begin{theorem}\label{thm-A}
Every robustly transitive surface endomorphism displaying critical points is a partially hyperbolic endomorphism.
\end{theorem}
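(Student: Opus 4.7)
The plan is to argue by contradiction along the lines of Ma\~n\'e's program for diffeomorphisms, suitably adapted to the endomorphism setting where critical points are present. Assume $f \in \mathrm{End}^1(M)$ is robustly transitive and displays critical points, but is not partially hyperbolic. The goal is to exhibit an arbitrarily $C^1$-small perturbation $g$ of $f$ which has a hyperbolic periodic sink, contradicting the transitivity of $g$, since the basin of a sink is a proper open forward-invariant set.

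Partial hyperbolicity for an endomorphism is defined through a dominated splitting $E^{cs}\oplus E^u$ of index $(1,1)$ on the tangent bundle along orbits (formally, over the natural extension), with $E^u$ uniformly expanded by $Df$. Its failure splits into two scenarios. In the first, no dominated splitting of index one exists along orbits relevant to transitivity, for instance along the closure of a dense orbit of $f$. Here I would use a Franks-type perturbation lemma adapted to surface endomorphisms to realize, along a periodic orbit witnessing the failure of domination, any sufficiently small prescribed perturbation of the return derivative; in particular one may push both eigenvalues of $Df^n$ into the open unit disk, producing a sink. In the second scenario, a dominated splitting of index one exists but $E^u$ is not uniformly expanded, so there are periodic orbits along which $\|Df^n|_{E^u}\|$ is arbitrarily close to $1$; a Franks-type perturbation of the $E^u$ eigenvalue below modulus one then creates a sink, again contradicting robust transitivity.

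The main technical obstacle is executing these derivative perturbations in the presence of the critical set $C(f)$, where $Df$ is singular and the classical Franks lemma does not apply directly. I would address this by first arranging, through a preliminary $C^1$-small perturbation, that $C(f)$ is a smooth compact curve in general position, and then selecting the periodic orbit witnessing non-hyperbolic behavior so that it spends sufficient time in the regular region $M\setminus C(f)$, where the standard linear perturbation machinery goes through. The presence of critical points in fact helps to rule out the competing source alternative that usually appears in Ma\~n\'e-style arguments: because $f$ collapses a neighborhood of $C(f)$ onto the one-dimensional set $f(C(f))$, orbits are compressed toward a lower-dimensional set, which is incompatible with the existence of a hyperbolic source absorbing an open neighborhood in backward time. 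The sink scenario therefore remains as the binding contradiction with the hypothesis of robust transitivity, yielding that $f$ must be partially hyperbolic.
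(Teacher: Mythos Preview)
Your scenario (b) is essentially what the paper does in Section~6: once a dominated splitting $E\oplus F$ exists, the failure of uniform expansion on $F$ yields, via an ergodic argument and a shadowing-type lemma, a periodic point whose return derivative has both eigenvalues close to $1$ in modulus, and then Franks' Lemma produces a sink. So that half of your outline is on target.

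The genuine gap is in scenario (a), the production of a dominated splitting. Your plan is the Ma\~n\'e program: find a periodic orbit witnessing the failure of domination and perturb the return cocycle to create a sink. But in the non-invertible setting this fails for a reason you half-acknowledge and then argue away incorrectly. When domination fails, the Ma\~n\'e--Bonatti mechanism lets you mix the two directions and make the eigenvalues of the return map complex conjugate of equal modulus; after that, a small rescaling pushes them both inside \emph{or both outside} the unit circle. Which one you get is governed by $|\det Df^n_p|$, and for a local diffeomorphism of degree $>1$, or for an orbit staying in the regular region as you propose, this determinant can be large, forcing the \emph{source} alternative. Sources are \emph{not} obstructions to transitivity for endomorphisms (expanding maps are robustly transitive and every periodic point is a source), so your contradiction evaporates. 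Your heuristic that critical points ``compress orbits toward a lower-dimensional set'' and hence preclude sources is not a valid argument: it says nothing about periodic orbits that live entirely in the regular region, which is exactly where you placed them.

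The paper's way out is to abandon the sink/source dichotomy altogether for step (a) and replace it with a different obstruction tailored to critical points: an endomorphism with \emph{full-dimensional kernel} (some $Df^n_x\equiv 0$) cannot be robustly transitive. The candidate splitting is defined not over periodic orbits but over the set $\Lambda_f$ of full orbits hitting $\mathrm{Cr}(f)$ infinitely often, with $E=\ker Df^{\tau^+}$ and $F=\mathrm{Im}\,Df^{|\tau^-|}$. Failure of the angle or domination condition then allows one, via small rotations composed with $Df$ (Lemma~3.2), to rotate $F$ onto $E$ along a finite orbit segment and thereby create full-dimensional kernel after a Franks perturbation. No Closing Lemma or Connecting Lemma is used. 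This is the missing idea in your proposal; without it, scenario (a) does not close.
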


The definition of partial hyperbolicity for endomorphisms requires some preliminary notions about cone-field.
Recall that a \textit{cone-field} $\mathscr{C}$ on $M$ is a family of closed convex non-vanishing cone $\mathscr{C}(x) \subseteq T_xM$ at each point $x$ in $M$. A cone-field $\mathscr{C}$ is said to be \textit{invariant} (or \textit{k-invariant}, if we want to emphasize the role of $k$) if $Df_x^k(\mathscr{C}(x))$ is contained in the interior of $\mathscr{C}(f^k(x))$ for all $x$ in $M$ which will be denoted by $\mathrm{int}(\mathscr{C}(f^k(x)))$. Finally, an endomorphism $f$ is a \textit{partially hyperbolic endomorphism} if there exist $\lambda >1, \ell \geq 1,$ and an invariant cone-field $\mathscr{C}$ satisfying:
\begin{itemize}
\item[[PH1\!\!]] \textit{Transversal to the kernel:} for all $x\in M$ and $n\geq 1$,
$$\ker(Df_x^n)\cap \mathscr{C}(x)=\{0\};$$
\item[[PH2\!\!]] \textit{Unstable property:} for all $x\in M$ and $v \in \mathscr{C}(x)$, $$ \|Df^{\ell}_x(v)\|\geq \lambda\|v\|.$$ 
\end{itemize}

Let us call this cone-field by \textit{unstable cone-field} for $f$ and denote it by $\mathscr{C}^u$. It should be noted that, up to an iterated, we can assume that the unstable cone-field $\mathscr{C}^u$ is also $\ell$-invariant. If an invariant cone-field satisfies only the transversality property ([PH1]), then we say that $f$ \textit{admits a dominated splitting}. 


\medskip
 Let us briefly comment on the present state of the art. 
 Ma\~{n}\'{e} proved  
 in \cite{Mane-Closinglemma}   that every robustly transitive surface diffeomorphism admits a hyperbolic structure in the whole surface. Ma\~{n}\'{e}'s result has no direct generalization to higher dimension. D\'{i}az-Pujals-Ures and Bonatti-D\'{i}az-Pujals proved in \cite{DPU} and \cite{BDP},  for manifolds of dimension three and larger than four, respectively, proved that robust transitivity implies a weak form of hyperbolicity, called 
 \textit{volume hyperbolicity}\footnote{A diffeomorphism is volume hyperbolic if the invariant splitting $TM=E_1\oplus \cdots \oplus E_k$ is dominated and $Df$ restricts to $E_1$ and $E_k$ are volume contracting and volume expanding, respectively. In particular, it implies partial hyperbolicity in three-dimension.}. For local diffeomorphisms, Lizana-Pujals gave in \cite{LP}  necessary and sufficient conditions for the existence of robustly transitive, and, in particular, it is shown that no form of weak hyperbolicity is necessary for robust transitivity. However, for endomorphisms admitting critical points, the robust transitivity requires, as for diffeomorphisms, a weak form of hyperbolicity as it is stated in Theorem \ref{thm-A} above.

The existence of an invariant cone-field on a surface provides some topological obstructions. The next result gives a classification answering  a natural question, which surfaces support robustly transitive endomorphisms.

\begin{theorem} \label{thm-B}
The only surfaces that might admit  robustly transitive endomorphisms are either the torus $\T^2$ or the Klein bottle $\K^2$.
\end{theorem}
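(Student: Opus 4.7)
The strategy is to translate the existence of a robustly transitive endomorphism on $M$ into a purely topological constraint on the tangent bundle, and then appeal to the classification of closed surfaces: the goal is to force $\chi(M) = 0$, which leaves only $\T^2$ and $\K^2$.

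First, in the case where $f$ displays critical points, Theorem~\ref{thm-A} supplies a continuous invariant unstable cone-field $\mathscr{C}^u$ on all of $M$. The plan is to extract from $\mathscr{C}^u$ a continuous rank-one subbundle $L \subset TM$: at each $x \in M$, the cone $\mathscr{C}^u(x) \subset T_xM$ is a closed convex proper cone with non-empty interior in a two-dimensional vector space, hence a wedge bounded by two half-lines through the origin, and its angle bisector picks out a line $L(x) \subset T_xM$ that varies continuously with $x$. Equipping $M$ with any continuous Riemannian metric yields the splitting $TM = L \oplus L^\perp$. Passing to the (at most four-sheeted) cover $\widehat M \to M$ on which both $\widehat M$ and the pulled-back line bundle $\widehat L$ become orientable produces a nowhere-vanishing section of $\widehat L$, i.e., a nowhere-vanishing vector field on $\widehat M$. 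By Poincar\'e--Hopf, $\chi(\widehat M) = 0$, and by multiplicativity of $\chi$ under finite covers, $\chi(M) = 0$.

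Second, in the complementary case where $f$ has no critical points, $f : M \to M$ is a covering of some degree $d \ge 1$. If $d \ge 2$, the relation $\chi(M) = d\,\chi(M)$ forces $\chi(M) = 0$ directly. If $d = 1$, then $f$ is a diffeomorphism and Ma\~{n}\'{e}'s theorem applies, yielding that $f$ is Anosov; the Franks--Newhouse classification of Anosov surface diffeomorphisms again gives $M \in \{\T^2, \K^2\}$.

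The hard part will be carrying out the passage from a continuous cone-field to a continuous line subbundle rigorously (the bisector construction is clean, but one should check it does not degenerate when the cone $\mathscr{C}^u(x)$ widens) and then controlling the orientation obstructions via a finite cover; once these are in hand, the topological conclusion follows from standard machinery (Poincar\'e--Hopf together with the classification of closed surfaces).
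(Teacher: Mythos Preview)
Your proposal is correct and follows essentially the same route as the paper: extract a continuous line subbundle of $TM$, pass to a finite orientation cover, apply Poincar\'e--Hopf to get $\chi(M)=0$, and invoke the classification of closed surfaces. The only difference is cosmetic---the paper takes the line bundle to be the subbundle $E$ of the dominated splitting directly (which descends to $M$ since it depends only on forward orbits, cf.\ Remark~\ref{proj-E}), rather than bisecting the unstable cone, so your worry about cone degeneration never arises.
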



Some comments are in order. For diffeomorphisms, it follows from \cite{Mane-Closinglemma} that the only surface admitting a robustly transitive is the 2-torus.
For local diffeomorphisms, it is well-known that 
these maps are covering maps and the only surfaces that can cover themselves are either the torus $\T^2$ or the Klein bottle $\K^2$. However, it was not known, until now,  if the existence of robustly transitive endomorphisms displaying critical points implies topological obstructions. 
Although some examples of robustly transitive local diffeomorphisms in the torus $\T^2$ and the Klein bottle $\K^2$ (as expanding endomorphisms \footnote{$f \in \mathrm{End}^1(M)$ is an expanding endomorphisms if $\|Df_x(v)\|> \|v\|$ for all $x \in M$ and $v \in T_xM$.} for instance) are well-known, examples of robustly transitive endomorphisms admitting critical points first appear in \cite{Berger-Rovella} and \cite{ILP} on the torus $\mathbb{T}^2$; and recently in \cite{CW2}, we built a new class of robustly transitive endomorphisms exhibiting persistent critical points on the 2-torus 
and on the Klein bottle, based on the  geometric construction developed by Bonatti-D\'{i}az 
 in \cite{BD} to produce robustly transitive diffeomorphisms.  The proof of Theorem \ref{thm-B} for endomorphisms displaying critical points use a classical argument provided the existence of continuous subbundles over $M$, which follows from Theorem \ref{thm-A}, and it will be presented in Section \cref{sec:main-thm-consequences}.

An immediate and interesting consequence from Theorem \ref{thm-B} is the following.

\begin{cor-thm}
There are not robustly transitive on the sphere $\mathbb{S}^2$.
\end{cor-thm}

The third result we present now is in regard of topological obstructions as well.
More precisely,

\begin{theorem}\label{thm-C}
The action of a transitive endomorphism admitting a dominated splitting in the first homology group of $M$ has at least one eigenvalue with modulus larger than one.
\end{theorem}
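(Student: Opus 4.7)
The plan is a reduction to the toral case followed by a contradiction. A dominated splitting supplies a continuous invariant cone-field on $M$, which in dimension two delivers a continuous non-vanishing line field (the axes of the cones); Poincar\'e--Hopf then forces $\chi(M)=0$, so $M$ is either $\T^2$ or $\K^2$, exactly as in the topological reduction underlying Theorem~\ref{thm-B}. I carry out the argument on $\T^2$, where $H_1(\T^2;\R)=\R^2$; the Klein bottle case $H_1(\K^2;\R)=\R$ then follows by passing to the orientation double cover and tracking the eigenvalues of the lifted action.

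Assume $M=\T^2$, let $A=f_*\in M_2(\Z)$, and fix a lift $\tilde f\colon\R^2\to\R^2$. Then $\tilde f(x)=Ax+h(x)$ for a continuous $\Z^2$-periodic (hence bounded) map $h$. Suppose, toward a contradiction, that every eigenvalue of $A$ has modulus at most one. Since the eigenvalues of $A$ are algebraic integers whose Galois conjugates all lie in the closed unit disk, Kronecker's theorem forces them to be roots of unity or zero, and so $\|A^n\|$ grows at most polynomially in $n$. Iterating the decomposition $\tilde f=A+h$ gives
\[
\tilde f^n(x)-A^n x \;=\; \sum_{i=0}^{n-1} A^{\,n-1-i}\, h\bigl(\tilde f^{\,i}(x)\bigr),
\]
of norm at most $\|h\|_\infty\sum_{j<n}\|A^j\|$; consequently $\|\tilde f^n(x)\|$ is polynomially bounded in $n$, uniformly for $x$ in a fundamental domain.

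To contradict this bound I use the cone-field. Lifting $\mathscr{C}$ to $\tilde{\mathscr{C}}$ on $\R^2$, the $\Z^2$-equivariance together with continuity over the compact $\T^2$ forces every tangent direction of $\tilde{\mathscr{C}}$ to lie inside a single proper convex cone $C_0\subset\R^2$. Hence any $C^1$-arc $\gamma$ tangent to $\tilde{\mathscr{C}}$ has $\gamma(b)-\gamma(a)\in C_0$, with norm bounded below by a fixed multiple of the length of $\gamma$. The key step is to exhibit a tangent arc $\gamma_0$ through a recurrent point for which $\mathrm{length}\bigl(\tilde f^n(\tilde\gamma_0)\bigr)$ grows faster than any polynomial in $n$: the displacement estimate then separates the endpoints of $\tilde f^n(\tilde\gamma_0)$ by more than any polynomial, contradicting the preceding paragraph and forcing $A$ to possess an eigenvalue of modulus larger than one.

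The main obstacle is exactly producing this super-polynomial length growth, because the hypothesis is mere dominated splitting rather than partial hyperbolicity, so no expansion is available a~priori. The plan is to use [PH1] (transversality to the kernel) to ensure that the log-expansion rate $v\mapsto\log(\|Df_x(v)\|/\|v\|)$ is well-defined, bounded, and continuous on the unit vectors of $\mathscr{C}$, then use transitivity to select a recurrent point $x_0$ in the support of an ergodic $f$-invariant measure and apply a Pliss / Ma\~n\'e-type selection of hyperbolic times along its orbit. Strict cone invariance combined with recurrence should yield a positive-density set of instants at which vectors of $\mathscr{C}$ expand by a definite factor; integrating these local expansions along a tangent arc through $x_0$ then delivers the required (in fact exponential) length growth, completing the argument.
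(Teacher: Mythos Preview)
Your overall architecture---lift to the universal cover, assume the spectral radius of $f_*$ is at most one, deduce sub-exponential (indeed polynomial) growth of diameters, and contradict this with super-polynomial growth of the length of iterated $u$-arcs---is exactly the paper's strategy. But there are two genuine gaps, and you have essentially flagged the second one yourself.

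First, the claim that continuity and $\Z^2$-periodicity force every tangent direction of $\tilde{\mathscr{C}}$ to lie in a fixed proper convex cone $C_0\subset\R^2$ is false. The axis of the cone-field (the $F$-direction, or equivalently the transverse line field $E$) is a continuous line field on $\T^2$, and such a field can rotate by a full turn along a generator; there is then no global half-plane containing all the cones. Consequently a lifted $u$-arc can spiral, and its length need not be comparable to the displacement of its endpoints. The paper replaces your displacement estimate by an area estimate (Lemma~\ref{area-curve}): using Poincar\'e--Bendixson with the transverse line field $\widetilde E$, one shows that a lifted $u$-arc is simple and that an $\varepsilon$-ball meets it in at most one component, whence $\mathrm{area}(B(\tilde\gamma,\varepsilon))\ge C\,\ell(\tilde\gamma)$. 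Since the ball containing $\tilde f^n(\tilde\gamma)$ has sub-exponential area, exponential length growth yields the contradiction.

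Second, and more seriously, your plan for producing length growth does not work as stated. Strict cone invariance is a statement about \emph{directions}, not norms; it gives no expansion whatsoever, and with only a dominated splitting the Lyapunov exponent along $F$ can perfectly well be non-positive for every ergodic measure. A Pliss selection at a single typical point also does not translate into growth of the length of an \emph{arc}, which requires simultaneous expansion along the whole arc. The paper obtains exponential growth of every $u$-arc (Theorem~\ref{thm-u-arc}) by an argument in the spirit of Pujals--Sambarino: if some $u$-arc $\gamma$ stayed $\delta$-bounded under iteration, the domination inequality would force $E$ to be uniformly contracting along $\gamma$ (Lemma~\ref{lema-PS}), yielding uniform local stable manifolds and a box picture; one then shows (Theorem~\ref{PS}) that $\omega(\gamma)$ is a normally attracting periodic curve or a (semi-)attracting periodic point, which is incompatible with transitivity. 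This is where transitivity enters essentially, and it does so topologically, not through ergodic-theoretic selection. Once no $\delta$-$u$-arc exists, a compactness argument gives uniform doubling in bounded time, hence exponential length growth.
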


In particular, the proof of Theorem \ref{thm-C} will allow us to conclude the following.

\begin{cor-thm}\label{cor1}
The action of partially hyperbolic endomorphisms in the first homology group of $M$ has at least one eigenvalue with modulus larger than one.
\end{cor-thm}

From Theorem \ref{thm-C} we get the following corollary.

\begin{cor-thm}\label{cor2}
The action of robustly transitive endomorphisms
in the first homology group of $M$ has at least one eigenvalue with modulus larger than one.
\end{cor-thm}

Consequently, one obtains the following.

\begin{cor-thm}\label{cor3}
There are not robustly transitive surface endomorphisms homotopic to the identity.
\end{cor-thm}


Let us comment the main ideas of the proof of Theorem \ref{thm-C}. First we prove that the length of the iterated of any tangent arcs  to the cone-field grows exponentially; and later, based on \cite{BBI04} where 
it is proved that the action of partially hyperbolic diffeomorphisms on three manifolds on the first homology group is partially hyperbolic as well, we prove that if the action has some  
 eigenvalue of modulus greater than one, then there exists an arc such that the length of its iterated grows sub-exponentially. In order to prove the first part, we  adapt the arguments of \cite{Pujals-Sambarino}  to our setting. 
For further details and the proof of Corollaries \ref{cor1} and \ref{cor2}  see Section~\cref{homotopy}.

\medskip


Finally, we introduce the main result of this work giving a dichotomy that will be used to prove Theorem \ref{thm-A}. Before continuing, let us fix the following notation and introduce some previous results.  We denote the set of all the critical points of $f$ by $\mathrm{Cr}(f)$ and by $\mathrm{int}(\mathrm{Cr}(f))$ its interior; further, $|\ker(Df)|$ denotes the maximum of $|\ker(Df_x)|$ over $M$, where $|\ker(Df_x)|$ is the dimension of $\ker(Df_x)$. An endomorphism 
$f \in \mathrm{End}^1(M)$ is said to have \textit{full-dimensional kernel} if there exist an integer $n \geq 1$ and $x \in \mathrm{Cr}(f)$ such that 
\begin{align}
|\ker(Df^n_x)|=\dim M.
\end{align}
This property is an obstruction for robust transitivity and will play an important role in our approach. More precisely,  

\begin{keylemma*}[Full-dimensional kernel obstruction]\label{key obstruction}
Let $f: M \to M$ be an endomorphism having full-dimensional kernel. Then $f$ cannot be a robustly transitive endomorphism. 
\end{keylemma*}


Since the proof is not difficult we present it as follows.

%
%

\begin{proof}[Proof of Key Lemma]
Since $|\ker(Df_x^n)|=2$, one has that $Df_x^n=0$. Then, by \hyperref[Franks-lemma]{Franks' Lemma}, there exists $\tilde{f}$ $C^1$-close to $f$ such that $\tilde{f}^n$ behave as $Df_x^n$ in a neighborhood of $x$, and so $\tilde{f}^n$ is constant in a neighborhood of $x$ implying that $\tilde{f}$ cannot be transitive.
\end{proof}

An interesting fact, that will be shown in Section \cref{sec:main-thm}, is the following dichotomy 
which appears ``naturally" as an obstruction for domination property. 

\begin{mainthm*}\label{main-theo}
Let $f: M \to M$ be a transitive endomorphism whose $\mathrm{int}(\mathrm{Cr}(f))$ is nonempty and $|\ker (Df^n)|=1$ for every $n\geq 1$. Then,
\begin{itemize}
\item either $f$ admits a dominated splitting; 
\item or $f$ can be approximated by endomorphisms having full-dimensional kernel.
\end{itemize}
\end{mainthm*}



Some comments related to the \hyperref[main-theo]{Main Theorem} are in order. Some similarities with the diffeomorphisms setting can be observed at this result. For diffeomorphisms, the dichotomy guarantees that either one has domination property or one can create, up to a perturbation, some obstructions (sinks and sources) for robust transitivity. In both setting, one starts proving the existence of a dominated splitting over a properly subset which can be extended to the whole manifold. The choices of obstructions for robust transitivity and the properly subset play a fundamental role in the proof; the choices are related in a way that allows us to define an invariant splitting over a properly subset which the absence of domination property (see
Definition~\ref{def-DS}) implies an obstruction for robust transitivity. Let us make a brief discussion about previous results.

\medskip

$\bullet$ For diffeomorphisms, the  used obstructions for robustly transitive  are the existence of \textit{sinks} or \textit{sources}\footnote{A sink (source) is a periodic point (e.g., $f^n(p)=p$) whose  linear map $L=Df^n_p$ has all the eigenvalues with modulus less (greater) than one. The existence of sinks (sources) implies the existence of a neighborhood $U$ of $p$ such that $f^n(\overline{U})\subseteq U \, (f^{-n}(\overline{U})\subseteq U$) which contradicts the transitivity.}; and the properly subset is the set of all periodic points in \cite{Mane-Closinglemma}, and it is the homoclinic class of a saddle point in \cite{DPU,BDP}.

\medskip

\begin{itemize}
\item[-] In \cite{Mane-Closinglemma}, 
the obstructions 
mentioned above implies that the periodic points of a robustly transitive diffeomorphism are hyperbolic. Then, one can define a ``natural" invariant splitting over the periodic points. 
It is shown that 
under the absence of domination property, sinks and sources can be created, 
 up to a perturbation, which is a contradiction. Therefore, the hyperbolic splitting over the periodic points is dominated and can be extended to the closure of the periodic points that generically is the whole surface.

\medskip

\item[-] In \cite{DPU} and \cite{BDP}, the properly subset is the homoclinic class of a hyperbolic saddle point. It has a ``natural" splitting given by the transversal intersection of the stable and unstable manifolds of such saddle. They prove that the absence of domination of such splitting implies  the existence of sinks and sources for a perturbation which is an obstruction for robust transitivity. Finally, they use classical results, such as Closing Lemma and Connecting Lemma, to extend the splitting to the whole manifold. 
\end{itemize}


$\bullet$ For non-invertible endomorphisms, observe that just sinks keep being obstructions for robustly transitive (e.g., expanding endomorphisms are robustly transitive and have sources). However, in the setting of the \hyperref[main-theo]{Main Theorem}, the alternative obstruction for robustly transitive, full-dimensional kernel allows us to define a properly subset as the set of all full orbits (see Section~\cref{section-ph}) which enter infinitely many times for the past/future in the set of the critical points. Concretely, 
\begin{align}\label{Lambda-set}\tag{$\ast$}
\Lambda_f=\left\{(x_i)_i \subseteq M_f \left| \begin{array}{ll}
 f(x_i)=x_{i+1}; \ \ \text{and} \ \ x_i \in \mathrm{Cr}(f)\ \
\text{for}\\ \text{infinitely many negative/positive} \ \ i \in \mathbb{Z}
\end{array} \right.\right\}.
\end{align}

In general, the authors do not know if this set is ``typically" nonempty. However, it will be shown in Section \cref{sec:main-thm} that if $f$ satisfies the hypothesis of \hyperref[main-theo]{Main Theorem} then $\Lambda_f$ is a dense subset, further details in Section \cref{construction-lambda}. More generally, in Section \cref{sec:main-thm-consequences} (see Lemma \ref{typically}), it will be shown that every robustly transitive endomorphism $f$ displaying critical points can be approximated by endomorphisms $g$
whose corresponding $\Lambda_g$ is dense,  where $\Lambda_g$ refers to the set  associated to $g$ given as in \eqref{Lambda-set}.
Hence, we first assume that $\Lambda_f$ is a nonempty set and define the following subbundles, 
\begin{align}\label{def:EF}\tag{$\ast \ast$}
E(x_j)=\ker\Bigl(Df^{^{\tau^{+}_j+1}}_{_{x_j}}\Bigl) \ \ \text{and} \ \ F(x_j)=\mathrm{Im}\Bigr(Df^{^{|\tau^{-}_j|}}_{_{x_{\tiny{\tau_j^{-}}}}}\Bigl),
\end{align}
where $\tau^{+}_j$ is the first time the orbit $(x_{j+i})_i$ enters in the set of critical points and $\tau^{-}_j$ is the first time before the orbit $(x_{j+i})_i$ leaves the set of critical points. In Section \cref{sec:main-thm}, we will prove that either $E\oplus F$ over $\Lambda_f$ is dominated in the sense of Definition \ref{def-DS}; or $f$ can be approximated by an endomorphism having full-dimensional kernel. Finally, we use the equivalence between Definition \ref{def-DS} and the definition of dominated splitting given
at the beginning of this section ([PH1]), see Section \cref{section-ph} for details, to conclude the proof of \hyperref[main-theo]{Main Theorem}. 

The novelty in our approach are the full-dimensional kernel obstruction for robustly transitive, the choice of $\Lambda_f$ and the construction of the splitting $E\oplus F$ over $\Lambda_f$. Furthermore, it will not be used any classical result such as Closing Lemma and Connecting Lemma. These are the most important difference between our approach and the approaches in \cite{Mane-Closinglemma,DPU} and \cite{BDP}. 

Finally, Section~\cref{sec:exp direction} is devoted to prove that the dominated splitting obtained in Section~\cref{sec:main-thm-consequences} is in fact partially hyperbolic, that is, the extremal dominating bundle is expanding, finishing the proof of Theorem \ref{thm-A}. 

\subsection{Sketch of the proof} The strategy for proving the results stated above is as follows.  Consider a robustly transitive endomorphism displaying critical points.
\begin{enumerate}
\item The existence of critical points and transitivity allows us to construct a ``properly subset" $\Lambda_f$ defined by \eqref{Lambda-set} (Section \cref{construction-lambda}). 

\item Observe that the kernel of the differential is at most one dimensional. If not, we are able to find a perturbation having an attracting periodic point, which contradicts the transitivity of nearby maps (Key Lemma).

\item Previous observation provide us a ``natural" candidate  for an invariant splitting over the critical points, since one of the subbundle is determined by the kernel of the differential over the critical points and the other subbundle is given by a ``transversal'' direction to the kernel. Then, this splitting is extended to the iterates and can be defined over the ``properly subset" as in \eqref{EF-splitting} (Section \cref{construction-lambda} and \cref{pf-key-thm}). 

\item For every nearby map to the initial holds that the angle between the invariant subbundles over the ``properly set" is uniform bounded away from zero. 
Our approach for proving it is a little bit different from the diffeomorphism setting approach in   \cite{Mane-Closinglemma,DPU, BDP}. For invertible maps, if the angle goes to zero, sinks and sources can be created, and both are obstructions for transitivity. However, in our setting sources are not an obstruction for transitivity, instead we use the fact of having full-dimensional kernel, contradicting that the initial map is robustly transitive (Section \cref{pf-key-thm}). 

\item Then,  the splitting is, in fact, a dominated splitting over the ``properly set". Furthermore, this dominated splitting is extended to the whole surface and it is an open property. The strategy for proving the domination property is to prove, first, that the domination property holds in a neighborhood of the critical points, second, it is extended to the ``properly set'', and then it holds for the whole surface. 
For this, roughly speaking, assuming that the domination property does not hold, then we can find a perturbation having full-dimensional kernel, contradicting the fact that the initial map is robustly transitive (Section \cref{uniform-DP} and \cref{sec:main-thm-consequences}).


\item Once we have  the existence of the dominated splitting, we get topological obstructions providing which surfaces support robustly transitive endomorphisms (Section \cref{sec:main-thm-consequences}).

\item Next, we prove that this splitting has a topological expanding behavior in the direction transversal to the kernel. Indeed, we show 
that the length of the iterates of arcs  tangent to the extremal dominate bundle grows exponentially
(Section \cref{homotopy}).

\item Finally, we show that the extremal dominate bundle is, in fact, an uniform expanding bundle getting that the initial map is partially hyperbolic as we wished (Section \cref{sec:exp direction}). For this,
 we assume by contradiction that the extremal dominate subbundle is not expanding. Then, the domination property implies that the dominated subbundle is contracting, and, up to a perturbation, there is a sink, contradicting that the initial map is  robustly transitive. 
\end{enumerate}

\section{Preliminaries}\label{section-ph}

This section is devoted to introduce the basic notions that will be used along this paper, such as the notion of partial hyperbolicity for endomorphisms displaying critical points in term of invariant splitting. Moreover, it will also be presented some fundamental properties for dominated splitting as uniqueness, continuity of the splitting, and the equivalence between the both notions of partial hyperbolicity for endomorphisms in terms of cone-field and invariant splitting. For the sake of clearness,  we prove that both definition are equivalent and then use indistinctly any of them according to the situation, it is important to recall that since we are in a non-invertible setting, we are dealing with several pre-images and has to be careful when we consider backward iterates.  The readers that are familiar with these notions can skip this section, and come back in case it is necessary.


\subsection{Dominated splitting and partial hyperbolicity for endomorphisms}\label{subsection-DS}

Due to the fact that an endomorphism might have several pre-images, it is appropriated to define a splitting over the space of the full orbits (or simply, orbit). Let us denote by $M^{\mathbb{Z}}$ the product space and, for every $f \in \mathrm{End}^1(M)$, define the space of all the orbits of $f$ by
\begin{align}
M_f=\{(x_i)_i \in M^{\mathbb{Z}}: f(x_i)=x_{i+1}, \forall i \in \mathbb{Z}\}.
\end{align}

Denote by $TM_f$ the vector bundle over $M_f$ defined as the pullback of $TM$ by the projection $\pi_0:M_f \to M$ defined by  $\pi_0((x_i)_i)=x_0$. In particular, it is well known that $TM_f$ and $TM$ are isomorphic. Unless specified, we use $T_{x}M$ to denote the fiber of $TM$ at $x \in M$ and the fiber of $TM_f$ at $(x_i)_i \in M_f$ where $x_0=x$. Furthermore, it should be noted that $f$ acts in $M_f$ as a shift map $(x_i)_i\mapsto (x_{i+1})_i$ and the derivative acts from $T_{x_i}M$ to $T_{x_{i+1}}M$. Then, we say that $\Lambda$ a subset  of $M_f$ is \textit{f-invariant} if it is invariant for the shift map. Before defining dominated splitting (in term of splitting) for endomorphisms displaying critical points, we recall the definition for local diffeomorphisms (or diffeomorphism).

\medskip

A $f$-invariant subset $\Lambda$ in $M_f$ is said to admit a dominated splitting for $f$ if there exist $\alpha>0,\, \ell \geq 1,$ and two one-dimensional bundles $E$ and $F$  such that for all $(x_i)_i \in \Lambda$ and $i \in \mathbb{Z}$ hold

\begin{itemize}
\item[-] \textit{Invariance splitting:}
\begin{align*}
& Df(E(x_i))= E(f(x_i)) \ \ \text{and} \ \ Df(F(x_i))=F(f(x_i));\\ & \text{and}, \ \  T_{x_i}M=E(x_i)\oplus F(x_i); 
\end{align*}

\smallskip

\item[-] \textit{Uniform angle:} $\varangle(E(x_i),F(x_i))\geq \alpha$;

\smallskip

\item[-] \textit{Domination Property:}
\begin{align*}
\|Df^{\ell} \mid_{E(x_i)}\|\leq \frac{1}{2}\|Df^{\ell}\mid_{F(x_i)}\|.
\end{align*}
\end{itemize}
$Df^{\ell}\mid_{E(x)}$ and $Df^{\ell}\mid_{F(x)}$ denote the restriction of $Df^{\ell}_x$ to the linear subspaces $E(x)$ and $F(x)$, respectively.

\begin{rmk}\label{rmk:angles}
\begin{itemize}
\item[]
\item[-] In this setting, it is known that as $T_xM=E(x)\oplus F(x)$, there exists a linear map $\phi_x:E(x) \to E^{\perp}(x)$ such that the graph of $\phi_x$ is the subspace $F(x)$. Then, we define the angle between $E(x)$ and $F(x)$ by
$$\|\phi_x\|=\tan \varangle(E(x),F(x)).$$

\smallskip

\item[-] Although we denote the subbundles by $E(x)$ and $F(x)$, we would like to emphasize that in general $E$ depends on the forward orbits $($i.e., $(x_i)_{i\geq 0}$ whose $x_0=x$$)$ and $F$ depends on the backward orbits $($i.e., $(x_i)_{i\leq 0}$ whose $x_0=x$$)$. 
\end{itemize}
\end{rmk}

However, when critical points appear, it is natural to think that the invariance property is affected. Indeed, let $T_xM=E(x)\oplus F(x)$ be satisfying the properties above. It should be noted that if $x \in \mathrm{Cr}(f)$ we must have that $E(x)$ is the kernel of $Df$ at $x$. Otherwise, either $Df_x(F(x))=\{0\}$ or $E(f(x))=F(f(x)),$ contradicting the definition above. Thus, a natural extension of the dominated splitting definition for endomorphisms displaying critical points is as follows.

\begin{defi}[Dominated splitting]\label{def-DS}
Let $\Lambda$ be a $f$-invariant subset of $M_f$. We say that $\Lambda$ admits a dominated splitting for $f$ if there exist $\alpha>0,\, \ell \geq 1,$ and two one-dimensional bundles $E$ and $F$  such that for every $(x_i)_i \in \Lambda$ and $i \in \mathbb{Z}$ hold
\begin{itemize}
\item[-] \textbf{Invariance splitting:} 
\begin{align*}
& Df(E(x_i))\subseteq  E(f(x_i)) \ \ \text{and} \ \ Df(F(x_i))=F(f(x_i));\\ & \text{and} \ \  T_xM=E(x_i)\oplus F(x_i);
\end{align*}

\smallskip

\item[-] \textbf{Uniform angle:} $\varangle(E(x_i),F(x_i))\geq \alpha$;

\smallskip

\item[-] \textbf{Domination Property:}
\begin{align}\label{eq-DS-property}
\|Df^{\ell} \mid_{E(x_i)}\|\leq \frac{1}{2}\|Df^{\ell}\mid_{F(x_i)}\|.
\end{align}
We denote the domination property by $E \prec_{\ell} F$.
\end{itemize}
\end{defi}

If $\Lambda=M_f$, we say $M_f$ admits a dominated splitting or simply $f$ has a dominated splitting. Unless specified, we use $T_{\Lambda}M=E\oplus F$ to denote the splitting over $\Lambda$ and for each point $(x_i)_i \in \Lambda$ the splitting $E\oplus F$ over the orbit $(x_i)_i$ means that
$$\bigsqcup_{i=-\infty}^{\infty} T_{x_i}M= \bigsqcup_{i=-\infty}^{\infty} E(x_i)\oplus F(x_i).$$

\begin{defi}[Partially hyperbolic]
We say that an endomorphism $f$ displaying critical points is partially hyperbolic if there exist two 
one-dimensional bundles $E$ and $F$ satisfying: 

\medskip

\begin{itemize}
\item[-]\textbf{Dominated splitting:} $TM_f=E\oplus F$ is a dominated splitting for $f$;

\medskip

\item[-]\textbf{Unstable direction:}  there exist $k \geq 1$ and $\lambda>1$ such that for all $(x_i) \in M_f$ and $i \in \mathbb{Z}$ hold that
\begin{align}\label{ph-eq}
\|Df^{k}\mid_{F(x_i)}\|\geq \lambda.
\end{align}
\end{itemize}
\end{defi}
It should be noted that, up to an iterated, we can assume that $k\geq 1$ in \eqref{ph-eq} and $\ell\geq 1$ in \eqref{eq-DS-property} are the same.

\subsection{Fundamental properties of dominated splitting}

Here we present some fundamental properties of dominated splitting.
The uniqueness of dominated splitting holds for endomorphisms without critical points, for details see \cite{Crovisier-Potrie}. Next proposition guarantees, even in the setting with critical points, the uniqueness of the splitting.

\begin{prop}[Uniqueness]\label{uniqueness} 
The dominated splitting $T_{\Lambda} M=E\oplus F$ is unique. That is, if $\Lambda$ admits two dominated splittings $E\oplus F$ and $G\oplus H$ for $f$, we must have $E(x_i)=G(x_i)$ and $F(x_i)=H(x_i)$ for all $(x_i)_i \in \Lambda$ and $i \in \mathbb{Z}$.
\end{prop}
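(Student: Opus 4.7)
I plan to adapt the classical argument for uniqueness of dominated splittings from the local-diffeomorphism setting (cf.\ \cite{Crovisier-Potrie}) to the endomorphism case. After replacing $\ell$ by a common multiple we may assume both splittings satisfy the domination condition with the same $\ell$. Fix $(x_i)_i\in\Lambda$ and $i\in\Z$; the goal is to prove $E(x_i)=G(x_i)$ and $F(x_i)=H(x_i)$.

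The first step is to identify the weak bundle intrinsically. Iterating the domination along the orbit yields $\|Df^{n\ell}|_{E(x_i)}\|\leq 2^{-n}\|Df^{n\ell}|_{F(x_i)}\|$, and combining the uniform angle $\varangle(E,F)\geq\alpha$ with $\dim F=1$ gives the operator-norm bound $\|Df^{n\ell}\|_{\mathrm{op}}\leq C_\alpha\|Df^{n\ell}|_{F(x_i)}\|$. Therefore
\[
E(x_i)\;=\;\Bigl\{\,v\in T_{x_i}M\;:\;\frac{\|Df^{n\ell}(v)\|}{\|Df^{n\ell}\|_{\mathrm{op}}}\xrightarrow[n\to\infty]{}0\,\Bigr\}\cup\{0\},
\]
an intrinsic description depending only on $Df$. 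The same description applies verbatim to the splitting $G\oplus H$, and one concludes $E(x_i)=G(x_i)$.

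The second step addresses the strong bundle. With $E=G$, both $F(x_i)$ and $H(x_i)$ are $1$-dimensional complements of $E(x_i)$. Assuming, for contradiction, that $F(x_i)\neq H(x_i)$, choose $v\in F(x_i)\setminus\{0\}$ and decompose $v=v_E+v_H$ in the $E\oplus H$ splitting: both components are nonzero since $v\notin E$ and $v\notin H$. The forward iterate $w_n:=Df^{n\ell}(v)$ lies in $F(x_{i+n\ell})$ while simultaneously decomposing as $Df^{n\ell}(v_E)+Df^{n\ell}(v_H)$ in $E(x_{i+n\ell})\oplus H(x_{i+n\ell})$; iterated domination $E\prec_\ell H$ together with $\dim H=1$ gives
\[
\frac{\|Df^{n\ell}(v_E)\|}{\|Df^{n\ell}(v_H)\|}\;\leq\;2^{-n}\frac{\|v_E\|}{\|v_H\|}.
\]
Combined with the uniform angle $\varangle(E,H)\geq\alpha$, this forces $\sin\varangle\bigl(F(x_{i+n\ell}),H(x_{i+n\ell})\bigr)\leq C\,2^{-n}$ to decay exponentially.

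To conclude $F(x_i)=H(x_i)$, the asymptotic equality must be transported back to the initial fiber. On orbit segments free of critical points, $Df^{n\ell}$ is a linear isomorphism of tangent planes, and the $2$-dimensional distortion identity $\|Tu\|\|Tv\|\sin\varangle(Tu,Tv)=|\det T|\,\|u\|\|v\|\sin\varangle(u,v)$, together with the determinant estimate following from the first step, controls the pullback. The main obstacle is the case when the forward orbit passes through a critical point $x_{i_0}$: there $Df_{x_{i_0}}$ has rank one with kernel equal to $E(x_{i_0})=G(x_{i_0})$, as noted in the paragraph preceding Definition~\ref{def-DS}, so the invariance $Df(F)=F$, $Df(H)=H$ forces $F(x_{i_0+1})=H(x_{i_0+1})$ because both must coincide with the $1$-dimensional image of $Df_{x_{i_0}}$. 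A careful backward propagation through the non-critical tail of the orbit, exploiting that $Df$ restricts to an isomorphism on the $1$-dimensional invariant bundles $F$ and $H$, then yields the equality at $x_i$ and closes the proof.
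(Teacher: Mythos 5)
Your identification of the weak bundle via forward decay, and your observation that a critical point $x_{i_0}$ on the orbit forces $F(x_{i_0+1})=H(x_{i_0+1})=\mathrm{Im}(Df_{x_{i_0}})$ and that this equality propagates backward across non-critical segments, are both correct and mirror the paper's Case~II. The gap is in your treatment of the strong bundle over an orbit segment free of critical points. You establish $\sin\varangle(F(x_{i+n\ell}),H(x_{i+n\ell}))\leq C\,2^{-n}$ and propose to transport this back via the determinant identity, but that identity is precisely what blocks the conclusion. Writing $T=Df^{n\ell}_{x_i}$ and taking unit vectors $u\in F(x_i)$, $v\in H(x_i)$, the identity reads
\[
\sin\varangle\bigl(F(x_i),H(x_i)\bigr)=\frac{\|T\mid_{F(x_i)}\|\,\|T\mid_{H(x_i)}\|}{|\det T|}\,\sin\varangle\bigl(F(x_{i+n\ell}),H(x_{i+n\ell})\bigr),
\]
and because the uniform angle $\varangle(E,F)\geq\alpha$ makes $|\det T|$ comparable to $\|T\mid_{E(x_i)}\|\,\|T\mid_{F(x_i)}\|$, the distortion prefactor is comparable to $\|T\mid_{H(x_i)}\|/\|T\mid_{E(x_i)}\|$, which grows at least like $2^{n}$ by domination. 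This exponential growth exactly cancels the exponential decay you proved, and the pullback merely reproduces the fixed (possibly nonzero) angle at $x_i$; no contradiction follows.

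This is not a repairable computation but the wrong mechanism: uniqueness of the strong bundle is a backward-iteration statement, dual to your forward argument for $E=G$. Take $w\in H(x_i)\setminus F(x_i)$, write $w=w_E+w_F$ in $E(x_i)\oplus F(x_i)$, and observe that over a backward segment free of critical points the inverse domination $\|Df^{-n\ell}\mid_{E(x_i)}\|\geq 2^{n}\|Df^{-n\ell}\mid_{F(x_i)}\|$ forces $Df^{-n\ell}(w)\in H(x_{i-n\ell})$ to collapse onto $E(x_{i-n\ell})$, contradicting $\varangle(E,H)\geq\alpha$. If the backward segment does meet a critical point, your kernel/image observation already closes the argument. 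These two mechanisms together cover all cases, and this split --- kernel/image at critical points, classical backward domination on invertible pieces --- is exactly how the paper organizes its proof, deferring the invertible piece to \cite{Crovisier-Potrie}.
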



\begin{proof}
For every $(x_i)_i$ in $\Lambda$, we can consider two cases. 

\medskip

\textit{Case I:}  the orbit $(x_i)_i$ such that $x_i \notin \mathrm{Cr}(f)$. 

\smallskip

In this situation, the proof follows from same arguments as in the invertible case and can be found in \cite{Crovisier-Potrie}.

\smallskip

\textit{Case II:} the orbit $(x_i)_i$ such that $x_i \in \mathrm{Cr}(f)$ for some $i \in \mathbb{Z}$. 

\smallskip

Here, without loss of generality, we can assume that $x_0 \in \mathrm{Cr}(f)$ and $x_i \notin \mathrm{Cr}(f)$ for every $i\neq 0$. Thus, one has that $E(x_0)=\ker(Df_{x_0})=G(x_0)$ and $Df_{x_i}$ is an isomorphism for $i\neq 0$ which imply $E(x_i)=G(x_i)$ for $i\leq 0$ and $F(x_i)=H(x_i)$ for $i \geq 1$. In particular, it should be noted that $T_{x_i}M$ admits two dominated splitting $E(x_i)\oplus F(x_i)$ and $E(x_i)\oplus H(x_i)$ for an invertible map $Df$ on $(x_i)_{i\leq 0}$ and $T_{x_i}M$ admits two dominated splitting $E(x_i)\oplus F(x_i)$ and $G(x_i)\oplus F(x_i)$ for the isomorphism $Df$ on $(x_i)_{i\geq 1}$. Therefore, repeating the same argument for the invertible case one can conclude that $E(x_i)=G(x_i)$ and $F(x_i)=H(x_i)$ for every $i \in \mathbb{Z}$.
\end{proof}

The following result shows the continuity of the dominated splitting. Moreover, it proves that a dominated splitting can be extended to the closure.

\begin{prop}[Continuity and extension to the closure]\label{continuity&extension}
The map 
\begin{align}\label{EF-map}
\Lambda \ni (x_i)_i \longmapsto \,  E(x_0)\oplus F(x_0)
\end{align}
is continuous. Moreover, it can be extended to the closure of $\Lambda$ continuously.
\end{prop}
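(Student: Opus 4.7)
The plan is to combine compactness of the Grassmannian with the uniqueness statement (Proposition~\ref{uniqueness}) to force any accumulation of the splitting at a base point to coincide with the splitting itself, and then use the same mechanism to extend over $\overline{\Lambda}$.

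For continuity, I would fix $(y_i)_i \in \Lambda$ and take an arbitrary sequence $(x^n_i)_i \in \Lambda$ converging to $(y_i)_i$ in the product topology of $M_f$. Since the Grassmannian of one-dimensional subspaces of $TM$ is compact, a diagonal extraction produces a subsequence along which $E(x^n_i) \to \widetilde E(y_i)$ and $F(x^n_i) \to \widetilde F(y_i)$ for every $i \in \mathbb{Z}$ simultaneously. I would then pass to the limit in the three axioms of Definition~\ref{def-DS} along the shift-orbit of $(y_i)_i$: continuity of $Df$ yields $Df(\widetilde E(y_i)) \subseteq \widetilde E(y_{i+1})$ and $Df(\widetilde F(y_i)) = \widetilde F(y_{i+1})$; since the angle function is continuous on the Grassmannian, the bound $\varangle(E(x^n_i),F(x^n_i)) \geq \alpha$ survives, giving simultaneously $T_{y_i}M = \widetilde E(y_i) \oplus \widetilde F(y_i)$ and $\varangle(\widetilde E(y_i), \widetilde F(y_i)) \geq \alpha$; continuity of $Df^{\ell}$ combined with uniform transversality preserves the domination inequality \eqref{eq-DS-property}. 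Thus $\widetilde E \oplus \widetilde F$ is a dominated splitting over the shift-orbit of $(y_i)_i$, an $f$-invariant subset of $\Lambda$. Proposition~\ref{uniqueness} then forces $\widetilde E(y_i) = E(y_i)$ and $\widetilde F(y_i) = F(y_i)$. Since every convergent subsequence has the same limit, the full sequence converges, and the map \eqref{EF-map} is continuous.

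For the extension to $\overline{\Lambda}$, I would repeat the argument for any $(y_i)_i \in \overline{\Lambda}$: pick any approximating sequence in $\Lambda$, extract via compactness candidate limit lines $\widetilde E(y_i)$ and $\widetilde F(y_i)$, and verify that the three axioms of Definition~\ref{def-DS} pass to the limit exactly as above. To check independence of the approximating sequence, I would note that $\overline{\Lambda}$ is itself $f$-invariant and that the collection of all such limit candidates forms a dominated splitting over $\overline{\Lambda}$; any two choices coming from two different sequences furnish two dominated splittings on the same $f$-invariant set $\overline{\Lambda}$, so Proposition~\ref{uniqueness} forces them to coincide. Continuity of the extended splitting on $\overline{\Lambda}$ is then obtained by running the first argument inside $\overline{\Lambda}$.

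The main obstacle is ruling out degeneration of the transversality in the limit when the orbit $(y_i)_i$ encounters a critical point. If some $y_{i_0} \in \mathrm{Cr}(f)$, the invariance $Df(\widetilde E(y_{i_0})) \subseteq \widetilde E(y_{i_0+1})$ together with $Df(\widetilde F(y_{i_0})) = \widetilde F(y_{i_0+1})$ would fail to be compatible with transversality at $y_{i_0+1}$ unless $\widetilde E(y_{i_0}) = \ker(Df_{y_{i_0}})$; but this is precisely what the uniform lower bound $\alpha > 0$ on the angle, inherited from $\Lambda$, automatically delivers in the limit. This uniform bound is what prevents the two limit subbundles from colliding and is the crucial mechanism that makes both continuity and the extension work without any further hypothesis.
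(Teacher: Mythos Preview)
Your proposal is correct and follows essentially the same route as the paper: both arguments use compactness (of the Grassmannian/unit sphere) to extract subsequential limits $\widetilde E,\widetilde F$, verify that these limits inherit the dominated splitting axioms by continuity of $Df$ and the uniform angle bound, and then invoke Proposition~\ref{uniqueness} to force agreement with the original splitting and to show the extension to $\overline{\Lambda}$ is well defined. Your write-up is more detailed (explicit diagonal extraction, the critical-point discussion), but the mechanism is identical.
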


\begin{proof}
Let $(x_i^n)_i \to (x_i)_i$ as $n \to \infty$ with $(x_i^n)_i$ and $(x_i)_i$ in $\Lambda$. Let $v_n$ and $w_n$ be unit vectors in $E(x_0^n)$ and $F(x_0^n)$, respectively. Up to a subsequence, we can assume that $v_i^n$ and $w_i^n$ converge to $v_i$ and $w_i$ such that the spanned spaces are $\widetilde{E}(x_i)$ and $\widetilde{F}(x_i)$, respectively. In particular, using that $f$ is a $C^1$-map and the angle between $E$ and $F$ is uniformly bounded away from zero, we obtain that $\widetilde{E}\oplus \widetilde{F}$ is a dominated splitting over $(x_i)_i$. Thus, by uniqueness of the existence of dominated splitting, we get that 
$\widetilde{E}(x_i)=E(x_i)$ and $\widetilde{F}(x_i)=F(x_i)$ which implies that the argument does not depend on the choice of the subsequence of $(v_i^n)_n$ and $(w_i^n)_n$. Therefore, the map in \eqref{EF-map} is continuous.
By the continuity of the map in \eqref{EF-map} and uniqueness of the dominated splitting, we can extend $E\oplus F$ to the closure of $\Lambda$ as the limits of $E$ and $F$.
\end{proof}


\begin{rmk}\label{proj-E}
It is well-known and follows from the proof of uniqueness that if  $E\oplus F$ is a dominated splitting, then the subbundle $E$ only depends on the forward orbits. That is, for every $(x_i)_i$ and $(y_i)_i$ in $\Lambda$, one has for every $i \geq 0$ that
 $$E(x_i)=E(y_i), \ \ \text{whenever} \ \ x_0=y_0.$$
In particular, by Proposition \ref{continuity&extension}, the subbundle induced by $E$ on $M$ is continuous which, by slight abuse of notation, we also denote by $E$.
\end{rmk}


The following is a characterization of partial hyperbolicity which will be useful for proving Theorem \ref{thm-A}. However, the proof is a standard argument.
\begin{prop} \label{ph-end}
Let $TM_f=E\oplus F$ be a dominated splitting for $f$. There exist $\ell_0\geq 1$ and $\lambda>1$ such that for every $(x_i)_i \in M_f$ with $x_0=x$, there is $1\leq k \leq {\ell}_0$ with $k$ depending of $(x_i)_i$ so that
\begin{align}
\|Df^k\mid_{F(x)}\|\geq \lambda
\end{align}
if, and only if, $f$ is a partially hyperbolic endomorphism.
\end{prop}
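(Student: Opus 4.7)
The plan is to handle the two implications separately. The ``if'' direction is immediate from the definition of partial hyperbolicity: if $f$ is partially hyperbolic with uniform iterate $k$ and rate $\lambda$, then setting $\ell_0 := k$ gives the pointwise hypothesis trivially (the same $k$ works for every orbit).

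The substantive content is the ``only if'' direction, where I must upgrade the orbit-dependent iterate $k = k((x_i)_i) \in \{1, \ldots, \ell_0\}$ to a uniform one. My strategy is the classical submultiplicative trick. Given an orbit $(x_i)_i$, I would recursively apply the hypothesis: choose $k_0 \leq \ell_0$ with $\|Df^{k_0}\mid_{F(x_0)}\| \geq \lambda$; then apply the hypothesis to the shifted orbit $(x_{k_0+i})_i$ to obtain $k_1 \leq \ell_0$ with $\|Df^{k_1}\mid_{F(x_{k_0})}\| \geq \lambda$; and iterate. Setting $T_n := k_0 + \cdots + k_{n-1}$, we have $n \leq T_n \leq n\ell_0$. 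Because $F$ is a one-dimensional invariant subbundle with $Df(F(x_i)) = F(x_{i+1})$, operator norms multiply exactly along the orbit, giving
\[
\|Df^{T_n}\mid_{F(x_0)}\| \;=\; \prod_{j=0}^{n-1}\|Df^{k_j}\mid_{F(x_{T_j})}\| \;\geq\; \lambda^n.
\]

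Next I would establish a uniform positive lower bound $C := \inf_{(x_i)_i \in M_f}\|Df_{x_0}\mid_{F(x_0)}\| > 0$; this is the step I expect to require the most care. The ingredients are: continuity of $F$ on the compact orbit space $M_f$, provided by Proposition \ref{continuity&extension}; the invariance $Df(F(x_i)) = F(x_{i+1})$ in Definition \ref{def-DS}, which forces $Df\mid_F \neq 0$ pointwise; and at critical points $x \in \mathrm{Cr}(f)$, the identity $E(x) = \ker(Df_x)$ (cf.\ the discussion preceding Definition \ref{def-DS}) combined with the uniform angle $\varangle(E,F) \geq \alpha$, which prevents $F(x)$ from entering the kernel. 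Compactness of $M_f$ then yields $C > 0$.

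To close, for any large integer $N$ and any orbit $(x_i)_i$, I would write $N = T_n + r$ with $0 \leq r < \ell_0$, so $n \geq (N-\ell_0)/\ell_0$, and estimate
\[
\|Df^N\mid_{F(x_0)}\| \;\geq\; \lambda^n \cdot \min(1, C^{\ell_0}) \;\geq\; \lambda^{(N-\ell_0)/\ell_0}\,\min(1, C^{\ell_0}).
\]
Choosing $N$ large enough that the right-hand side exceeds some fixed $\mu > 1$ independently of the orbit, one obtains the uniform expansion $\|Df^N\mid_{F(x_0)}\| \geq \mu$ required by the partial hyperbolicity definition (taking $k := N$). The main obstacle, as noted, is establishing the uniform positivity $C > 0$; the remainder is just the submultiplicative pigeonhole.
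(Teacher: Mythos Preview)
Your proposal is correct and follows essentially the same route as the paper: both use the one-dimensionality of $F$ to get exact multiplicativity of $\|Df\mid_F\|$ along orbits, apply the hypothesis recursively to obtain $\|Df^{T_n}\mid_{F(x_0)}\|\geq\lambda^n$, and control the leftover iterates via the uniform positive lower bound $C=\min\|Df^j\mid_{F}\|>0$ coming from continuity of $F$ on the compact space $M_f$. The paper's write-up is terser (it simply invokes the continuity of $(x_i)_i\mapsto F(x_0)$ to get $C$ and then writes $\ell=k\ell_0+r$), but the substance---including the point you flag as the ``main obstacle''---is identical.
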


\begin{proof} Since $M_f \ni (x_i)_i \mapsto F(x_0)$ is continuous, we can take $C=\min\{\|Df^j\mid_{F(x_0)}\|:1\leq j \leq \ell_0-1 \,\, \text{and} \,\, (x_i)_i \in M_f\}$ 
and write $\ell=k\ell_0+r,$ $0 \leq r \leq \ell_0-1$. Then we have  for $ r=1,\dots,\ell_0-1,$
\begin{align*}
\|Df^{\ell}\mid_{F(x_0)}\|\geq C\lambda^k.
\end{align*}
Hence, taking $k_0\geq 1$ such that $\lambda_0:=C\lambda^{k_0}>1$, we have that $\|Df^{\ell}\mid_{F(x_0)}\|\geq \lambda_0$. The reciprocal is clear.
\end{proof}



\subsection{Cone-criterion} 
Here, we present the equivalence between the definitions of dominated splitting and partial hyperbolicity in terms of cone-field and  invariant splitting. 

Let $E$ be a $Df$-invariant continuous subbundle of $TM$. For every $x \in M$, we define the following cone-field $\mathscr{C}_{E}$ on $M$ with core $E$ of length (angle) $\eta >0$,
\begin{align}
\mathscr{C}_{E}(x,\eta)=\{u_1+u_2 \in E(x)\oplus E(x)^{\perp}:\|u_2\|\leq \eta \|u_1\|\}.
\end{align}
By Remark \ref{rmk:angles} it can be rewritten as:
$$\mathscr{C}_{E}(x,\eta)=\{v \in E(x)\oplus E(x)^{\perp}:\varangle(E(x),\mathbb{R}\langle v\rangle)\leq \eta\},$$
where $\mathbb{R}\langle v\rangle$ denotes the subspace generated by $v$. The \textit{dual cone} of $\mathscr{C}_{E}(x,\eta)$ is the cone-field given by
\begin{align}
\mathscr{C}_{E}^{\ast}(x,\eta)=\{u_1+u_2 \in E(x)\oplus E(x)^{\perp}:\|u_1\|\leq \eta^{-1} \|u_2\|\}.
\end{align}
In other words, $\mathscr{C}_{E}^{\ast}(x,\eta)$ is the closure of $T_xM\backslash \mathscr{C}_{E}(x,\eta)$. The interior of $\mathscr{C}_{E}$ is given by $$\mathrm{int}(\mathscr{C}_{E}(x,\eta))=\{u_1+u_2 \in E(x)\oplus E(x)^{\perp}:\|u_1\| < \eta \|u_2\|\}\cup \{0\}.$$

By Remark \ref{proj-E}, the subbundle induced by $E$ of a dominated splitting $E\oplus F$ for $f$ over $M_f$ is  well-defined. Then, consider the cone-field $\mathscr{C}_E$ on $M$ and state the following equivalence between the definitions of dominated splitting in terms of cone-fields and invariant splitting.

\begin{prop}\label{cone-criterion}
$E\oplus F$ is a dominated splitting for $f$ if, and only if,
the cone-field $\mathscr{C}_E$ on $M$ is invariant and transversal to the kernel.
\end{prop}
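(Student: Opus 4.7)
The plan is to establish both directions of the equivalence by a standard cone-criterion argument adapted to the non-invertible setting with critical points.

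For the direction $(\Rightarrow)$, I assume $E\oplus F$ is a dominated splitting and use the continuous subbundle $E$ on $M$ provided by Remark~\ref{proj-E}. I would first handle transversality to the kernel: at any $x\in\mathrm{Cr}(f)$, the relation $Df(F(x))=F(f(x))$ together with $\dim F=1$ forces $\ker(Df_x)\subseteq E(x)$, and by $Df$-invariance of $E$ this extends to $\ker(Df^n_x)\subseteq E(x)$ for every $n\geq 1$; combined with the uniform angle $\varangle(E,F)\geq\alpha$, this gives the required transversality. For invariance, I would decompose an arbitrary $v\in T_xM$ along $E(x)\oplus F(x)$, apply $Df^\ell$ componentwise using the invariance of the splitting, and use the domination inequality $\|Df^\ell|_{E(x)}\|\leq\tfrac{1}{2}\|Df^\ell|_{F(x)}\|$ to show that the ratio between the $E$-component and the transverse component of $Df^\ell(v)$ contracts by at least a factor $\tfrac{1}{2}$, which---after passing to a possibly larger iterate---produces strict interior containment of the cone.

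For the direction $(\Leftarrow)$, starting from an invariant, kernel-transversal cone-field, I would construct the complementary one-dimensional bundle $F$ along each full orbit $(x_i)_i\in M_f$ by the nested-intersection formula
\[
F(x_0)=\bigcap_{n\geq 0}Df^n\bigl(\mathscr{C}_E(x_{-n},\eta)\bigr).
\]
The transversality to the kernel guarantees that each $Df^n(\mathscr{C}_E(x_{-n},\eta))$ remains a non-degenerate closed cone even when the backward orbit visits $\mathrm{Cr}(f)$; the cone invariance then produces a strictly nested sequence whose aperture contracts exponentially because of the strict interior containment, so the intersection is a single line. Continuity of $F$ on $M_f$ follows from compactness of the Grassmannian, $Df$-invariance is built into the construction, and complementarity $T_xM=E(x)\oplus F(x)$ follows from the transversality. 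The quantitative rate of cone contraction recorded by the invariance condition translates, after passing to a sufficient iterate, into the domination inequality $\|Df^\ell|_E\|\leq\tfrac{1}{2}\|Df^\ell|_F\|$ of Definition~\ref{def-DS}.

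The main obstacle lies in the $(\Leftarrow)$ direction and concerns the forward-iterated cone $Df^n(\mathscr{C}_E(x_{-n},\eta))$ when the backward orbit meets critical points. Without the transversality-to-kernel hypothesis this cone could collapse onto a proper subspace of $\mathrm{Im}(Df)$, destroying the one-dimensionality of the intersection; transversality is precisely what rules this out. A secondary subtlety is that, because $f$ is non-invertible, $F$ genuinely depends on the chosen backward orbit, so it is naturally defined on $M_f$ rather than on $M$, consistent with Definition~\ref{def-DS} and with Remark~\ref{rmk:angles}.
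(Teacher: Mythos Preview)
Your proposal is correct and follows the standard cone-criterion route. The paper's own proof is much terser: for $(\Rightarrow)$ it argues that domination forces $Df^k(E^\perp)$ to converge to $F$, hence the dual cone $\mathscr{C}_E^\ast$ is eventually mapped strictly inside itself (note the paper actually works with $\mathscr{C}_E^\ast$, not $\mathscr{C}_E$ as written in the statement---this is a notational slip, and your formulas should accordingly use the invariant dual cone in the nested intersection); for $(\Leftarrow)$ the paper simply refers to \cite[Section 2]{Crovisier-Potrie}. Your nested-intersection construction of $F$ along backward orbits in $M_f$, with transversality to the kernel preventing collapse of the iterated cones at critical points, is exactly what that reference does, so the two approaches coincide.
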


\begin{proof}
Since the angle between $E$ and $F$ is bounded away from zero, there is a number $\alpha >0$ verifying that for every $x \in M$, the direction $F$ is not contained in $\mathscr{C}_E(x,\alpha)$. In particular, we have that for every $(x_i)_i\in M_f, \, F(x_i) \subseteq \mathscr{C}_E^{\ast}(x_i,\alpha)$. The domination property implies that, for $k\geq 1$ large enough, the direction $Df^k(E^{\perp}(x_i))$ gets closer to $F(f^k(x_i))$ and $\|Df^k\mid_{E^{\perp}(x_i)}\|\approx \|Df^k\mid_{F(x_i)}\|$. Hence, one can fix $k\geq 1$ so that $Df^k(\mathscr{C}_E^{\ast}(x_i,\alpha)) \subseteq \mathrm{int}(\mathscr{C}_E^{\ast}(f^k(x_i),\alpha))$. This proves the necessary condition. The proof for the sufficient condition can be found in \cite[Section 2]{Crovisier-Potrie}.
\end{proof}

\begin{rmk}
It should be emphasized that the existence of a dominated splitting is an open property in the $C^1$ topology. That is, if $f$ admits an invariant cone-field $\mathscr{C}$ transversal to the kernel, then there exists a neighborhood $\mathcal{U}$ of $f$ in $\mathrm{End}^1(M)$ such that the cone-field $\mathscr{C}$ is invariant and transversal to the kernel for each $g \in \mathcal{U}$ as well.
\end{rmk}

The following result shows the equivalence between the definitions of partially hyperbolic endomorphisms in terms of cone-field and invariant splitting. The proof can be found in \cite{Crovisier-Potrie}.

\begin{prop}\label{ph-cone-criterion}
$f$ is a partially hyperbolic endomorphism if, and  only if,
there exists an unstable cone-field $\mathscr{C}^u$ on $M$. 
\end{prop}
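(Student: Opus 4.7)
The plan is to reduce both implications to Proposition~\ref{cone-criterion}, which already pairs ``dominated splitting'' with ``invariant cone-field transversal to the kernel''; only the extra expansion clause has to be added by hand in each direction.

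For the forward implication, assume $f$ is partially hyperbolic via the splitting $TM_f=E\oplus F$ with $\|Df^{k}\mid_{F(x_i)}\|\ge\lambda>1$. Since $E$ induces a continuous subbundle on $M$ (Remark~\ref{proj-E}), I would take as candidate unstable cone-field $\mathscr{C}^u:=\mathscr{C}_E^{\ast}(\cdot,\alpha)$ for $\alpha$ small; Proposition~\ref{cone-criterion} already yields forward-invariance and [PH1]. For [PH2], the domination $E\prec_{\ell}F$ forces iterates of $\mathscr{C}_E^{\ast}$ to collapse uniformly toward the $F$-direction, so that after $N$ applications of $Df^{\ell}$ every $v\in\mathscr{C}_E^{\ast}(x)$ is $\varepsilon$-close to $F(f^{N\ell}(x))$ and
\[
\|Df^{N\ell}(v)\|\,\ge\,(1-\varepsilon)\,\|Df^{N\ell}\mid_{F(x_{-N\ell+\cdot})}\|\cdot\|v\|.
\]
Combining this with uniform expansion of $F$ and compactness of $M$, I obtain uniform $\ell'\ge 1$ and $\lambda'>1$ such that $\|Df^{\ell'}(v)\|\ge\lambda'\|v\|$ for all $v\in\mathscr{C}^u$, which is [PH2].

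For the reverse implication, assume an unstable cone-field $\mathscr{C}^u$ exists. By [PH1] the cone is transversal to the kernel and by invariance Proposition~\ref{cone-criterion} produces a dominated splitting $E\oplus F$ on $M_f$, where concretely over a full orbit $(x_i)_i$ the strong bundle is given by the nested intersection
\[
F(x_0)=\bigcap_{n\ge 0} Df^{n}_{x_{-n}}(\mathscr{C}^u(x_{-n})),
\]
a single line lying in $\mathscr{C}^u(x_0)$ by the strict invariance of $\mathscr{C}^u$. Since every unit vector in $F(x)$ belongs to $\mathscr{C}^u(x)$, hypothesis [PH2] applies directly and yields $\|Df^{\ell}\mid_{F(x)}\|\ge\lambda$, which is precisely the extra expansion clause demanded by the splitting definition of partial hyperbolicity, and by Proposition~\ref{ph-end} this suffices.

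The main obstacle I anticipate is the forward direction, specifically the passage from the expansion of the (backward-orbit dependent) bundle $F$ defined on $M_f$ to a uniform lower bound on vectors of the cone $\mathscr{C}^u$ defined on the (forward) base $M$. The resolution is that the domination gap makes $\mathscr{C}^u$ collapse onto $F$ uniformly in all orbits, so the heuristic ``$v\in\mathscr{C}^u$ grows essentially as its $F$-shadow'' is independent of which backward orbit realizes $F$; continuity of $E$ on $M$ (Remark~\ref{proj-E}) combined with compactness converts this heuristic into the uniform bound required by [PH2]. The backward direction, by contrast, is essentially formal once Proposition~\ref{cone-criterion} provides the splitting, because $F$ is built inside the cone and thus inherits its expansion.
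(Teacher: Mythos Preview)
The paper does not supply its own proof of this proposition; it simply refers the reader to \cite{Crovisier-Potrie}. Your sketch follows the standard route one would find there: reduce each direction to Proposition~\ref{cone-criterion}, and handle the expansion clause separately. Both directions are essentially correct.

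A couple of minor comments. In the forward implication your displayed inequality with ``$F(x_{-N\ell+\cdot})$'' is notationally opaque; the cleaner way to phrase the argument is to fix any backward orbit through $x$, decompose $v\in\mathscr{C}_E^{\ast}(x,\alpha)$ as $v_E+v_F$ in $E(x)\oplus F(x)$, observe from the uniform angle bound that $\|v_F\|\ge c\|v\|$, and then use domination to absorb the $E$-component:
\[
\|Df^{n}(v)\|\ge \|Df^{n}\!\mid_{F(x)}\|\Bigl(c-\tfrac{\|Df^{n}\mid_{E(x)}\|}{\|Df^{n}\mid_{F(x)}\|}\,C\Bigr)\|v\|.
\]
For $n$ a large multiple of $\ell$ the bracket is bounded below by $c/2$, and the expansion of $F$ finishes. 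This makes it transparent that the estimate does not depend on the choice of backward orbit, which is the concern you (rightly) flag. In the reverse implication your identification of $F(x_0)$ as the nested intersection of pushed-forward cones along the backward orbit is exactly the standard construction, and since $F(x_0)\subseteq\mathscr{C}^u(x_0)$ the clause [PH2] transfers directly; the appeal to Proposition~\ref{ph-end} at the end is unnecessary but harmless.
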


\begin{rmk}
Partial hyperbolicity as well as dominated splitting is an open property. In other words, the existence of unstable cone-fields is a property shared by all nearby endomorphisms. Recalling that  $\mathscr{C}^u$ is an unstable cone-fields for $f$ if it
is invariant, transversal to the kernel and satisfy the unstable property, that is, there are $\ell, \lambda>1$ such that $ \|Df^{\ell}_x(v)\|\geq \lambda\|v\|,$  for all $x\in M$ and $v \in \mathscr{C}^u(x),$ see 
properties $[\mathrm{PH1}]$ and $[\mathrm{PH2}]$ in Section \cref{intro}.
\end{rmk}

\section{Proof of Main Theorem}\label{sec:main-thm}

In this section we prove the \hyperref[main-theo]{Main Theorem} stated in Section \Cref{intro}.
Since the existence of an invariant cone-field and a dominated splitting are equivalent, see Proposition \ref{cone-criterion} in Section \Cref{section-ph},
the \hyperref[main-theo]{Main Theorem} can be proved using the notion of dominated splitting in terms of invariant splitting. Before starting the proof let us construct precisely the invariant splitting over the set $\Lambda_f$ in \eqref{Lambda-set} that was previously defined
in Section \cref{intro}.


\subsection{Construction of the invariant splitting}\label{construction-lambda}
Let $f$ be a transitive endomorphism with $\mathrm{int}(\mathrm{Cr}(f))\neq \emptyset$ and $|\ker Df^n|= 1$ for every $n\geq 1$. Let $M_f$ be the inverse limit space for $f$, recalling that  $M_f=\{(x_i)_i \in M^{\mathbb{Z}}: f(x_i)=x_{i+1}, \, \forall i\in \mathbb{Z} \}$. It is known that the action of $f$ over $M_f$ (the shift map on $M_f$) is a transitive homeomorphism, for further details see \cite[Theorem 3.5.3]{AH},  and $\pi_0^{-1}(\mathrm{Cr}(f))$ has nonempty interior. Consequently, there is  a residual set of points in $M_f$ so that the backward/forward orbits are dense and intersect $\pi_0^{-1}(\mathrm{Cr}(f))$ infinitely many times for the past and for the future. Therefore, $\Lambda_f$ is a dense subset of $M_f$,  where  $\Lambda_f$ is defined by \eqref{Lambda-set}  in Section \cref{intro}.  From now on, we refer to this set as ``lambda-set" to avoid explicit reference to the endomorphism.


Let us recall  the candidate for the dominated splitting  for $f$ over $\Lambda_f$ introduced in Section~\cref{intro}. First, given $(x_i)_i \in \Lambda_f$, we denote by $\tau^{-}_{j}$ and $\tau^{+}_j,$ the first time before its orbit leave the set of critical points and the first time to enter in the set of critical points, respectively. More precisely, for every $(x_i)_i \in \Lambda_f$,
\begin{align}\label{n-past-future}
\begin{split}
\tau^{-}_{j}&=\max\{i<0: x_{j+i} \in \mathrm{Cr}(f) \,\, \text{and} \ \ f(x_{j+i}) \notin \mathrm{Cr}(f)\}; \ \  \text{and} \\
\tau^{+}_{j}&=\min\{i\geq 0:x_{j+i} \in \mathrm{Cr}(f)\}.
\end{split}
\end{align}
Thus, we define the subbundles of $TM_f$ over $\Lambda_f$ as:
\begin{align}\label{EF-splitting}\tag{$\ast \ast$}
E(x_j)=\ker \Big(Df^{^{\tau^{+}_j+1}}_{_{x_j}}\Big)
 \ \ \text{and} \ \ F(x_j)=
\mathrm{Im}\Big(Df^{^{|\tau^{-}_j|}}_{_{x_{\tiny{\tau^{-}_j}}}}\Big).
\end{align}
Since   $|\ker Df^n|= 1$ for all $n\geq 1$, we have that $E$ and $F$ are one-dimensional subbundles. Furthermore, it should be noted that if $x_j$ in $(x_i)_i$ is a critical point, then we have that $E(x_j)=\ker(Df_{x_j})$. In general, $E$ and $F$ at $x_j$ can be thought as the kernel of the $\tau^{+}_j$-iterated of $f$ where $\tau^{+}_j$ is the time that $f^i(x_j)$ takes to enter in $\mathrm{Cr}(f)$ and the image of the $|\tau^{-}_j|$-iterated of $f$ where $|\tau^{-}_j|$ is the time that $f^{-i}(x_j)$ 
takes to come back to $\mathrm{Cr}(f)$, respectively. 

\begin{figure}[!h]
\centering
\includegraphics[scale=0.7]{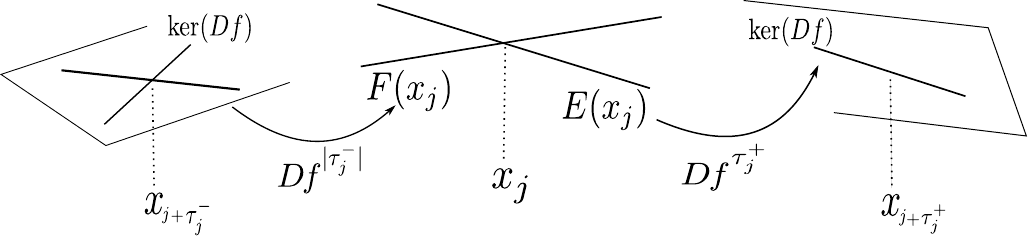}
\caption{$E,F$ subbundles over $\Lambda_f$.} \label{EF-splitting-picture}
\end{figure}


\begin{rmk}
It is interesting to notice that from the definition of $E(x_j)$ follows that it just depends on the forward orbit of $(x_{j+i})_{i\geq 0}$. However, $F(x_j)$ actually depends on the backward orbit of $(x_{j+i})_{i\leq 0}$.
\end{rmk}


We would like to emphasize that there is no assumption over the periodic points. The assumptions  $\mathrm{int}(\mathrm{Cr}(f))\neq \emptyset$ and $f$  transitive are to ensure that $\Lambda_f$ is a nonempty dense subset of $M_f$. Hence, we state the following result.

\begin{thm}\label{key-thm}
Let $f \in \mathrm{End}^1(M)$ be an endomorphism whose $|\ker(Df^n)|=1$ for every $n\geq 1$ and $\Lambda_f \neq \emptyset$. Then
\begin{itemize}
\item either $E\oplus F$ is a dominated splitting for $f$ over $\Lambda_f$;
\item or $f$ can be approximated by endomorphisms having full-dimensional kernel.
\end{itemize}
\end{thm}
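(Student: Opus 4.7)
The plan is to first verify the linear-algebraic properties of $E\oplus F$ (invariance, direct-sum decomposition) that fall out of the hypothesis $|\ker Df^n|=1$, and then to argue by contradiction: failure of either the uniform angle bound or the domination property will be upgraded, via Franks' Lemma, to an arbitrarily $C^1$-close endomorphism with full-dimensional kernel.

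For the algebraic setup, fix $(x_i)_i\in\Lambda_f$ and $j\in\mathbb{Z}$. Since $|\ker Df^n|=1$ for every $n$, the space $\ker(Df^{\tau^+_j+1}_{x_j})$ is one-dimensional, so $E(x_j)$ is a line; rank--nullity on a surface forces $\mathrm{Im}(Df^{|\tau^-_j|}_{x_{j-|\tau^-_j|}})$ to be one-dimensional as well, so $F(x_j)$ is a line. The invariances $Df(E(x_j))\subseteq E(x_{j+1})$ and $Df(F(x_j))=F(x_{j+1})$ read off the definitions of $\tau^{\pm}_j$ (handling the cases $\tau^{\pm}_j=0$ separately). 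The crucial observation driving everything else is
$$E(x_j)=F(x_j)\quad\Longrightarrow\quad Df^{|\tau^-_j|+\tau^+_j+1}_{x_{j-|\tau^-_j|}}\equiv 0,$$
because the image of the backward composition is $F(x_j)$ and is annihilated by the forward composition, whose kernel is $E(x_j)$. The standing hypothesis therefore forces $E(x_j)\neq F(x_j)$, so $T_{x_j}M=E(x_j)\oplus F(x_j)$. The contrapositive of this implication is the engine of the whole dichotomy: \emph{any} $C^1$-small $\tilde f$ whose associated bundles $\widetilde E,\widetilde F$ coincide at a single orbit point automatically has full-dimensional kernel, so it suffices to show that the failure of either required estimate allows one to realize such a collapse by a small perturbation built from Franks' Lemma.

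For the uniform angle, I assume $\inf_{(x_i)\in\Lambda_f}\varangle(E(x_0),F(x_0))=0$ and pick $(x_i^{(n)})_i\in\Lambda_f$ with $\theta_n:=\varangle(E(x_0^{(n)}),F(x_0^{(n)}))\to 0$. Writing $m=|\tau^-_0|$ and $k=\tau^+_0+1$ for this orbit, I use Franks' Lemma at $x_0^{(n)}$ alone to replace $Df_{x_0^{(n)}}$ by $Df_{x_0^{(n)}}\circ R_{\theta_n}$, where $R_{\theta_n}$ is the rotation of $T_{x_0^{(n)}}M$ carrying $F(x_0^{(n)})$ onto $E(x_0^{(n)})$. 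The backward orbit is untouched, so $\widetilde F(x_0^{(n)})=F(x_0^{(n)})$; on the forward side one computes $\widetilde E(x_0^{(n)})=R_{\theta_n}^{-1}(E(x_0^{(n)}))=F(x_0^{(n)})$. Thus $\widetilde E(x_0^{(n)})=\widetilde F(x_0^{(n)})$, and the engine above yields $D\tilde f^{m+k}_{x_{-m}^{(n)}}\equiv 0$, while the $C^1$-size of the perturbation is $O(\theta_n\|Df\|_\infty)\to 0$.

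For the domination property, I assume the uniform angle bound holds but $\|Df^\ell|_E\|\leq\tfrac12\|Df^\ell|_F\|$ fails for every $\ell\geq 1$. For each such $\ell$ I choose $(x_i)\in\Lambda_f$ and $j$ violating the inequality and, along the orbit segment between the two nearest visits to $\mathrm{Cr}(f)$ bracketing $j$, apply a Pliss-type selection to extract arbitrarily long sub-segments along which the ratio $\|Df|_E\|/\|Df|_F\|$ stays uniformly bounded below. Along such a segment I distribute small rotations, via Franks' Lemma at each iterate, whose cumulative effect rotates the forward-propagated $F$-direction onto $E$ at the terminal critical point; the lower bound on the ratio is what lets the individual rotations sum to a fixed angle of order one while each stays arbitrarily small in operator norm. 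The collapse $\widetilde E=\widetilde F$ at one point then yields full-dimensional kernel as before. This last step is the hard part: the bookkeeping that controls the total $C^1$-size while forcing an angular collapse of fixed size is delicate, and adapts the Pujals--Sambarino strategy from surface diffeomorphisms to the non-injective, critical-point setting. The algebraic setup and the angle step are essentially formal once the collapse mechanism is recognised.
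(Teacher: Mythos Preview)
Your setup and the uniform-angle step are essentially the same as the paper's: invariance and the direct-sum decomposition are read off from $|\ker Df^n|=1$, and a single small rotation at one point, realized via Franks' Lemma, collapses $F$ onto $E$ and produces full-dimensional kernel. That part is fine.

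The gap is in the domination step, and it is precisely the ``non-injective, critical-point'' issue you flag but do not resolve. The perturbation scheme you sketch (distribute small rotations along a long orbit segment so that their cumulative effect carries $F$ onto $E$) requires a uniform two-sided bound $\|Df_{x_i}\|,\|Df_{x_i}^{-1}\|\le K$ along the segment; this is exactly the hypothesis of the linear-algebra lemma the paper quotes (its Lemma~3.7, from Potrie's thesis). A domination-violating segment of length $\ell$ automatically avoids $\mathrm{Cr}(f)$ itself (since $Df^{\tau_0^+ +1}|_{E(x_0)}=0$ forces $\tau_0^+\ge\ell$), so your ``between the two nearest critical visits'' restriction is free, but it does \emph{not} prevent the orbit from coming arbitrarily close to $\mathrm{Cr}(f)$. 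Near such points the condition number of $Df$ blows up, and a highly eccentric $Df_{x_i}$ can compress or amplify a preceding rotation by an uncontrolled factor, so the individual rotations need no longer be small. Your Pliss selection gives at best a lower bound on cumulative products $\prod\|Df|_E\|/\|Df|_F\|$, not a pointwise lower bound on the step ratios, and hence does not bound the condition numbers.

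The paper's route around this is its Lemma~3.5: using the auxiliary singular-value splitting $V_g\oplus W_g$ on a fixed neighborhood $\mathrm U$ of $\mathrm{Cr}(f)$, it shows that $E_g\subset\mathscr C_{V_g}$ and $F_g\subset\mathscr C_{V_g}^{*}$ there, and from this deduces that any segment along which $\|Dg^j|_E\|\ge\tfrac12\|Dg^j|_F\|$ for all $1\le j\le l$ must stay entirely outside $\mathrm U$. Only then, with $\|Dg\|,\|Dg^{-1}\|$ uniformly bounded on $M\setminus\mathrm U$, does the distributed-rotation argument go through. This step---showing that domination failure forces the orbit away from a \emph{neighborhood} of the critical set---is the genuinely new ingredient relative to the diffeomorphism case, and it is missing from your proposal.
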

Assuming the previous theorem, we are able to prove  \hyperref[main-theo]{Main Theorem}.

\begin{proof}[Proof of  Main Theorem]
Let us assume that $f$ cannot be approximated by endomorphisms having full-dimensional kernel. That is, there exists a neighborhood $\mathcal{U}_f$ of $f$ satisfying
\begin{align}\label{eq-assumption}\tag{$\star$}
\forall g \in \mathcal{U}_f,\, n\geq 1, \ \ |\ker(Dg^n)|=\max_{x \in M}|\ker(Dg^n_x)|\leq 1.
\end{align}
Then, by Theorem \ref{key-thm}, we have that $T_{\Lambda_f}M_f=E\oplus F$ is a dominated splitting for $f$. Moreover, using that $\Lambda_f$ is dense in $M_f$ together with Proposition \ref{continuity&extension}, we can extend the dominated splitting to the whole $M_f$, establishing the proof of  \hyperref[main-theo]{Main Theorem}. 
\end{proof}




\subsection{Proof of Theorem \ref{key-thm}} \label{pf-key-thm}
We start proving the invariance splitting property. 


\begin{lemma}[Invariance splitting]\label{invariance}
For every $(x_i)_i \in \Lambda_f$ holds,
\begin{align}\label{EF-invariant}\tag{\,I\,}
\begin{split}
& Df(E(x_i))\subseteq  E(x_{i+1}) \ \ \text{and} \ \ Df(F(x_i))=F(x_{i+1});\\
& \text{and} \ \  \bigsqcup_{i=-\infty}^{\infty} T_{x_i}M=\bigsqcup_{i=-\infty}^{\infty} E(x_i)\oplus F(x_i).
\end{split}
\end{align}
\end{lemma}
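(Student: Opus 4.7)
The plan is to establish all three claims by combining the chain rule with a careful bookkeeping of how $\tau^{\pm}_i$ shift as $i$ advances, invoking the standing hypothesis $|\ker Df^n|=1$ every time a dimension threatens to collapse. As a preliminary observation, this hypothesis forces both $E(x_j)$ and $F(x_j)$ to be exactly one-dimensional: the kernel by definition, and the image because the two-dimensional domain $T_{x_{j+\tau^-_j}}M$ has one-dimensional kernel under $Df^{|\tau^-_j|}$ and hence one-dimensional image.

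For the inclusion $Df_{x_i}(E(x_i))\subseteq E(x_{i+1})$ I will use the factorization
\begin{equation*}
Df^{\tau^+_i+1}_{x_i}=Df^{\tau^+_i}_{x_{i+1}}\circ Df_{x_i},
\end{equation*}
which immediately sends any $v\in E(x_i)$ into $\ker Df^{\tau^+_i}_{x_{i+1}}$. When $x_i\notin\mathrm{Cr}(f)$ one has $\tau^+_i\geq 1$ and $\tau^+_{i+1}=\tau^+_i-1$, so this kernel coincides with $E(x_{i+1})$ and the inclusion is in fact an equality; when $x_i\in\mathrm{Cr}(f)$ one has $\tau^+_i=0$ and $Df_{x_i}(E(x_i))=\{0\}\subseteq E(x_{i+1})$ trivially.

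For $Df_{x_i}(F(x_i))=F(x_{i+1})$ I will split cases according to whether the witness critical point defining $\tau^-_i$ is preserved. If $x_i\notin\mathrm{Cr}(f)$, or if $\{x_i,x_{i+1}\}\subseteq\mathrm{Cr}(f)$, then a direct inspection of the ``max'' in \eqref{n-past-future} shows the witness is unchanged, $\tau^-_{i+1}=\tau^-_i-1$, and so
\begin{equation*}
F(x_{i+1})=\mathrm{Im}\,Df^{|\tau^-_i|+1}_{x_{i+\tau^-_i}}=Df_{x_i}\bigl(\mathrm{Im}\,Df^{|\tau^-_i|}_{x_{i+\tau^-_i}}\bigr)=Df_{x_i}(F(x_i)).
\end{equation*}
The delicate subcase is $x_i\in\mathrm{Cr}(f)$ and $x_{i+1}\notin\mathrm{Cr}(f)$, where the witness jumps to $x_i$ itself, $\tau^-_{i+1}=-1$, and $F(x_{i+1})=Df_{x_i}(T_{x_i}M)$ is one-dimensional. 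The desired equality $Df_{x_i}(F(x_i))=F(x_{i+1})$ then reduces to showing $F(x_i)\not\subseteq\ker Df_{x_i}$; otherwise these two one-dimensional lines would coincide and make $Df^{|\tau^-_i|+1}_{x_{i+\tau^-_i}}$ vanish on all of $T_{x_{i+\tau^-_i}}M$, contradicting $|\ker Df^n|\leq 1$.

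The direct sum $T_{x_i}M=E(x_i)\oplus F(x_i)$ is the last item and reduces, for dimension reasons, to $E(x_i)\cap F(x_i)=\{0\}$. If the intersection were non-trivial then $F(x_i)=E(x_i)\subseteq\ker Df^{\tau^+_i+1}_{x_i}$, and composing with the definition of $F$ would force $Df^{\tau^+_i+1+|\tau^-_i|}_{x_{i+\tau^-_i}}$ to vanish identically on the two-dimensional tangent space, once more in conflict with $|\ker Df^n|\leq 1$. The hardest part of the plan will be the delicate subcase of the $F$-invariance and the direct sum statement, both of which hinge on converting a hypothetical two-dimensional annihilation into a contradiction with the global one-dimensional-kernel assumption; all the remaining cases are essentially chain-rule bookkeeping.
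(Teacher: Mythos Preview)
Your proof is correct and follows the same approach as the paper's---using the definitions of $E,F$ and tracking how $\tau^{\pm}_j$ shift under iteration---but you carry out the case analysis far more carefully than the paper's one-line proof, which simply asserts ``$\tau^\pm_{j+1}=\tau^\pm_j-1$'' without treating the boundary cases where the orbit enters or exits $\mathrm{Cr}(f)$. In particular, your handling of the subcase $x_i\in\mathrm{Cr}(f),\,x_{i+1}\notin\mathrm{Cr}(f)$ for the $F$-invariance, and of the direct-sum statement via the two-dimensional-kernel contradiction, are exactly the details the paper leaves implicit.
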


\begin{proof}
The proof follows immediately from the definition of $E$ and $F$ together with the fact that $Df_{x_j}$ is an isomorphism for $x_j \notin \mathrm{Cr}(f)$ and $\tau^\pm_{j+1}=\tau^{\pm}_j-1$.
\end{proof}

We would like to emphasize that the Lemma \ref{invariance} (Invariance splitting) above holds for any endomorphism $g \in \mathrm{End}^1(M)$ whose $|\ker(Dg^n)|=1$ for all $n \geq 1$ and $\Lambda_g$ the  nonempty set associated to $g$ given as in \eqref{Lambda-set}. 

\smallskip

Before proceeding, let us recall the following classical tool in $C^1$-perturbative arguments due to Franks in \cite{Franks}.

\begin{lemma*}[Franks]\label{Franks-lemma}
Let $f \in \mathrm{End}^1(M)$. Given a neighborhood $\mathcal{U}$ of $f$, there exist $\varepsilon >0$ and a neighborhood $\mathcal{U}'$ of $f$ contained in $\mathcal{U}$ so that given $g  \in \mathcal{U}'$, every finite collection of points $\Sigma=\{x_0,...,x_n\}$ in $M$, and every family of linear maps $L_i:T_{x_i}M \to T_{g(x_i)}M$ which $\|L_i-Dg_{x_i}\|<\varepsilon$, for each $0\leq i \leq n$, there exists an endomorphism $\tilde{g} \in \mathcal{U}$ and a family of balls $B_i$ centered in $x_i$ contained in a neighborhood $B$ of $\Sigma$ satisfying:
$\tilde{g}(x)=g(x)$, for $x \in M\backslash B$; $\tilde{g}(x_i)=g(x_i)$, and $\tilde{g}\mid_{B_i}=L_i$, for each $0\leq i \leq n$.
\end{lemma*}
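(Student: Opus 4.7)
\emph{Proof proposal for Franks' Lemma.} The plan is to construct $\tilde g$ by localized, chart-based perturbations of $g$ at each point $x_i$, interpolating between the unperturbed derivative $Dg_{x_i}$ and the prescribed $L_i$ by means of a smooth cutoff. Set $A_i := L_i - Dg_{x_i}$; by hypothesis $\|A_i\| < \varepsilon$. First I would shrink the target neighborhood $\mathcal{U}$ (and thereby the auxiliary $\varepsilon$ and $\mathcal{U}'$) so that uniform lower bounds on the injectivity radius and uniform upper bounds on the metric distortion of the exponentials hold across all $g \in \mathcal{U}'$. Then, given $g \in \mathcal{U}'$ and $\Sigma = \{x_0,\dots,x_n\}$, I would select pairwise disjoint closed balls $B_i$ around $x_i$ contained in $B$, small enough that $\exp_{x_i}$ (at the source) and $\exp_{g(x_i)}$ (at the target) are diffeomorphisms on them.

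The central step is the explicit construction. In the local coordinates provided by these exponentials, $g$ is represented by a map $\hat g(v) = Dg_{x_i} v + R(v)$ with $R(0)=0$ and $DR(0)=0$. Fix once and for all a smooth bump $\phi\colon [0,\infty)\to[0,1]$ with $\phi\equiv 1$ on $[0,1/2]$, $\phi\equiv 0$ on $[1,\infty)$, and $|\phi'|\le K$. For a radius $r_i>0$ to be chosen, define
\[
\tilde g_i(v) \;:=\; \hat g(v) + \phi(\|v\|/r_i)\,A_i v.
\]
Then $\tilde g_i(0)=0$, $D\tilde g_i(0) = Dg_{x_i}+A_i = L_i$, and $\tilde g_i \equiv \hat g$ for $\|v\|\ge r_i$. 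Transferring back to $M$ via the charts and setting $\tilde g := g$ on $M\setminus\bigcup_i B_i$ yields a globally defined $C^1$-map that equals $g$ outside $B$, satisfies $\tilde g(x_i)=g(x_i)$ by construction, and coincides with an affine representative of $L_i$ on the smaller ball $B_i'\subset B_i$ corresponding to $\{\|v\|\le r_i/2\}$, since $\phi\equiv 1$ there.

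The verification that $\tilde g \in \mathcal{U}$ reduces to a $C^1$-estimate in the chart: a direct computation gives
\[
\|\tilde g_i - \hat g\|_{C^0} \le r_i\|A_i\|,\qquad \|D\tilde g_i - D\hat g\|_{C^0} \le (1+K)\|A_i\|,
\]
so the $C^1$-size of the perturbation is at most $(1+K)\varepsilon$, independent of the radii $r_i$. The principal obstacle I expect is bookkeeping rather than substance: one must (i) translate these chart estimates into estimates for the intrinsic $C^1$-distance on $M$, controlling the distortion introduced by the exponential maps, and (ii) calibrate constants so that $(1+K)\varepsilon$ falls within the prescribed $C^1$-size of $\mathcal{U}$. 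Both are handled by the preliminary shrinking described in the first paragraph, after which the resulting $\tilde g$ lies in $\mathcal{U}$ and satisfies $\tilde g(x)=g(x)$ on $M\setminus B$, $\tilde g(x_i)=g(x_i)$, and $\tilde g\mid_{B_i'} = L_i$, as required.
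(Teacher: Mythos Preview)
The paper does not prove this lemma; it simply recalls it as a classical perturbation tool and cites Franks. So there is no ``paper's proof'' to compare against, and your outline is the standard one. However, your construction as written does not actually deliver the conclusion $\tilde g\mid_{B_i}=L_i$ stated in the lemma (and explicitly clarified by the paper: ``the action of $\tilde g$ in $B_i$ is equal to the linear map $L_i$'').

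Concretely: in your chart you write $\hat g(v)=Dg_{x_i}v+R(v)$ and set
\[
\tilde g_i(v)=\hat g(v)+\phi(\|v\|/r_i)\,A_i v.
\]
On the inner ball $\{\|v\|\le r_i/2\}$ where $\phi\equiv 1$ this gives $\tilde g_i(v)=L_i v+R(v)$, not $L_i v$. You have only arranged $D\tilde g_i(0)=L_i$; the nonlinear remainder $R$ survives on the whole ball, so $\tilde g$ is not the linear map $L_i$ there. The fix is to interpolate $\hat g$ with the linear map itself, e.g.
\[
\tilde g_i(v)=L_i v+\bigl(1-\phi(\|v\|/r_i)\bigr)\bigl(\hat g(v)-L_i v\bigr)=L_i v+\bigl(1-\phi(\|v\|/r_i)\bigr)\bigl(R(v)-A_i v\bigr),
\]
which equals $L_i v$ on the inner ball and $\hat g$ outside. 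The $C^1$-estimate then picks up an extra contribution from $R$ and $DR$; since $R(0)=0$ and $DR(0)=0$, these are $o(1)$ as $r_i\to 0$, so one obtains a bound of the form $(1+K)\|A_i\|+o(1)$ and must also choose the radii $r_i$ small (not just the balls disjoint) to absorb the remainder. With that correction your argument goes through.
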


Recalling that $\tilde{g}\mid_{B_i}=L_i$ means that the action of $\tilde{g}$ in $B_i$ is equal to the linear map $L_i$. 



The following result will be an important tool along this paper and will be used several times. Before state our assertion, let us denote by $E_g$ and $F_g$
the subbundles associated to $g$.

\begin{lemma}\label{Franks-conseq} 
For every neighborhood $\mathcal{U}$ of $f$, there exist $\varepsilon>0$ and a neighborhood $\mathcal{U}' \subseteq \mathcal{U}$ of $f$ such that for any $g \in \mathcal{U}'$ whose $\Lambda_g \neq \emptyset$ one has that if for some $(x_i)_i \in \Lambda_g$, there exist a subset $\Gamma=\{x_0,...,x_{n-1}\}$ and $L_i: T_{x_{i-1}}M \to T_{x_i}M$ linear maps satisfying:
$$\|L_i-Dg_{x_{i-1}}\| <\varepsilon, \ \ \text{for each} \ \ 1\leq i \leq n; \ \ \text{and} \ \  L_n \cdots L_1(F_g(x_0))=E_g(x_n).$$
Then, there exist $\tilde{g} \in \mathcal{U}$ and a neighborhood $W$ of $\Gamma$ such that:
\begin{enumerate}[label=$(\roman*)$]
\item $\tilde{g}(x_{i-1})=x_i$ and $D\tilde{g}_{x_{i-1}}=L_i$, for each $1\leq i \leq n$; and $\tilde{g}\mid_{W^c}=g\mid_{W^c}$;
\item there exists $m \geq 1$ such that $|\ker(D\tilde{g}^m)|=2$.
\end{enumerate}
\end{lemma}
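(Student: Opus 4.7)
The plan is to invoke \hyperref[Franks-lemma]{Franks' Lemma} to realize the prescribed linearizations at the points of $\Gamma$, and then to detect at a single well-chosen base point that the derivative of the corresponding iterate of $\tilde g$ collapses the entire tangent plane to zero. The collapse is achieved by chaining three ingredients: the rank drop at a critical ancestor of $x_0$, the prescribed identification $L_n\cdots L_1(F_g(x_0))=E_g(x_n)$, and the rank drop at the first critical descendant of $x_n$.

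First, I apply \hyperref[Franks-lemma]{Franks' Lemma} to the finite collection $\Sigma=\{x_0,\ldots,x_{n-1}\}$ with linearizations $L_1,\ldots,L_n$. The smallness hypothesis $\|L_i-Dg_{x_{i-1}}\|<\varepsilon$ is matched to the $\varepsilon$ supplied by Franks' Lemma, which is uniform on a neighborhood $\mathcal U'$ of $f$. This produces $\tilde g\in\mathcal U$ that agrees with $g$ outside a neighborhood $W$ of $\Gamma$ and satisfies $\tilde g(x_{i-1})=x_i$, $D\tilde g_{x_{i-1}}=L_i$ for $1\le i\le n$; this yields item~(i). Since $\tau^-_0$ and $\tau^+_n$ are finite (because $(x_i)_i\in\Lambda_g$), I shrink $W$ so that the auxiliary segments $\{x_{\tau^-_0},\ldots,x_{-1}\}$ and $\{x_n,\ldots,x_{n+\tau^+_n}\}$ lie in $W^c$, which forces $\tilde g$ to act exactly like $g$ along these pieces of the orbit.

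For item~(ii), set $y:=x_{\tau^-_0}$, which is a critical point of $g$ and hence of $\tilde g$, and $m:=|\tau^-_0|+n+\tau^+_n+1$. The localization of the perturbation combined with the chain rule gives
\begin{equation*}
D\tilde g^m_{y}\;=\;Dg^{\tau^+_n+1}_{x_n}\,\circ\,(L_n\cdots L_1)\,\circ\,Dg^{|\tau^-_0|}_{y}.
\end{equation*}
The rightmost factor has one-dimensional image equal to $F_g(x_0)$ by the definition of $F_g$ in \eqref{EF-splitting}; the middle factor $L_n\cdots L_1=D\tilde g^n_{x_0}$ sends this image onto $E_g(x_n)$ by hypothesis; and the leftmost factor annihilates $E_g(x_n)$ by the definition of $E_g$ in \eqref{EF-splitting}. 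Therefore $D\tilde g^m_{y}$ is the zero map, so $|\ker(D\tilde g^m_y)|=\dim M=2$, establishing~(ii).

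The delicate point is the localization of $W$: to carry out the chain-rule factorization above, the backward segment $y,\ldots,x_{-1}$ and the forward segment $x_n,\ldots,x_{n+\tau^+_n}$ must remain untouched by the perturbation. This is immediate when these finitely many auxiliary points are distinct from $\Gamma$; if coincidences occur, one replaces the single neighborhood $W$ by a union of sufficiently small disjoint balls $B_0\cup\cdots\cup B_{n-1}$ centered at the points of $\Gamma$, as allowed by Franks' Lemma, at the cost of shrinking $\varepsilon$ and $\mathcal U'$. This does not affect the factorization, since outside those balls $\tilde g$ coincides with $g$, and inside them the prescribed derivatives $L_i$ are what the argument requires.
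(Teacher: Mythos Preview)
Your proof is correct and follows essentially the same approach as the paper: apply Franks' Lemma to realize the prescribed derivatives, localize the perturbation away from the auxiliary orbit segments, and then factor $D\tilde g^{m}$ at the critical ancestor $x_{\tau^-_0}$ as a composition that collapses the tangent plane to zero. The only cosmetic difference is that the paper first reduces (``without loss of generality'') to the case $n\le \tau^+_0$ and uses $m=1+|\tau^-_0|+\tau^+_0$, whereas you work directly with $\tau^+_n$ and $m=|\tau^-_0|+n+\tau^+_n+1$; when $n\le\tau^+_0$ these agree since $\tau^+_n=\tau^+_0-n$, and your formulation avoids the need for the reduction.
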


\begin{proof} 
The existence of $\tilde{g}$ is guaranteed by \hyperref[Franks-lemma]{Franks' Lemma}. Assume, without loss of generality, that $\tau^{-}_0<n \leq \tau^{+}_0$ where $x_{\tau^{-}_0}$ and $x_{\tau^{+}_0}$ belong to $\mathrm{Cr}(g)$ and $$W\cap\{x_{\tau^{-}_0},\dots,x_0,\dots,x_{n-1},\dots,x_{\tau^{+}_0}\}=\Gamma.$$
Then, taking $m=1+|\tau^{-}_0|+\tau^{+}_0$, one has
\begin{align*}
D\tilde{g}^m(T_{x_{\tau^{-}_0}}M)&=Dg^{1+\tau^{+}_0-n} L_n \cdots L_1 Dg^{|\tau^{-}_0|}(F(x_{\tau^{-}_0}))\\
&=Dg^{1+\tau^{+}_0-n} L_n \cdots   L_1(F(x_0))=Dg^{1+\tau^{+}_0-n}(E(x_n))=\{0\}.
\end{align*}
In other words, we obtain that $|\ker(D\tilde{f}^m)|=2$.
\end{proof}

It remains to prove that the angle between $E$ and $F$ is uniformly bounded away from zero and there exists $\ell\geq 1$ so that $E \prec_{\ell} F$, recall Definition \ref{def-DS}. In order to prove it, we assume that $f$ cannot be approximated by endomorphisms having full-dimensional kernel, 
that is, there exists a neighborhood $\mathcal{U}_f$ of $f$ satisfying \eqref{eq-assumption}.

Throughout this section, we fix $\varepsilon>0$ and a neighborhood $\mathcal{U}$ of $f$ contained in $\mathcal{U}_f$ such that the hypotheses of
Lemma \ref{Franks-conseq}  are never satisfied.
Further, let us fix an angle $\alpha>0$ small enough such that any rotation map $R$ of angle less than $\alpha$  verifies  that 
$\|R \circ Dg - Dg\|<\varepsilon$, for all $g \in \mathcal{U}$. By slight abuse of notation, 
we continue denoting $\mathcal{U}$ by $\mathcal{U}_f$. 



The following lemma ensures that the angle between $E$ and $F$ is uniformly bounded away from zero.

\begin{lemma}[Uniform angle]\label{unif-angle}
For every $g \in \mathcal{U}_f$ with $\Lambda_g \neq \emptyset$, we have for every $(x_i)_i \in \Lambda_g$ that
\begin{align}\tag{II}
\varangle (E_g(x_i),F_g(x_i)) \geq \alpha,\ \ \text{for all} \ \ i \in \mathbb{Z}.
\end{align}
\end{lemma}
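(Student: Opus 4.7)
The strategy is by contradiction, using \hyperref[Franks-lemma]{Franks' Lemma} in the form already packaged as Lemma \ref{Franks-conseq}: if the angle between $E_g$ and $F_g$ ever falls below $\alpha$ along some orbit in $\Lambda_g$, then a single small rotational perturbation realigns $F_g$ onto $E_g$, and Lemma \ref{Franks-conseq} converts this into a nearby endomorphism with full-dimensional kernel, which is forbidden by the defining property \eqref{eq-assumption} of $\mathcal{U}_f$. This mirrors the classical ``close a sink'' argument of \cite{Mane-Closinglemma,BDP}, with full-dimensional kernel replacing the role played by sinks in the diffeomorphism setting.

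More concretely, suppose for contradiction that there exist $g \in \mathcal{U}_f$, an orbit $(x_i)_i \in \Lambda_g$ and some index $j \in \mathbb{Z}$ such that $\varangle(E_g(x_j), F_g(x_j)) < \alpha$. By the invariance property established in Lemma \ref{invariance}, $Dg_{x_{j-1}}(F_g(x_{j-1})) = F_g(x_j)$. Since $T_{x_j}M$ is two-dimensional and the unoriented angle between the lines $E_g(x_j)$ and $F_g(x_j)$ is less than $\alpha$, there exists a rotation $R : T_{x_j}M \to T_{x_j}M$ of angle strictly less than $\alpha$ sending $F_g(x_j)$ onto $E_g(x_j)$.

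Define the linear map $L_1 := R \circ Dg_{x_{j-1}} : T_{x_{j-1}}M \to T_{x_j}M$. By the choice of $\alpha$ made just before the statement, $\|L_1 - Dg_{x_{j-1}}\| < \varepsilon$, and by construction
\[
L_1\bigl(F_g(x_{j-1})\bigr) \;=\; R\bigl(Dg_{x_{j-1}}(F_g(x_{j-1}))\bigr) \;=\; R\bigl(F_g(x_j)\bigr) \;=\; E_g(x_j).
\]
Thus, relabelling $x_{j-1}$ as $x_0$ and $x_j$ as $x_1$, the hypothesis of Lemma \ref{Franks-conseq} is satisfied with $\Gamma = \{x_0\}$ and $n = 1$. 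Lemma \ref{Franks-conseq} then yields $\tilde{g} \in \mathcal{U} \subseteq \mathcal{U}_f$ and an integer $m \geq 1$ with $|\ker(D\tilde{g}^m)| = 2$, contradicting \eqref{eq-assumption}. Hence no such $(x_i)_i$ and $j$ exist, and the desired lower bound $\varangle(E_g(x_i), F_g(x_i)) \geq \alpha$ holds throughout $\Lambda_g$.

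The only step requiring genuine care is verifying that the rotation $R$ realizing the hypothesis of Lemma \ref{Franks-conseq} can indeed be chosen with small norm perturbation: this rests crucially on $\dim M = 2$, where any two lines can be aligned by a rotation whose angle equals their unoriented separation, and on the a priori bound $\|R \circ Dg - Dg\| < \varepsilon$ built into the choice of $\alpha$ via the uniform $C^1$-bound on maps in $\mathcal{U}$.
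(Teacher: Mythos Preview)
Your proof is correct and follows essentially the same approach as the paper: assume the angle at some point $x_j$ is less than $\alpha$, compose $Dg_{x_{j-1}}$ with a small rotation to force $F_g(x_{j-1})$ into $E_g(x_j)$, and invoke Lemma~\ref{Franks-conseq} (with $n=1$) to produce a nearby endomorphism with full-dimensional kernel, contradicting \eqref{eq-assumption}. The paper's proof is terser but structurally identical; your version makes the relabelling and the invocation of Lemma~\ref{invariance} explicit, which is a welcome clarification.
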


\begin{proof} 
Suppose that there exists a sequence of points $x_i$ such that the angles between $E_g(x_i)$ and $F_g(x_i)$ is less than $\alpha$. Let $R:T_{x_i}M \to T_{x_i}M$ be a rotation of angle less than $\alpha$ such that $R(F(x_i))=E(x_i)$. Then, $L_i=R \circ Dg$ satisfies
$$\|Dg - L_i \|\leq \varepsilon \ \ \text{and} \ \ L_i(F_g(x_{i-1}))=E_g(x_i),$$
which is a contradiction. Therefore, for every $(x_i)_i \in \Lambda_g$ and $i \in \mathbb{Z}$, $$\varangle (E_g(x_i),F_g(x_i)) \geq \alpha.$$
\end{proof}



Note that so far we have proved that for all $g \in \mathcal{U}_f$ with $\Lambda_g$ nonempty, the splitting $E_g\oplus F_g$ over $\Lambda_g$ is invariant and its angle is uniform bounded away from zero. Finally,  let us prove that the invariant splitting constructed above has the domination property.

\begin{lemma}[Uniform domination property] \label{domination-openess}
There exists a neighborhood $\mathcal{U}$ of $f$ contained in  $\mathcal{U}_f$ and an integer $\ell >0$ such that for every $g \in \mathcal{U}$, whose $\Lambda_g$ is nonempty, we have for every $(x_i)_i \in \Lambda_g$ that:
\begin{align}\tag{III}
\|Dg^{\ell}\mid_{E_g(x_i)}\|\leq \frac{1}{2}\|Dg^{\ell}\mid_{F_g(x_i)}\|, \ \ \text{for all} \ \ i\in \mathbb{Z}.
\end{align}
\end{lemma}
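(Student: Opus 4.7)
The plan is to argue by contradiction, in the spirit of the Ma\~{n}\'e--Pujals--Sambarino perturbation technique, and to invoke Lemma~\ref{Franks-conseq} to produce an endomorphism inside $\mathcal{U}_f$ displaying full-dimensional kernel, contradicting the standing hypothesis \eqref{eq-assumption}. I would begin by assuming that no pair $(\mathcal{U},\ell)$ works: for every neighborhood $\mathcal{U}\subseteq\mathcal{U}_f$ and every integer $\ell\geq 1$ there exist $g\in\mathcal{U}$, an orbit $(x_i)_i\in\Lambda_g$, and (after a shift) an index $0$ at which
$$\|Dg^{\ell}|_{E_g(x_{0})}\|\;>\;\tfrac{1}{2}\,\|Dg^{\ell}|_{F_g(x_{0})}\|.$$
Because $E_g,F_g$ are $Dg$-invariant one-dimensional subbundles throughout each block of consecutive non-critical iterates, the ratio of norms is multiplicative inside such a block. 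Using transitivity of $f$ together with $\mathrm{int}(\mathrm{Cr}(f))\neq\emptyset$, I would locate an orbit in $\Lambda_g$ with a sufficiently long non-critical segment and extract $\Gamma=\{x_0,\dots,x_{n-1}\}$ strictly inside a single block $(x_{\tau^{-}_0},\dots,x_{\tau^{+}_0})$, with $\tau^{-}_0<0$ and $n\leq\tau^{+}_0$ as required by Lemma~\ref{Franks-conseq}, and along which the cumulative non-domination
$$\prod_{i=1}^{n}\frac{\|Dg|_{E_g(x_{i-1})}\|}{\|Dg|_{F_g(x_{i-1})}\|}\;\geq\;\tfrac{1}{2}$$
persists.

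\medskip

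The heart of the argument is to produce linear maps $L_i=R_i\circ Dg_{x_{i-1}}:T_{x_{i-1}}M\to T_{x_i}M$, where each $R_i$ is a rotation of $T_{x_i}M$ of angle $\theta_i$ bounded by the fixed $\alpha$ chosen at the outset of Section~\cref{pf-key-thm} (so that $\|L_i-Dg_{x_{i-1}}\|<\varepsilon$ automatically), satisfying
$$L_n\circ\cdots\circ L_1\bigl(F_g(x_0)\bigr)\;=\;E_g(x_n).$$
Working in bases adapted to $E_g\oplus F_g$ along the orbit---available thanks to the uniform angle Lemma~\ref{unif-angle}---a small rotation $\theta_i$ inserted at step $i$ tilts the forward image of $F_g(x_0)$ at $x_n$ by approximately $\theta_i\cdot\|Dg^{n-i}|_{E_g(x_i)}\|/\|Dg^{n-i}|_{F_g(x_i)}\|$. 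The non-domination condition keeps these ratios uniformly bounded below, so a distributed choice of angles $\theta_i<\alpha$ accumulates a total tilt exceeding $\varangle(F_g(x_n),E_g(x_n))\leq\pi/2$ provided $n$ is large enough, and a continuity/intermediate-value argument then selects exact values $\theta_i$ realizing the alignment. Feeding these data into Lemma~\ref{Franks-conseq} yields $\tilde{g}\in\mathcal{U}_f$ and $m\geq 1$ with $|\ker(D\tilde{g}^m)|=2$, contradicting \eqref{eq-assumption} and closing the argument.

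\medskip

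The main obstacle I expect is the quantitative bookkeeping in the perturbation step: guaranteeing simultaneously that each local rotation angle $\theta_i$ stays under the Franks threshold $\alpha$ and that the composition of the resulting rotations realizes the exact realignment $F_g(x_0)\mapsto E_g(x_n)$. This is precisely the two-dimensional perturbative mechanism underlying the Ma\~{n}\'e and Pujals--Sambarino arguments in the invertible setting; here it has to be organized segment by segment within a single block between consecutive critical visits, and the full-dimensional kernel conclusion of Lemma~\ref{Franks-conseq} plays the role of the sink/source obstruction used in the diffeomorphism case. A secondary technical point is the existence, for any prescribed $n$, of an orbit in $\Lambda_g$ containing a non-critical block of length at least $n$ along which the non-domination persists; this I would handle by combining the transitivity of the shift on $M_g$ with the openness of $\mathrm{Cr}(g)$ to show that arbitrarily long non-critical excursions are available.
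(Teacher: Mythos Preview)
Your overall strategy---insert small rotations along a non-dominated orbit segment to carry $F_g(x_0)$ onto $E_g(x_n)$, then feed this into Lemma~\ref{Franks-conseq}---is indeed the engine of the paper's proof, and your rotation mechanism is exactly the content of Lemma~\ref{Potriethesis}. However, what you flag as a ``secondary technical point'' is in fact the primary difficulty, and your proposed resolution of it does not work.

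The rotation argument (your ``heart of the argument'', the paper's Lemma~\ref{Potriethesis}) needs uniform bounds $\max\{\|Dg_{x_i}\|,\|Dg_{x_i}^{-1}\|\}\leq K$ along $\Gamma$; such a $K$ exists only once the segment is known to lie in $M\setminus\mathrm{U}$ for a fixed neighborhood $\mathrm{U}$ of $\mathrm{Cr}(f)$. Your contradiction hypothesis supplies, for each $\ell$, an orbit with $\|Dg^{\ell}|_{E_g(x_0)}\|>\tfrac12\|Dg^{\ell}|_{F_g(x_0)}\|$---one inequality at one time---and says nothing about where $x_0,\dots,x_{\ell-1}$ sit relative to $\mathrm{Cr}(g)$. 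Transitivity together with $\mathrm{int}(\mathrm{Cr}(f))\neq\emptyset$ produces orbits with long \emph{critical} visits, not long non-critical ones; and even if long non-critical blocks exist, nothing ties them to the orbit on which non-domination was observed.

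The paper reverses the implication: Lemma~\ref{domination-open} (the ``first step'' of \S\ref{uniform-DP}) proves that \emph{if} non-domination holds for every $1\leq j\leq l$, \emph{then} $x_0,\dots,x_{l-1}\notin\mathrm{U}$. The mechanism is that near $\mathrm{Cr}(f)$ the derivative $Dg$ collapses the dual cone $\mathscr{C}^{\ast}_{V_g}$ onto a thin cone around $Dg(W_g(x))$, which by Lemma~\ref{cont-nearby-sing} contains $F_g(g(x))$ but not $E_g(g(x))$; so a single visit to $\mathrm{U}$ either forces domination outright (Lemma~\ref{constant-c0}) or permits a two-rotation alignment that contradicts \eqref{eq-assumption} via Lemma~\ref{Franks-conseq}. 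Only \emph{after} this reduction is the uniform $K$ available, and only then does the paper invoke Lemma~\ref{Potriethesis} to prove Claim~1, followed by a standard telescoping (Claim~2) to pass from ``some $j\leq l$'' to a uniform $\ell$. Your sketch is missing this entire first step; without it the perturbation bookkeeping cannot even begin, since no uniform $K$ is in hand.
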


The proof of the uniform domination property will be done in the next subsection. The lemma above is slightly more general than the domination property, indeed the statement claims that the angle is bounded away from zero, and the domination property are uniform in an open set for endomorphisms whose ``lambda-set" is nonempty. This will be useful when we wish to prove that the accumulation point of $g \in \mathcal{U}$ with $\Lambda_g$ nonempty, has the domination property as well. Therefore, we conclude the proof of Theorem \ref{key-thm}. \qed  

\subsection{Uniform domination property}\label{uniform-DP}
This section is devoted to prove the Lemma \ref{domination-openess} (Uniform domination property) stated in previous section.
Let $f$ be an endomorphism displaying critical points satisfying the assumption \eqref{eq-assumption}. 



Let us highlight that for proving this lemma will not be assumed that $\Lambda_f$ is nonempty.
That is, in order to obtain the uniform domination property,  $\Lambda_f$ nonempty is not a necessary condition.
The proof of the uniform dominated property will be divided in two steps.
 The first one shows the domination property in a neighborhood of the critical points.
 In the second step, we extend the domination property got in the first step  to the ``lambda-set'' to conclude the lemma.



\subsubsection*{\uline{First step}}\label{pf-domination-critic} 



Domination property nearby the set of critical points.

\begin{lemma}\label{domination-open} 
There exist two neighborhoods $\mathcal{U}'\subseteq \mathcal{U}_f$ of $f$ and $\mathrm{U}$ of $\mathrm{Cr}(f)$ such that for every $g \in \mathcal{U}'$ which $\Lambda_g$ is well-defined, we have that if  $(x_i)_i \in \Lambda_g$ satisfies
\begin{align}\label{eq-lemma}
\|Dg^j\mid_{E_g(x_0)}\|\geq \frac{1}{2}\|Dg^j\mid_{F_g(x_0)}\|,
\end{align}
for every $1\leq j \leq l$. Then, $x_0,x_1,\dots, x_{l-1} \notin \mathrm{U}$.
\end{lemma}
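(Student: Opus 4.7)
The plan is to argue by contradiction, applying Franks' Lemma together with its consequence Lemma~\ref{Franks-conseq} to construct an endomorphism arbitrarily close to $f$ having full-dimensional kernel, thereby contradicting the standing assumption \eqref{eq-assumption}. I will assume the lemma fails and extract shrinking sequences of neighborhoods $\mathcal{U}_n \subseteq \mathcal{U}_f$ of $f$ and $U_n$ of $\mathrm{Cr}(f)$, endomorphisms $g_n \in \mathcal{U}_n$, orbits $(x_i^n) \in \Lambda_{g_n}$, integers $l_n \geq 1$, and indices $0 \leq k_n \leq l_n - 1$ with $x_{k_n}^n \in U_n$ and $\|Dg_n^j|_{E_{g_n}(x_0^n)}\| \geq \frac{1}{2}\|Dg_n^j|_{F_{g_n}(x_0^n)}\|$ for every $1 \leq j \leq l_n$. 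By choosing $k_n$ minimal, I will have $x_0^n,\ldots,x_{k_n-1}^n \notin U_n$; up to a subsequence, $x_{k_n}^n$ will converge to some $c \in \mathrm{Cr}(f)$, whence $|\det Dg_{n,x_{k_n}^n}| \to 0$.

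The first key ingredient will be a pointwise estimate. Since the angle between $E_g$ and $F_g$ remains bounded below by some $\alpha > 0$ for all $g \in \mathcal{U}_f$ by Lemma~\ref{unif-angle}, the area-distortion formula yields
\[
\|Dg|_{E_g(x)}\| \cdot \|Dg|_{F_g(x)}\| \leq \frac{1}{\sin\alpha}\,|\det Dg_x|
\]
at every $x$ along any orbit in $\Lambda_g$ with $g \in \mathcal{U}_f$. Combining this with the non-domination hypothesis at time $k_n + 1$ will give
\[
\prod_{j=0}^{k_n}\|Dg_n|_{F_{g_n}(x_j^n)}\|^2 \leq 2\prod_{j=0}^{k_n}\bigl(\|Dg_n|_{E_{g_n}(x_j^n)}\| \cdot \|Dg_n|_{F_{g_n}(x_j^n)}\|\bigr) \leq \frac{2}{(\sin\alpha)^{k_n+1}}\prod_{j=0}^{k_n}|\det Dg_{n,x_j^n}|.
\]
By the minimality of $k_n$, the determinants $|\det Dg_{n,x_j^n}|$ for $j < k_n$ remain uniformly bounded while $|\det Dg_{n,x_{k_n}^n}| \to 0$. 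Extracting a geometric mean, I will then locate an index $j_n \in \{0,\ldots,k_n\}$ along the orbit at which $\|Dg_n|_{F_{g_n}(x_{j_n}^n)}\| \to 0$.

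Next I apply Franks' Lemma at this index $j_n$. Let $L \colon T_{x_{j_n}^n} M \to T_{x_{j_n+1}^n} M$ be the linear map that agrees with $Dg_{n,x_{j_n}^n}$ on $E_{g_n}(x_{j_n}^n)$ and sends $F_{g_n}(x_{j_n}^n)$ to $\{0\}$; then $\|L - Dg_{n,x_{j_n}^n}\| = \|Dg_n|_{F_{g_n}(x_{j_n}^n)}\|$ will fall below the $\varepsilon$ provided by Franks' Lemma once $n$ is sufficiently large. Setting $L_{j_n + 1} = L$ and $L_i = Dg_{n,x_{i-1}^n}$ for $i \neq j_n + 1$, a direct computation gives
\[
L_{j_n+1} \cdots L_1 \bigl(F_{g_n}(x_0^n)\bigr) = L\bigl(F_{g_n}(x_{j_n}^n)\bigr) = \{0\}\subseteq E_{g_n}(x_{j_n+1}^n).
\]
Lemma~\ref{Franks-conseq} will then deliver a map $\tilde g_n$ arbitrarily close to $g_n$ (hence to $f$) with some iterate having full-dimensional kernel, contradicting \eqref{eq-assumption}.

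The main technical obstacle will be controlling the factor $(\sin\alpha)^{-(k_n+1)}$ when $k_n$ may be large. Taking the $2(k_n+1)$-th root absorbs this constant, and the contradiction will materialize provided $U_n$ is chosen to shrink to $\mathrm{Cr}(f)$ fast enough that the geometric mean $\bigl(\prod_{j=0}^{k_n}|\det Dg_{n,x_j^n}|\bigr)^{1/(k_n+1)}$ still tends to zero. This delicate coupling between the rate of shrinkage of $U_n$ and the potential size of $k_n$, together with the uniform angle bound and the $C^1$-convergence $g_n \to f$, will complete the argument.
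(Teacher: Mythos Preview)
Your approach has a genuine gap, and it sits exactly where you flag it. You choose $U_n$ first, and only afterwards does the failure of the lemma hand you a witness orbit and the minimal index $k_n$; nothing prevents the adversary from returning $k_n$ so large that the single small determinant $\epsilon_n = |\det Dg_{n,x_{k_n}^n}|$ is diluted in the geometric mean to $\epsilon_n^{1/(k_n+1)}$, which need not tend to zero however fast $U_n$ shrinks. So the geometric-mean step does not deliver an index with $\|Dg_n|_{F_{g_n}}\|$ small. Worse, that target is in fact unreachable: once one knows that near the critical set $F_g$ always lies in the cone about the \emph{largest} singular direction (this is precisely the content of Lemma~\ref{cont-nearby-sing}, itself proved via Lemma~\ref{Franks-conseq}), it follows that $\|Dg|_{F_g}\|$ is uniformly bounded below on all of $M$, and the smallness of $|\det Dg|$ near $\mathrm{Cr}(f)$ is absorbed entirely by $\|Dg|_{E_g}\|$. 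Your rank-drop perturbation on $F_g$ is therefore never $C^1$-small.

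The paper's route is different. It first pins down the position of $E_g$ and $F_g$ relative to the singular-value directions $V_g, W_g$ in a neighborhood $\mathrm{U}$ of the critical set (Proposition~\ref{new-splitting} and Lemma~\ref{cont-nearby-sing}), from which $x_0 \notin \mathrm{U}$ follows at once via Lemma~\ref{constant-c0}. For the remaining indices it runs a cone-field argument in the $E_g \oplus F_g$ frame: the non-domination hypothesis forces the iterates of a suitably chosen vector to stay outside a fixed cone about $F_g$, while a passage through $\mathrm{U}$ would force that same iterate close to $F_g$ after one more step; this tension produces two small-angle rotations that feed Lemma~\ref{Franks-conseq} and contradict \eqref{eq-assumption}. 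The perturbations used are rotations aligning one-dimensional directions, not rank drops on $F_g$.
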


The proof requires some preliminaries notation and results.

\medskip

Since $\min_{\|v\|=1}\|Df_x(v)\|< \max_{\|w\|=1}\|Df_x(w)\|$ for every $x$ nearby $\mathrm{Cr}(f)$, we define $V_f(x)$ and $W_f(x)$ as the subspaces of $T_xM$ in a neighborhood $ \mathrm{U}$ of $\mathrm{Cr}(f)$ verifying:
\begin{align}\label{new-split}
\|Df\mid_{V_f(x)}\|=\min_{\|u\|=1}\|Df_x(u)\| \ \ \text{and} \ \ 
\|Df\mid_{W_f(x)}\|=\max_{\|w\|=1}\|Df_x(w)\|.
\end{align}
More general, we can assume that for every $g \in \mathcal{U}_f$ one has in $\mathrm{U}$,
$$\min_{\|v\|=1}\|Dg_x(v)\|< \max_{\|w\|=1}\|Dg_x(w)\|;$$
and define $V_g(x)$ and $W_g(x)$ for $x \in \mathrm{U}$, similarly. Furthermore, whenever $\mathrm{Cr}(g) \neq \emptyset,$  $\mathrm{Cr}(g)$ is contained in $\mathrm{U}$, that is, $\mathrm{U}$ is a neighborhood of the critical points for every $g \in \mathcal{U}_f$, and   $V_g(x)=\ker(Dg_x)$ for all $x \in \mathrm{Cr}(g)$. On the other hand, note that the following functions: 
\begin{align}\label{continuity-VW}
\mathcal{U}_f \ni g \mapsto V_g, \, W_g, \ \ \text{and} \ \ \mathrm{U} \ni x \mapsto V_g(x), \, W_g(x)
\end{align}
are continuous. Moreover, it is well known that $V_g(x)$ and $W_g(x)$ are orthogonal for all $x \in \mathrm{U}$. 

Now consider the cone-field $\mathscr{C}_{V_g}$ with core $V_g$ and angle $\eta>0$,
$$\mathscr{C}_{V_g}: \mathrm{U} \ni x \mapsto \mathscr{C}_{V_g}(x,\eta)=\{v+w \in V_g(x)\oplus W_g(x): \|w\|\leq \eta \|v\|\};$$
and recall that $\mathscr{C}_{V_g}^{\ast}(x,\eta)$ is the dual cone-field of $\mathscr{C}_{V_g}(x,\eta)$ defined as the closure of $T_xM\backslash\mathscr{C}_{V_g}(x,\eta)$.

Next result gives an interesting property about the splitting $V_g\oplus W_g$ over $\mathrm{U}$.

\begin{prop}\label{new-splitting}
Given $\eta>0$ and $\theta>0,$ there are two neighborhoods $\mathcal{U} \subseteq \mathcal{U}_f$ of $f$ and $\mathrm{U}$ of the critical points for every $g \in \mathcal{U}$ so that for every $g \in \mathcal{U}$ and $x \in \mathrm{U}$ holds 
\begin{align}\label{VW-property}
\varangle(Dg(W_g(x)),\mathbb{R}\langle Dg_x(u)\rangle)<\theta, \, \forall\, u  \in \mathscr{C}^{*}_{V_g}(x,\eta).
\end{align}
\end{prop}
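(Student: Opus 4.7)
\medskip
\noindent\textbf{Proof proposal for Proposition \ref{new-splitting}.}

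The plan is to exploit the fact that, under the standing hypothesis \eqref{eq-assumption}, the differential $Dg_c$ at a critical point $c$ of any $g\in\mathcal{U}_f$ has rank exactly one, so its image is the one-dimensional line $\mathbb{R}\langle Dg_c(W_g(c))\rangle$ itself. In particular, for every $u\in T_cM$ one has $Dg_c(u)\in \mathbb{R}\langle Dg_c(W_g(c))\rangle$, so the angle in \eqref{VW-property} is identically zero at critical points. The proposition will follow by promoting this to a neighborhood via continuity and compactness.

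More concretely, first I would shrink $\mathcal{U}\subseteq \mathcal{U}_f$ and choose a neighborhood $\mathrm{U}$ of $\mathrm{Cr}(f)$ so that $\mathrm{Cr}(g)\subseteq \mathrm{U}$ for every $g\in\mathcal{U}$, and so that on $\mathrm{U}$ the smallest and largest singular values $\sigma_{\min}(Dg_x)$ and $\sigma_{\max}(Dg_x)$ are strictly distinct, guaranteeing that $V_g(x),W_g(x)$ are well-defined and continuous in $(g,x)\in\mathcal{U}\times\mathrm{U}$ (cf.\ \eqref{continuity-VW}). Then, given $u\in \mathscr{C}^{*}_{V_g}(x,\eta)$, I would write $u=\alpha v_x+\beta w_x$ for unit vectors $v_x\in V_g(x),\ w_x\in W_g(x)$, with $|\alpha|\le \eta^{-1}|\beta|$, and decompose
\[
Dg_x(u)\;=\;\alpha\,Dg_x(v_x)\,+\,\beta\,Dg_x(w_x).
\]
Since $V_g\perp W_g$ and by definition $\|Dg_x(v_x)\|=\sigma_{\min}(Dg_x)$ and $\|Dg_x(w_x)\|=\sigma_{\max}(Dg_x)$, an elementary triangle-inequality estimate yields
\[
\tan\varangle\bigl(\mathbb{R}\langle Dg_x(u)\rangle,\,Dg(W_g(x))\bigr)\;\le\;\frac{\eta^{-1}\,r(g,x)}{1-\eta^{-1}\,r(g,x)},
\qquad r(g,x):=\frac{\sigma_{\min}(Dg_x)}{\sigma_{\max}(Dg_x)},
\]
provided $r(g,x)<\eta$.

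The final step is to make $r(g,x)$ uniformly small. The numerator $\sigma_{\min}(Dg_x)$ depends continuously on $(g,x)$ and vanishes exactly on $\mathrm{Cr}(g)$; the denominator $\sigma_{\max}(Dg_x)$ is continuous and, thanks to the rank-one property at critical points of $f$ together with the compactness of $\mathrm{Cr}(f)$, is bounded below by some $m>0$ on a neighborhood of $\mathrm{Cr}(f)$, uniformly for $g$ close to $f$ in the $C^1$ topology. Hence, given $\theta>0$, one can further shrink $\mathrm{U}$ and $\mathcal{U}$ so that $r(g,x)$ is as small as desired, yielding the bound $\varangle(Dg(W_g(x)),\mathbb{R}\langle Dg_x(u)\rangle)<\theta$ uniformly for $g\in\mathcal{U}$, $x\in\mathrm{U}$, $u\in \mathscr{C}^{*}_{V_g}(x,\eta)$.

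The main technical obstacle is obtaining these estimates \emph{simultaneously} in $g$ and $x$: one has to exploit that $V_g,W_g$ and the singular values vary jointly continuously in the $C^1$ topology, and that the positivity $\sigma_{\max}>0$ at critical points of $f$ (a consequence of the $|\ker Df_x|\le 1$ assumption) is a compact-set, hence uniform, condition. All the rest is linear-algebraic bookkeeping in the orthogonal frame $V_g\oplus W_g$.
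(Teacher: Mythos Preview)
Your proposal is correct and follows essentially the same approach as the paper: both arguments rest on the observation that at a critical point the rank-one differential collapses the entire dual cone onto the line $Dg(W_g(x))$, so the angle in \eqref{VW-property} vanishes there, and then invoke the joint continuity \eqref{continuity-VW} together with compactness to extend this to neighborhoods $\mathcal{U}$ and $\mathrm{U}$. The paper states this in one sentence, whereas you make the continuity step quantitative via the singular-value ratio $r(g,x)=\sigma_{\min}/\sigma_{\max}$; your explicit bound is a harmless over-estimate (in fact $Dg_x(v_x)\perp Dg_x(w_x)$ by the SVD, so $\tan\varangle\le\eta^{-1}r(g,x)$ directly), but either way the conclusion is the same.
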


\begin{proof}
Since $Df(\mathscr{C}^{*}_{V_f}(x,\eta))=Df(W_f(x))$ for every $x \in \mathrm{Cr}(f)$ and by the equation \eqref{continuity-VW}, we can find a neighborhood $\mathcal{U}\subseteq \mathcal{U}_f$ of $f$ and a neighborhood $\mathrm{U}$ of the critical points for every $g \in \mathcal{U}$ verifying  \eqref{VW-property} as we wished.
\end{proof}

Next lemma provides the relation between the splitting $E_g\oplus F_g$ and $V_g\oplus W_g$.
Given $\eta>0$ and $\theta >0$ small enough,
consider the  neighborhoods $\mathcal{U}$ and $\mathrm{U}$ given in Proposition \ref{new-splitting}.

\begin{lemma}\label{cont-nearby-sing}
For every $g \in \mathcal{U}$ with $\Lambda_g$ well-defined and for every $(x_i)_i \in \Lambda_g$ with $x_0=x \in \mathrm{U}$ holds
\begin{align}\label{eq-cone}
F_g(x) \subseteq \mathscr{C}_{V_g}^{\ast}(x,\eta) \ \ \text{and} \ \ E_g(x) \subseteq \mathscr{C}_{V_g}(x,\eta).
\end{align}
In particular, it follows that $\varangle(Dg_x(W_g(x)),F_g(g(x)))<\theta$.

\end{lemma}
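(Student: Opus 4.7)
The plan is to prove the two cone inclusions by a $C^1$-perturbation argument, using Lemma \ref{Franks-conseq} to contradict the standing assumption \eqref{eq-assumption} that no nearby endomorphism has full-dimensional kernel; the ``in particular'' clause then drops out of Proposition \ref{new-splitting} once the first inclusion is in hand. Indeed, assuming $F_g(x) \subseteq \mathscr{C}_{V_g}^{\ast}(x,\eta)$, a unit generator $u$ of $F_g(x)$ satisfies $Dg_x(u) \in F_g(g(x))$ by the invariance property (Lemma \ref{invariance}), and Proposition \ref{new-splitting} applied to $u$ yields $\varangle(Dg_x(W_g(x)),F_g(g(x))) < \theta$ at once.

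To establish the cone inclusions themselves, I would first verify them at critical points and then extend to all of $\mathrm{U}$ by contradiction. At $x = x_c \in \mathrm{Cr}(g)$, the identity $E_g(x_c) = \ker(Dg_{x_c}) = V_g(x_c)$ puts $E_g(x_c)$ in the core of $\mathscr{C}_{V_g}(x_c,\eta)$, while the uniform angle $\varangle(E_g,F_g)\geq \alpha$ from Lemma \ref{unif-angle}, combined with the choice $\arctan\eta<\alpha$, places $F_g(x_c)$ in the dual cone. For $x = x_0 \in \mathrm{U}\setminus \mathrm{Cr}(g)$, I would assume toward contradiction that $F_g(x_0) \in \mathrm{int}(\mathscr{C}_{V_g}(x_0,\eta))$, so that $F_g(x_0)$ is within angle $\arctan\eta<\alpha$ of $V_g(x_0)$. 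Using the continuity of $V_g,W_g$ on $\mathrm{U}$ (see \eqref{continuity-VW}) and Proposition \ref{new-splitting} iteratively along the forward orbit segment $(x_0,\dots,x_{\tau_0^{+}})$, I would build linear maps $L_i:T_{x_{i-1}}M\to T_{x_i}M$ with $\|L_i-Dg_{x_{i-1}}\|<\varepsilon$ whose composition satisfies $L_{\tau_0^{+}}\cdots L_1(F_g(x_0)) = V_g(x_{\tau_0^{+}}) = E_g(x_{\tau_0^{+}})$; invoking Lemma \ref{Franks-conseq} then produces an endomorphism in $\mathcal{U}_f$ with full-dimensional kernel, contradicting \eqref{eq-assumption}. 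The inclusion $E_g(x)\subseteq \mathscr{C}_{V_g}(x,\eta)$ is handled symmetrically, now starting from the identity $F_g(x_{\tau_0^{-}+1}) = Dg_{x_{\tau_0^{-}}}(W_g(x_{\tau_0^{-}}))$ at the past critical point and perturbing the backward orbit segment to send $E_g(x_0)$ onto $V_g(x_0)$.

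The main obstacle will be the coordinated construction of the perturbations $L_i$: the rotations embedded in them must each be of magnitude less than $\alpha$, yet their compounded effect must land the iterated image on the one-dimensional subspace $V_g(x_{\tau_0^{+}})$. I anticipate this requires shrinking $\mathrm{U}$ so that along orbit segments visiting $\mathrm{U}$ the $V_g$-core is dynamically near-invariant under $Dg$; the terminal collapse at $x_{\tau_0^{+}}$, where $Dg$ annihilates $V_g$, then provides the margin to absorb any residual error accumulated from the earlier rotations.
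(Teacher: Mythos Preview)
Your proposal has a genuine gap in the iterative construction of the perturbations $L_i$. The orbit segment $x_0,\dots,x_{\tau_0^{+}}$ need not remain in $\mathrm{U}$: by definition $\tau_0^{+}$ is the first time the forward orbit enters $\mathrm{Cr}(g)$, so the intermediate points $x_1,\dots,x_{\tau_0^{+}-1}$ are regular and have no reason to stay in the small neighborhood $\mathrm{U}$ of the critical set. Thus the splitting $V_g\oplus W_g$ and Proposition~\ref{new-splitting} are not available at those points, and your proposed iteration has no tool to keep the propagated direction under control. Moreover, the maps $Dg_{x_0},\dots,Dg_{x_{\tau_0^{+}-1}}$ you compose are all invertible, so there is no ``terminal collapse'' to absorb accumulated error; the number of steps $\tau_0^{+}$ is unbounded, so small rotations at each step cannot in general be made to land the composition on a prescribed line.

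The paper's argument is entirely local, at $x$ and $g(x)$, and this is the key point you are missing. Because $x\in\mathrm{U}$, the single map $Dg_x$ already collapses the whole dual cone $\mathscr{C}_{V_g}^{\ast}(x,\eta)$ into a cone of opening $<\theta$ around $Dg_x(W_g(x))$ (Proposition~\ref{new-splitting}). The proof then rules out the three bad configurations at once: if $E_g(x),F_g(x)$ were both in $\mathscr{C}_{V_g}(x,\eta)$ their angle would be $<\alpha$, contradicting Lemma~\ref{unif-angle}; if both were in $\mathscr{C}_{V_g}^{\ast}(x,\eta)$, their images $E_g(g(x)),F_g(g(x))$ would lie within angle $2\theta$ of each other and a single rotation at $g(x)$ feeds Lemma~\ref{Franks-conseq}; and if $F_g(x)\subseteq\mathscr{C}_{V_g}(x,\eta)$ while $E_g(x)\subseteq\mathscr{C}_{V_g}^{\ast}(x,\eta)$, one picks $v\in\mathscr{C}_{V_g}^{\ast}(x,\eta)$ with $\varangle(\mathbb{R}\langle v\rangle,F_g(x))<2\eta$ and observes that both $Dg_x(v)$ and $E_g(g(x))=Dg_x(E_g(x))$ lie within $\theta$ of $Dg_x(W_g(x))$, so two small rotations (one at $x$, one at $g(x)$) again feed Lemma~\ref{Franks-conseq}. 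No forward or backward orbit tracking is needed.
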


\begin{proof}
Without loss of generality, we can assume  $\eta >0$ and $\theta>0$ small enough 
such that  $0<\eta< \alpha$, where $\alpha$ is given by the Lemma \ref{unif-angle} (Uniform angle),
implying that for every $g \in \mathcal{U}_f$ with $\Lambda_g$ nonempty has  the angle between $E_g$ and $F_g$ greater than $\alpha$, 
and for every rotation $R$ of angle less than $2\eta$ or $2\theta$ holds for every $g \in \mathcal{U}$,
\begin{align}\label{ineq:rotation}
\|R\circ Dg -Dg \|< \varepsilon.
\end{align}

\smallskip

In order to prove  \eqref{eq-cone} we will show that:
\begin{enumerate}[label=(\alph*)]
\item $E_g(x)$ and $F_g(x)$ cannot be contained simultaneously  in neither $\mathscr{C}_{V_g}(x,\eta)$ nor $\mathscr{C}_{V_g}^{\ast}(x,\eta)$;
\item $F_g(x)$ and $E_g(x)$ cannot be contained in $\mathscr{C}_{V_g}(x,\eta)$ and $\mathscr{C}_{V_g}^{\ast}(x,\eta)$, respectively.
\end{enumerate}

\medskip

Let us prove item (a). As the angle between $E_g$ and $F_g$ is greater than $\alpha$, one concludes that both $E_g(x)$ and $F_g(x)$ cannot be contained in $\mathscr{C}_{V_g}(x,\eta)$. On the other hand, if $E_g(x)$ and $F_g(x)$ are contained in $\mathscr{C}_{V_g}^{\ast}(x,\eta)$,
by \eqref{VW-property} we have that
\begin{align*}
\varangle(E_g(g(x)),F_g(g(x)))\leq \varangle(E_g(g(x)),Dg(W_g(x))) +\varangle(Dg(W_g(x)), F_g(g(x)))\leq 2\theta. 
\end{align*}
Then, we can take a rotation $R:T_{g(x)}M \to T_{g(x)}M$ such that $$R(F_g(g(x)))=E_g(g(x)) \ \ \text{and} \ \ \|R\circ Dg_x-Dg_x\|<\varepsilon$$
which implies, by Lemma \ref{Franks-conseq}, that some perturbation of $g$ has full-dimensional kernel which is a contradiction. This proves (a).

To prove (b), we suppose that $F_g(x)\subseteq \mathscr{C}_{V_g}(x,\eta)$ and $E_g(x) \subseteq \mathscr{C}_{V_g}^{\ast}(x,\eta)$. Then, we can take $v \in \mathscr{C}_{V_g}^{\ast}(x,\eta)$ such that $\varangle(\mathbb{R}\langle v\rangle, F_g(x))<2\eta$ and
\begin{align*}
\varangle(\mathbb{R}\langle Dg_x(v)\rangle, E_g(g(x)))&\leq \varangle(\mathbb{R}\langle Dg_x(v)\rangle, Dg(W_g(x)))\\ &+\varangle(Dg(W_g(x)), E_g(g(x))) \leq 2\theta,
\end{align*}
and so, there exist two rotations $R_0:T_xM \to T_xM$ and $R_1:T_{g(x)}M \to T_{g(x)}M$ such that
$R_0(F_g(x))=\mathbb{R}\langle v\rangle$ and $R_1(\mathbb{R}\langle Dg_x(v)\rangle)=E_g(g(x))$ with $\|R_{\ast}\circ Dg-Dg\|<\varepsilon$, for $\ast=0,1$, which contradicts the assumption (\ref{eq-assumption}). Therefore, we conclude the proof of (b), and in  consequence, of the lemma. 
\end{proof}

An immediate consequence of Lemma \ref{cont-nearby-sing} is the following.

\begin{lemma}\label{constant-c0}
Given $\nu >0$ and $\eta >0$ small enough, there exist neighborhoods $\mathcal{U}_f$ of $f$ and $\mathrm{U}$ of $\mathrm{Cr}(f)$ such that for every $g \in \mathcal{U}_f$ whose $\Lambda_g$ is nonempty, one has that for $(x_i)_i \in \Lambda_g$ with $x_0=x \in \mathrm{U}$,
\begin{align}
\|Dg\mid_{E_g(x)}\|< \nu \ \ \text{and} \ \ \|Dg_x(v)\|\geq 2\nu,\, \forall\, v \in \mathscr{C}_{V_g}^{\ast}(x,\eta).
\end{align}
\end{lemma}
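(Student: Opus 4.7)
The plan is to combine the inclusion $E_g(x) \subseteq \mathscr{C}_{V_g}(x,\eta)$ supplied by Lemma~\ref{cont-nearby-sing} with two continuity properties of the orthogonal splitting $V_g \oplus W_g$ near critical points. First, for $x \in \mathrm{Cr}(f)$ one has $V_f(x) = \ker(Df_x)$, so $\|Df\mid_{V_f(x)}\|$ vanishes on $\mathrm{Cr}(f)$, and by the joint continuity recorded in \eqref{continuity-VW}, the quantity $\|Dg\mid_{V_g(x)}\|$ can be made arbitrarily small by shrinking $\mathcal{U}_f$ and $\mathrm{U}$. Second, the standing assumption \eqref{eq-assumption} forces $|\ker(Df_x)| = 1$ at every $x \in \mathrm{Cr}(f)$, so the top singular value $\|Df\mid_{W_f(x)}\|$ is strictly positive on the compact set $\mathrm{Cr}(f)$; continuity extends this to a uniform lower bound $\|Dg\mid_{W_g(x)}\| \geq c_0 > 0$ on some neighborhood $\mathrm{U}$ of $\mathrm{Cr}(f)$ and some neighborhood of $f$ that I continue to call $\mathcal{U}_f$.

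With these two ingredients in hand, given $\nu, \eta > 0$ small I further shrink $\mathrm{U}$ and $\mathcal{U}_f$ so that $\|Dg\mid_{V_g(x)}\| < \delta$ for a constant $\delta$ to be calibrated below. For the contraction bound, any $u \in E_g(x) \subseteq \mathscr{C}_{V_g}(x,\eta)$ decomposes orthogonally as $u = v + w$ with $v \in V_g(x)$, $w \in W_g(x)$, $\|w\| \leq \eta\|v\|$; in particular $\|u\| \geq \|v\|$, and
\begin{align*}
\|Dg_x(u)\| \,\leq\, \|Dg\mid_{V_g}\|\,\|v\| + \|Dg\mid_{W_g}\|\,\|w\| \,\leq\, \bigl(\delta + \eta\,\|Dg\mid_{W_g}\|\bigr)\|v\|,
\end{align*}
which is $< \nu\|u\|$ once $\delta$ and $\eta$ are small with respect to $\nu$ and the (locally bounded) factor $\|Dg\mid_{W_g}\|$. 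This yields the first inequality $\|Dg\mid_{E_g(x)}\| < \nu$.

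For the expansion estimate on the dual cone, take a unit vector $u = v + w \in \mathscr{C}_{V_g}^{\ast}(x,\eta)$, so by the definition of the dual cone $\|v\| \leq \eta^{-1}\|w\|$. Orthogonality gives $1 = \|v\|^2 + \|w\|^2 \leq (1 + \eta^{-2})\|w\|^2$, whence $\|w\| \geq \eta/\sqrt{1+\eta^2}$ and $\|v\| \leq 1$. Since $W_g(x)$ is one-dimensional, $\|Dg(w)\| = \|Dg\mid_{W_g(x)}\|\,\|w\|$, and the reverse triangle inequality delivers
\begin{align*}
\|Dg_x(u)\| \,\geq\, \|Dg\mid_{W_g}\|\,\|w\| - \|Dg\mid_{V_g}\|\,\|v\| \,\geq\, \frac{c_0\,\eta}{\sqrt{1+\eta^2}} - \delta,
\end{align*}
which is at least $2\nu$ provided $\delta$ is chosen small compared to $c_0\eta$ and $\nu$ itself is chosen small compared to $c_0\eta$.

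The only delicate point is calibrating $\delta$ (and hence the final choice of $\mathrm{U}$ and $\mathcal{U}_f$) so that both estimates hold simultaneously; since $c_0$ is an absolute constant fixed by the positivity of the top singular value on $\mathrm{Cr}(f)$, and since $\|Dg\mid_{V_g(x)}\|$ can be driven to zero independently of $\eta$ and $\nu$ by further shrinking the neighborhoods, this calibration poses no real obstruction.
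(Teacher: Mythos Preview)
Your strategy matches the paper's: it records Lemma~\ref{constant-c0} as ``an immediate consequence of Lemma~\ref{cont-nearby-sing}'' without further argument, and you are supplying exactly the continuity-and-SVD details one expects. However, the two quantitative bounds you derive cannot hold simultaneously. From the contraction estimate you obtain $\|Dg\mid_{E_g(x)}\|\le \delta + K\eta$, which forces $\nu > K\eta$; from the expansion estimate on the dual cone you obtain a lower bound of order $c_0\eta$ (minus $\delta$), which forces $2\nu < c_0\eta$. Together these give $2K\eta < 2\nu < c_0\eta$, i.e.\ $2K<c_0$. But $c_0$ is a \emph{lower} bound and $K$ an \emph{upper} bound for the very same quantity $\|Dg\mid_{W_g(x)}\|$, so $c_0\le K$ and the chain of inequalities is impossible. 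The offending term $K\eta$ does not vanish when you shrink $\mathrm U$ and $\mathcal U_f$---only $\delta$ does---so the claim in your last paragraph that ``this calibration poses no real obstruction'' is not justified.

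The fix is to decouple the two cone openings. Lemma~\ref{cont-nearby-sing} applies for every sufficiently small opening, so invoke it with some auxiliary $\eta'\ll\eta$: on the resulting (smaller) neighborhood $\mathrm U'\subseteq\mathrm U$ one has $E_g(x)\subseteq\mathscr{C}_{V_g}(x,\eta')$, and your first estimate improves to $\|Dg\mid_{E_g(x)}\|\le \delta + K\eta'$, which can now be made $<\nu$ by choosing $\eta'$ small relative to $\nu$. The second estimate still refers to the dual cone $\mathscr{C}_{V_g}^{\ast}(x,\eta)$ with the \emph{given} $\eta$ from the statement, and is unaffected. Concretely: given $\eta$, take $\nu$ small enough that $2\nu < c_0\eta/\sqrt{1+\eta^2}$ (this is the meaning of ``$\nu$ small enough''), then pick $\eta'<\nu/(2K)$, and finally shrink $\mathrm U'$ further so that $\delta<\min(\nu, c_0\eta)/4$. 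With this adjustment your argument goes through.
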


Finally, we can prove Lemma \ref{domination-open}.

\begin{proof}[Proof of Lemma \ref{domination-open}]
It follows from Lemma \ref{cont-nearby-sing} that given $\eta>0$ and $\theta>0$, there are neighborhoods $\mathcal{U}$ of $f$ contained in $\mathcal{U}_f$ and $\mathrm{U}$ of $\mathrm{Cr}(f)$ so that for every $g \in \mathcal{U}$ whose $\Lambda_g$ is nonempty and $(x_i)_i \in \Lambda_g$ with $x_0=x \in \mathrm{U}$, one has that:
\begin{align*}
E_g(x) \subseteq \mathscr{C}_{V_g}(x,\eta) \ \ \text{and} \ \ F_g(x) \subseteq \mathscr{C}_{V_g}^{\ast}(x,\eta).
\end{align*}
In particular, for all $u \in \mathscr{C}_{V_g}^{\ast}(x,\eta)$ we have that $\varangle(\mathbb{R}\langle Dg(u)\rangle, Dg(W_g(x)))<\theta$.

For $(x_i)_i \in \Lambda_g$, we define the cone-field $\mathscr{C}_{E_g,F_g}(x_i,\beta)$ with core $F_g$ and length $\beta>0$ in coordinates $E_g\oplus F_g$ over $(x_i)_i$ by:
$$\mathscr{C}_{E_g,F_g}(x_i,\beta)=\{u_1+u_2\in E_g(x_i)\oplus F_g(x_i):\|u_1\|\leq \beta \|u_2\|\}.$$

Note that the cone-field above is in $E_g\oplus F_g$ coordinates which may not be an orthogonal splitting. However, since the angle between $E_g$ and $F_g$ is uniformly bounded away from zero, we may suppose that  $\eta, \theta >0$ are chosen small enough so that for every $(x_i)_i\in \Lambda_g$ whose $x_0 \in \mathrm{U}$ follows that:
\begin{itemize}
\item[-] for every $v \in \mathscr{C}_{V_g}(x_0,\eta)$ holds $\varangle(\mathbb{R}\langle v \rangle, E_g(x_0))<\beta$;\smallskip
\item[-] for all $u \in T_{x_0}M$ holds:
$$\varangle(\mathbb{R}\langle Dg_{x_0}(u)\rangle,F_g(g(x_0)))<2\theta \Longrightarrow Dg_{x_0}(u) \in \mathscr{C}_{E_g,F_g}(g(x_0),\beta/2).$$
\end{itemize}

Now, assume that $(x_i)_i \in \Lambda_g$ is any point satisfying the equation \eqref{eq-lemma}. That is, $(x_i)_i$ satisfies:
$$\|Dg^j\mid_{E_g(x_0)}\| \geq \frac{1}{2} \|Dg^j\mid_{F_g(x_0)}\|, \, \text{for each} \ \ 1 \leq j \leq l.$$
Hence, Lemma \ref{constant-c0} implies $x_0 \notin \mathrm{U}$. Thus, it remains to show that $x_1,\dots,x_{l-1} \notin \mathrm{U}$. Suppose, without loss of generality, that $x_{l-1} \in \mathrm{U}$. Let $u=u_1+u_2 \in E_g(x_0)\oplus F_g(x_0)$ a vector in the closure of $TM\backslash \mathscr{C}_{E_g,F_g}(x_0,\beta)$, denoted by $\mathscr{C}_{E_g,F_g}^{\ast}(x_{0},\beta)$. That is, $\|u_1\|\geq \beta \|u_2\|$. Then, $Dg^j(u)=Dg^j(u_1)+Dg^j(u_2) \in E_g(x_j)\oplus F(x_j)$ satisfying
\begin{align}\label{eq-final}
\|Dg^j(u_1)\|=\|Dg^j\mid_{E_g(x_0)}\|\|u_1\|\geq \frac{\beta}{2} \|Dg^j\mid_{F_g(x_0)}\|\|u_2\|=\frac{\beta}{2}\|Dg^j(u_2)\|.
\end{align}
In other words, $Dg^j(u)$ does not belong to $\mathscr{C}_{E_g,F_g}(x_j,\beta/2)$ for $1\leq j \leq l$. On the other hand, for every $w \in \mathscr{C}_{V_g}^{\ast}(x_{l-1},\eta)$,
\begin{align*}
\varangle(\mathbb{R}\langle Dg(w)\rangle, F_g(x_{l}))&\leq \varangle(\mathbb{R}\langle Dg(w)\rangle, Dg(W_g(x_{l-1})))\\& + \varangle(Dg(W_g(x_{l-1})), F_g(x_{l}))\leq 2\theta,
\end{align*}
and so $Dg^{l-1}(u)$ does not belong to $\mathscr{C}_{E_g,F_g}^{\ast}(x_{l-1},\eta)$ which implies that $Dg^{l-1}(u) \in \mathscr{C}_{V_g}(x_{l-1},\eta),$ and consequently, $ \varangle(\mathbb{R}\langle Dg^{l-1}(u) \rangle, E_g(x_{l-1}))<\beta$. Finally, assuming that $\beta >0$ is small enough so that $u \in \mathscr{C}_{E_g,F_g}(x_0,\beta)$ implies $\varangle(\mathbb{R}\langle u\rangle,F_g(x_0))$  is small enough, 
there are two rotations $R_1$ and $R_2$ on $T_{x_0}M$ and $T_{x_{l-1}}M$, respectively, such that 
$R_1(F_g(x_0))=\mathbb{R}\langle u \rangle,$ $R_2(\mathbb{R}\langle Dg^{l-1}(u) \rangle)=E(x_{l-1})$, and 
$\|R_{\ast}\,\circ Dg-Dg\|<\varepsilon$, for $\ast=1,2$.
However, by Lemma \ref{Franks-conseq} and assumption (\ref{eq-assumption}), we get a contradiction. This concludes the proof.
\end{proof}



\subsubsection*{\uline{Second step}} Domination property on the ``lambda-set". 
\smallskip

In order to finish the proof of the Lemma \ref{domination-openess} (Uniform domination property), we use the first step to extend the domination property to the corresponding ``lambda-set" for every nearby endomorphism. First let us introduce the following auxiliary result, the proof will be omitted and can be found in \cite[Appendix A]{Potrie}.

\begin{lemma}\label{Potriethesis}
Given $\kappa>0$ and $K>0$, there exists $l>0$ such that if $A_1,...,A_l$ is a sequence in $GL(2,\R)$ and $v, w$ are unit vectors in $\mathbb{R}^2$ verifying:
\begin{align}
\max_{1\leq i \leq l}\{\|A_i\|,\|A^{-1}_i\|\}\leq K; \ \ \text{and} \ \  \|A_l...A_1(v)\| \geq \frac{1}{2} \|A_l...A_1(w)\|.
\end{align}
Then, there exist rotations $R_1,...,R_l$ of angles less than $\kappa$ such that $$R_lA_l...R_1A_1(\R\langle w \rangle)=A_l...A_1(\R\langle v \rangle).$$
\end{lemma}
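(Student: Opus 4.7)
The plan is to treat this as an interpolation problem on the projective line $\R P^1$. Let $\phi_k := A_k \cdots A_1$ act on $\R P^1$ by projective isomorphisms, and fix a smooth monotone path $u\colon [0,1] \to S^1$ of unit vectors with $u(0)=w$ and $u(1)=v$. For any partition $0 = t_0 < t_1 < \cdots < t_l = 1$ (to be chosen below), I would define $R_k$ to be the rotation of $\R^2$ sending the direction $\phi_k(u(t_{k-1}))$ to the direction $\phi_k(u(t_k))$. A telescoping computation then gives
\[
R_l A_l \cdots R_1 A_1 \,\R\langle w \rangle = \phi_l(u(t_l)) = A_l \cdots A_1\,\R\langle v \rangle,
\]
so the identity in the statement is automatic, and the rotation angle of $R_k$ equals $\theta_k := \varangle(\phi_k(u(t_{k-1})), \phi_k(u(t_k)))$. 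The task reduces to exhibiting a partition for which $\theta_k < \kappa$ for every $k$, provided $l$ is sufficiently large in terms of $K$ and $\kappa$.

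For the quantitative estimates, I would invoke the classical cocycle identity
\[
\sin \varangle(A u_1, A u_2) = \frac{|\det A|}{\|A u_1\|\,\|A u_2\|}\sin \varangle(u_1, u_2),
\]
applied with $A = \phi_k$, which shows that the derivative of $\phi_k$ on $\R P^1$ at a direction $\R\langle u\rangle$ is $|\det \phi_k|/\|\phi_k u\|^2$. Since each $\phi_k$ is monotone on $\R P^1$, the curve $\psi_k := \phi_k \circ u$ is a simple arc of projective length at most $\pi/2$. Changing variables to track the partition on the final arc $\psi_l$, the problem rephrases as follows: partition the arc from $A_l\cdots A_1 \R\langle w\rangle$ to $A_l\cdots A_1 \R\langle v\rangle$ by points $s_0, s_1, \dots, s_l$ in $\R P^1$ so that
\[
\varangle\bigl((A_l\cdots A_{k+1})^{-1}(s_{k-1}),\ (A_l\cdots A_{k+1})^{-1}(s_k)\bigr) < \kappa
\]
for every $k$. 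The hypothesis $\|A_l\cdots A_1 v\|\geq \tfrac{1}{2}\|A_l\cdots A_1 w\|$ enters here: combined with the cocycle identity, it ensures that $\|\phi_l v\|$ and $\|\phi_l w\|$ are comparable, so the singular values of the successive pull-back maps $(A_l\cdots A_{k+1})^{-1}$ restricted to the target arc remain compatible across steps.

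The main obstacle is this simultaneous partition problem, because the pull-back maps $(A_l\cdots A_{k+1})^{-1}$ on $\R P^1$ can have derivative as large as $K^{4(l-k)}$, so no uniform subdivision of $[0,1]$ suffices. To overcome it, I would construct the partition inductively, working backwards from $s_l = A_l\cdots A_1 \R\langle v\rangle$ toward $s_0 = A_l\cdots A_1 \R\langle w\rangle$: at each step $k$, I would choose $s_{k-1}$ as the farthest point on the target arc from $s_k$ (toward $A_l\cdots A_1 \R\langle w\rangle$) for which $(A_l\cdots A_{k+1})^{-1}(s_{k-1})$ stays within projective distance $\kappa$ of $(A_l\cdots A_{k+1})^{-1}(s_k)$. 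Using the cocycle identity together with $\|A_j\|,\|A_j^{-1}\|\leq K$ to estimate the minimum length of the target arc covered at each step, one checks that the total length covered after $l$ backward steps exceeds $\pi/2$ once $l$ is taken large enough as a function of $K$ and $\kappa$; hence the backward induction reaches $s_0$ in at most $l$ steps (filling any remaining rotations with the identity), which completes the proof.
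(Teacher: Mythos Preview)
The paper does not give its own proof of this lemma; it simply states the result and refers the reader to \cite[Appendix A]{Potrie}. So there is no in-paper argument to compare against, and your proposal must be evaluated on its own merits.

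Your setup is correct and is the standard strategy for this type of statement: the telescoping identity
\[
R_lA_l\cdots R_1A_1\,\R\langle w\rangle=\phi_l(u(t_l))=\phi_l(v)
\]
works exactly as you describe, and the reduction to controlling the projective angles $\theta_k=\varangle(\phi_k(u(t_{k-1})),\phi_k(u(t_k)))$ is the right reformulation. The cocycle identity you quote is also the right tool.

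The gap is in the final paragraph. You assert that the greedy backward construction covers the entire target arc once $l$ is large enough, but the naive estimate does not give this. Using only $\|A_j\|,\|A_j^{-1}\|\le K$, the projective derivative of $A_l\cdots A_{k+1}$ is bounded below only by $K^{-4(l-k)}$, so the length of target arc covered at backward step $k$ is a priori only $\gtrsim \kappa\,K^{-4(l-k)}$. Summing over $k$ gives a geometric series with bound $\kappa/(1-K^{-4})$, which is independent of $l$ and can be much smaller than the length of the target arc. So ``one checks'' hides exactly the nontrivial content of the lemma.

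What is missing is a genuine use of the hypothesis $\|\phi_l v\|\ge\tfrac12\|\phi_l w\|$ beyond the remark that $\|\phi_l v\|$ and $\|\phi_l w\|$ are comparable. One must choose the path $u$ through the expanding singular direction of $\phi_l$, so that $\|\phi_l(u(t))\|\ge\tfrac12\|\phi_l w\|$ along the whole path; then the cocycle identity gives control on the projective derivative of $\phi_l$ \emph{along the path} (not merely at the endpoints), and this is what feeds into a lower bound on the length covered at each step that actually grows with $l$. Without making this step explicit, the argument is incomplete.
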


Let $\mathcal{U}$ and $\mathrm{U}$ given by Lemma \ref{domination-open} in the first step. We now prove the following.

\medskip

\noindent
\textbf{Claim 1}: There exists $l\geq 1$ such that for every $(x_i)_i \in \Lambda_g$ there exists an integer $j:=j((x_i)_i), 1 \leq j \leq l,$ such that,
\begin{align}\label{eqclaim1}
\|Dg^j\mid_{E_g(x_0)}\|\leq \frac{1}{2}\|Dg^j\mid_{F_g(x_0)}\|.
\end{align}

\begin{proof}[Proof of Claim 1]
For every $g \in \mathcal{U},$ either $\mathrm{Cr}(g)$ is empty or $\mathrm{Cr}(g)$ is contained in $\mathrm{U}$. Then, there are $K\geq \max\{\|Dg_y\|,\|Dg^{-1}_y\|\}$ uniformly in  $M\backslash \mathrm{U},$ for every $g \in \mathcal{U},$ and $\kappa>0$ small enough such that any rotation $R$ of angle less than $\kappa$ satisfies: $$\|R\circ Dg -Dg\|<\varepsilon, \ \ \text{for all} \ \ g \in \mathcal{U}.$$
Fix $l_1\geq 1$ as in Lemma \ref{Potriethesis}. Suppose now that  (\ref{eqclaim1}) does not hold. In particular, for $l \geq l_1$ there exists $(x_i)_i \in \Lambda_g$ such that
\begin{align}
\|Dg^j\mid_{E_g(x_0)}\|\geq \frac{1}{2}\|Dg^j\mid_{F_g(x_0)}\|.
\end{align}
for every $1\leq j \leq l$. By Lemma \ref{domination-open}, we have that $x_0, x_1, \dots, x_{l-1} \notin \mathrm{U}$. And so, for each $1 \leq j \leq l,$ $A_j=Dg_{x_{j-1}}$  verifies Lemma \ref{Potriethesis}. Consequently, there exist rotations $R_1,\dots,R_{l}$ so that:
$$ \|R_jA_j-A_j\|<\varepsilon, \,  1 \leq j \leq l; \ \ \text{and} \ \ R_lA_l\cdots R_1A_1(F_g(x))=E_g(g^l(x)).$$
By Lemma \ref{Franks-conseq} and the assumption \eqref{eq-assumption}, we get a contradiction. This concludes the proof of Claim 1.
\end{proof}

Fix $l_1\geq 1$ given by Lemma \ref{Potriethesis}. It satisfies Claim 1, that is, for every $g \in \mathcal{U}$ whose $\Lambda_g$ is nonempty such  that for each $(x_i)_i \in \Lambda_g$, there exists an integer $j_0:=j((x_i)_i), \, 1\leq j \leq l_1$, verifying:
\begin{align}\label{eq-key}
\|Dg^{j_0}\mid_{E_g(x_0)}\|\leq \frac{1}{2}\|Dg^{j_0}\mid_{F_g(x_0)}\|.
\end{align}

Finally, in order to conclude the proof of the  Lemma \ref{domination-openess} (Uniform domination property) remains to show the following assertion.

\medskip
\noindent
\textbf{Claim 2}: There exists $\ell \geq 1$ such that for every $(x_i)_i \in \Lambda_g$,
\begin{align*} 
\|Dg^{\ell}\mid_{E_g(x_0)}\|\leq \frac{1}{2}\|Dg^{\ell}\mid_{F_g(x_0)}\|.
\end{align*}

We emphasize that $\ell$ is uniform in $\mathcal{U}$. 

\begin{proof}[Proof of Claim 2]
Assume $\ell$ greater than $l_1$. Let $j_0=j((x_i)_i),\, 1 \leq 1 \leq l_1,$ such that:
\begin{align*}
\|Dg^{\ell}\mid_{E_g(x_0)}\|&=\|Dg^{\ell-j_0}\mid_{E_g(x_{j_0})}\|\|Dg^{j_0}\mid_{E_g(x_0)}\|\\
&\leq \frac{1}{2}\|Dg^{\ell-j_0}\mid_{E_g(x_{j_0})}\|
\|Dg^{j_0}\mid_{F_g(x_0)}\|.
\end{align*}
Then, repeating $r$-times the process, we obtain $L_{r}=\ell-\sum_{i=0}^{r-1} j_i$ with $1\leq L_r\leq l_1$ such that:
\begin{align*}
\|Dg^{\ell}\mid_{E_g(x_0)}\| &\leq  \biggl(\frac{1}{2}\biggr)^r\|Dg^{L_r}\mid_{E_g(x_{j_{r-1}})}\|
\|Dg^{\ell-L_r}\mid_{F_g(x_0)}\| \\
& \leq  \biggl(\frac{1}{2}\biggr)^r C_0\|Dg^{\ell}\mid_{F_g(x_0)}\|,
\end{align*}
where $C_0$ is such that:
\begin{align}\label{c0}
 C_0\geq \frac{\max_{x \in M}\{\|Dg^i_x\|:i=1,2, \dots ,l_1\}}
 {\min\{2\nu, \min_{\|v\|=1}\{\|Dg(v)\|^{l_1}:x\in M\backslash \mathrm{U}\} \}}
\end{align}
with $\nu>0$ given by Lemma \ref{constant-c0}.

\begin{rmk}\label{contstan-rmk}
The constant $C_0$ is well-defined, since $F_g(x) \subseteq \mathscr{C}_{V_g}^{\ast}(x,\eta)$ in $\mathrm{U}$ and by the fact that $\min_{\|v\|=1}\|Dg_x(v)\|$ is positive in $M\backslash \mathrm{U}$. In particular, we can take $C_0$ uniform for the neighborhood  $\mathcal{U}$.
\end{rmk}

Therefore, taking $\ell\geq 1$ large enough such that $(1/2)^r C_0 \leq 1/2$, we have for each $(x_i)_i \in \Lambda$ that:

\smallskip
$\bullet$ for $\tau^{+}_0 \geq \ell$, 
$$\|Dg^{\ell}\mid_{E_g(x_0)}\|\leq \frac{1}{2}\|Dg^{\ell}\mid_{F_g(x_0)}\|;$$

$\bullet$ for $\tau^{+}_0 < \ell, \ \ \|Dg^{\ell}\mid_{E_g(x_0)}\|=0$. In particular,
$$\|Dg^{\ell}\mid_{E_g(x_0)}\|=0 \leq \frac{1}{2}\|Dg^{\ell}\mid_{F_g(x_0)}\|.$$
This concludes the proof of Claim 2 and, consequently, the Lemma \ref{domination-openess} (Uniform domination property). 
\end{proof}

\section{Consequences of Main Theorem}\label{sec:main-thm-consequences}

In this section we introduce an auxiliary result that will be used for proving Theorem \ref{thm-B} stated in Section \cref{intro}. Both results are proved in this section. Recalling that Theorem \ref{thm-B} asserts  that the torus and the Klein bottle are the only surfaces that admit robustly transitive endomorphisms. Concretely,

\begin{thm}\label{thm-DS}
If $f \in \mathrm{End}^1(M)$ is a robustly transitive endomorphism displaying critical points, then $M_f$ admits a dominated splitting for $f$.
\end{thm}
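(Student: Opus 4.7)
The plan is to reduce Theorem \ref{thm-DS} to the \hyperref[main-theo]{Main Theorem} by a $C^1$-perturbation argument, using the \hyperref[key obstruction]{Key Lemma} to rule out full-dimensional kernels and then transferring the dominated splitting back to $f$ via the openness of the cone-field condition. Throughout the proof I fix a $C^1$-neighborhood $\mathcal{U}$ of $f$ whose elements are all transitive, as provided by robust transitivity.

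As a first step I would constrain the kernel dimension inside $\mathcal{U}$. Indeed, if $|\ker(Dg_x^n)|=2$ for some $g$ in the interior of $\mathcal{U}$, some $x \in M$, and some $n \ge 1$, then $Dg_x^n = 0$ and the Franks-type perturbation used in the proof of the \hyperref[key obstruction]{Key Lemma} produces $\tilde g \in \mathcal{U}$ whose $n$-th iterate is constant on an open set, contradicting transitivity of $\tilde g$. Hence $|\ker(Dg^n)| \le 1$ for every such $g$ and every $n \ge 1$; in particular this bound holds at $f$ itself, and because $f$ displays critical points we have $|\ker(Df^n)|=1$ for every $n \ge 1$.

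Second, the \hyperref[main-theo]{Main Theorem} also requires $\mathrm{int}(\mathrm{Cr}(\cdot)) \neq \emptyset$, a hypothesis not automatic for $f$. I would therefore perform a $C^1$-small local perturbation of $f$ near one of its critical points to produce $g \in \mathcal{U}$ whose image on a small open set is a one-dimensional curve (for instance, by straightening coordinates and replacing one coordinate of $f$ by a constant on that open set). The resulting $g$ satisfies $\mathrm{int}(\mathrm{Cr}(g))\neq \emptyset$, lies in $\mathcal{U}$ (hence is transitive), and, by the first step, has $|\ker(Dg^n)|=1$ for every $n$. This is precisely the content of the forthcoming Lemma~\ref{typically}. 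Applying the \hyperref[main-theo]{Main Theorem} to $g$ yields the dichotomy that either $g$ admits a dominated splitting on $M_g$ or $g$ is approximated by endomorphisms with full-dimensional kernel; the second alternative is ruled out by the first step applied to those approximations, all of which may be assumed to lie in $\mathcal{U}$. Hence $g$ carries a dominated splitting on $M_g$.

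Finally I would transfer the splitting from $g$ back to $f$ using the openness of dominated splittings recorded in the Remark following Proposition \ref{cone-criterion}: the invariant cone-field transversal to the kernel that witnesses the dominated splitting of $g$ remains invariant and transversal to the kernel throughout a $C^1$-neighborhood of $g$. If the perturbation of the second step is chosen small enough so that $f$ lies in this neighborhood, Proposition \ref{cone-criterion} converts the cone-field back into a dominated splitting $T M_f = E \oplus F$ for $f$, which is the desired conclusion. The principal difficulty is the second step, namely the simultaneous task of enforcing $\mathrm{int}(\mathrm{Cr}(g)) \neq \emptyset$ while preserving $|\ker(Dg^n)|=1$ by a perturbation inside the robust transitivity neighborhood $\mathcal{U}$; once this perturbation (Lemma~\ref{typically}) is available, the remainder of the argument is a formal composition of the \hyperref[key obstruction]{Key Lemma}, the \hyperref[main-theo]{Main Theorem}, and the openness of dominated splittings.
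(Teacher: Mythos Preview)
Your first two steps match the paper's approach exactly, but the final transfer step contains a genuine circularity that the paper handles differently. You propose to pick the perturbation $g$ close enough to $f$ that $f$ lies in the openness neighborhood of $g$'s dominated splitting. The problem is that the radius of this openness neighborhood depends on $g$'s domination constants (the iterate $\ell$ and the margin by which $Dg^\ell$ carries the cone strictly inside itself), and these are produced by the \hyperref[main-theo]{Main Theorem} applied to $g$ \emph{after} $g$ has been chosen. Nothing prevents the openness radius from shrinking faster than the distance $d_{C^1}(g,f)$ as $g\to f$, so the clause ``if the perturbation is chosen small enough'' is not justified. This is precisely the step you label ``formal'', and it is not.

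The paper avoids this circularity by appealing to Lemma~\ref{domination-openess} (Uniform domination property), which is proved directly for a neighborhood of $f$ under assumption \eqref{eq-assumption} \emph{without} assuming $\Lambda_f\neq\emptyset$. That lemma supplies a single $\ell$ and a single angle bound $\alpha$ valid for every $g$ in a fixed neighborhood of $f$ whose lambda-set is nonempty. One then takes a \emph{sequence} $f_n\to f$ in $\mathcal D$ (via Lemma~\ref{typically}), obtains dominated splittings $E_n\oplus F_n$ with these uniform constants, and passes to the limit of the associated cone-fields; uniformity is exactly what makes the limit cone-field strictly invariant for $f$. So the missing ingredient in your argument is not the construction of a single good perturbation $g$, but the uniform control of domination constants across all such perturbations, which the paper supplies in Section~\cref{uniform-DP}.
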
 
 

Before proving the theorem above, we prove Theorem \ref{thm-B}.

\subsection{Proof of Theorem B}\label{pf-thm-B}
Assuming Theorem \ref{thm-DS}, denote by $E\oplus F$ the dominated splitting over $M_f$ for $f$. Thus, by Remark \ref{proj-E}, we have that $E$ induces a continuous subbundle on $TM$ over $M$, by slight abuse of notation it is denoted by $E$ as well. Let $(\widetilde{M}, p, p^{\ast}(E))$ be the double covering of $E$ over $M$. Hence, since the subbundle $p^{\ast}(E)$ of $T\widetilde{M}$ is orientable, we can define a vector field $X:\widetilde{M} \to T\widetilde{M}$ such that $X(x)\not=0 \in p^{\ast}(E)$. Therefore, one gets that $\chi(\widetilde{M})=0$, and so, $\chi(M)=0$. Thus, $M$ is either the torus $\T^2$ or  the Klein bottle $\K^2$. This complete the proof of Theorem B.\qed


\medskip

We now prove Theorem \ref{thm-DS}.

\subsection{Proof of Theorem $\ref{thm-DS}$}
Let $\mathcal{U}_f \subseteq \mathrm{End}^1(M)$ be a neighborhood of $f$ such that every $g \in \mathcal{U}_f$ is transitive. By 
the \hyperref[key obstruction]{Key Lemma}  stated in Section~\cref{intro}, we have that every $g \in \mathcal{U}_f$ satisfies the assumption \eqref{eq-assumption}, recalling that this assumption says that $|\ker(Dg^n)|\leq 1$, for all $g \in \mathcal{U}_f$ and $n\geq 1$. 


Define $\mathcal{D}$ as the subset of $\mathcal{U}_f$ given by,
\begin{align}
\mathcal{D}=\{g \in \mathcal{U}_f: \mathrm{int}(\mathrm{Cr}(g))\neq \emptyset\}.
\end{align}

Observe that since every $g \in \mathcal{D}$ is transitive and $\mathrm{int}(\mathrm{Cr}(g))\neq \emptyset$, 
then $\Lambda_g$ is dense in $M_g$, recall \eqref{Lambda-set} in Section \cref{intro} and \cref{construction-lambda}.
\begin{lemma}\label{typically}
For any $g \in \mathcal{U}_f$ whose $\mathrm{Cr}(g)\neq \emptyset$ and any neighborhood $\mathcal{U} \subseteq \mathcal{U}_f$ of $g$, holds that $\mathcal{D}\cap \mathcal{U}\neq \emptyset$. In particular, $\mathcal{D}$ contains a family of endomorphisms converging to $f$.
\end{lemma}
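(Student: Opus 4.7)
My approach is to use Franks' Lemma to perturb $g$ near a single critical point $x$, replacing $g$ on a small ball by the affine map whose linear part is exactly $Dg_x$. Since this linear part has rank exactly one, the perturbed map will automatically have an entire ball of critical points, which is all we need.

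First, I would pick any $x \in \mathrm{Cr}(g)$. The paragraph preceding the statement already records that every $h \in \mathcal{U}_f$ satisfies assumption~\eqref{eq-assumption}, so $|\ker Dg_x| \leq 1$; combined with $x \in \mathrm{Cr}(g)$, this forces $|\ker Dg_x| = 1$, and $Dg_x$ has rank one. Next, I would apply \hyperref[Franks-lemma]{Franks' Lemma} to the neighborhood $\mathcal{U}$, the singleton $\Sigma = \{x\}$, and the linear map $L_0 = Dg_x$ itself. The tolerance condition $\|L_0 - Dg_x\| = 0 < \varepsilon$ is trivially satisfied, so I obtain $\tilde{g} \in \mathcal{U}$ together with a small ball $B_0$ centered at $x$ such that $\tilde{g}(x) = g(x)$, $\tilde{g}$ coincides with $g$ off a neighborhood of $B_0$, and $\tilde{g}|_{B_0} = L_0$. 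In particular $D\tilde{g}_y = Dg_x$ for every $y \in B_0$, and so $\ker D\tilde{g}_y = \ker Dg_x$ is one-dimensional for every such $y$. Hence $B_0 \subseteq \mathrm{Cr}(\tilde{g})$, giving $\mathrm{int}(\mathrm{Cr}(\tilde{g})) \neq \emptyset$ and $\tilde{g} \in \mathcal{D} \cap \mathcal{U}$.

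For the ``in particular'' assertion, I would apply the above with $g = f$ (which lies in $\mathcal{U}_f$ and has $\mathrm{Cr}(f) \neq \emptyset$ because $f$ displays critical points by hypothesis) to an exhausting sequence of shrinking neighborhoods $\mathcal{U}_n$ of $f$, producing a sequence $\tilde{f}_n \in \mathcal{D}$ that converges to $f$ in the $C^1$-topology.

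The only a priori subtlety is consistency: the constructed $\tilde{g}$ has a whole ball $B_0 \subseteq \mathrm{Cr}(\tilde{g})$, and one might worry that some forward iterate $D\tilde{g}^n$ acquires a two-dimensional kernel at some point of $B_0$, which would push $\tilde{g}$ outside $\mathcal{U}_f$ and potentially break the construction. However, no separate verification is needed: since $\tilde{g} \in \mathcal{U} \subseteq \mathcal{U}_f$ by construction and every element of $\mathcal{U}_f$ satisfies~\eqref{eq-assumption}, one automatically has $|\ker D\tilde{g}^n| \leq 1$ for every $n \geq 1$. Thus the main apparent ``obstacle'' dissolves into the observation that membership in $\mathcal{U}_f$ is inherited by the perturbation, and the argument is self-contained.
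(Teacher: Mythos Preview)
Your proof is correct and follows essentially the same route as the paper: pick a critical point, apply Franks' Lemma with $L_0 = Dg_x$ to make the map affine on a small ball, and observe that every point of that ball becomes critical. The only minor difference is that the paper packages the construction as a sequence $(g_n)_n$ with shrinking supports $B_n$, whereas you produce a single $\tilde g$ in the given neighborhood and then iterate over shrinking neighborhoods for the ``in particular'' clause; this is cosmetic. Your discussion of the rank of $Dg_x$ and of the consistency with~\eqref{eq-assumption} is not needed for the argument (all that matters is that $Dg_x$ fails to be an isomorphism), but it does no harm.
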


\begin{proof}
Suppose, without loss of generality, that $\mathrm{int}(\mathrm{Cr}(g))$ is empty and let $p$ in $\mathrm{Cr}(g)$. By \hyperref[Franks-lemma]{Franks' Lemma}, we can consider two sequences, the first one $(B_n)_n$ of neighborhoods of $p$ and the other one $(g_n)_n$ in $\mathcal{U}$ such that for all $n\geq 1$ holds:
\begin{enumerate}
 \item[-]  $\mathrm{int}(\mathrm{Cr}(g_n))\subseteq B_n$, $g_n(x)= Dg_p$ for $x\in \mathrm{int}(\mathrm{Cr}(g_n))$, and the diameter of $B_n$ goes to zero as $n$ goes to infinity;
 \item[-] $g_n$ converges to $g$ in ${\rm{End}}^1(M)$, and $\mathrm{int}(\mathrm{Cr}(g_n))\neq \emptyset$ for every $n$;
 \item[-] $g_n \mid_{M\backslash B_n}=g$ and $g_n(p)=g(p)$.
\end{enumerate}
This proves the lemma.
\end{proof}

Assume, up to changing the neighborhood of $f$, $\mathcal{U}_f$ satisfies 
the Lemma \ref{domination-openess} (Uniform domination property), recall Section~\cref{sec:main-thm}. Considering the family $(f_n)_n$ in $\mathcal{D}$ converging to $f$
given by Lemma \ref{typically}, the \hyperref[main-theo]{Main Theorem} implies that each $f_n$ admits a dominated splitting $E_n\oplus F_n$ verifying:
\begin{align}
\begin{split}
\exists\, \alpha >0, \,& \ell \geq 1 \ \ \text{such that} \ \ \forall (x_i)_i \in M_{f_n}, i \in \mathbb{Z},\, \text{and} \, \, n \geq 1, \\ &
\varangle(E_n(x_i),F_n(x_i))>\alpha \ \ \text{and} \ \ E_n\prec_{\ell} F_n.
\end{split}
\end{align}
On the other hand, by Lemma \ref{cone-criterion}, we can find a family of cone-fields $\mathscr{C}_{E_n}$ of uniform angle (length) such that $$Df_n^{\ell}(\mathscr{C}_{E_n}^{\ast}(x))\subseteq \mathrm{int}(\mathscr{C}_{E_n}^{\ast}(x)) \ \ \text{and} \ \ T_xM=E_n(x) \oplus \mathscr{C}_{E_n}^{\ast}(x),$$
for all $x \in M$. Furthermore, the uniqueness of the domination property implies that the following limits are well-defined,
\begin{align}
E(x)=\lim E_n(x) \ \ \text{and} \ \ \mathscr{C}_{E}^{\ast}(x)=\lim \mathscr{C}_{E_n}^{\ast}(x).
\end{align} 
And so, we have that for every $x \in M$,
$$Df^{\ell}(\mathscr{C}_{E}^{\ast}(x))\subseteq \mathrm{int}(\mathscr{C}_{E}^{\ast}(x)) \ \ \text{and} \ \ T_xM=E(x) \oplus \mathscr{C}_{E}^{\ast}(x).$$
Hence completes the proof of Theorem \ref{thm-DS}.
\qed

\section{Homotopy classes}\label{homotopy}


This section is devoted to the proof of Theorem \ref{thm-C}, Corollaries \ref{cor1} and \ref{cor2}, stated in   Section \cref{intro}. Let us recall using a revisited version of Theorem \ref{thm-C}.

\begin{thm-C} 
If $f \in \mathrm{End}^1(M)$ is a transitive endomorphism admitting a dominated splitting. Then, the action of $f$ on the first homology group has at least one eigenvalue with modulus larger than one.
\end{thm-C}

Throughout this section, we denote by $E$ a $Df$-invariant continuous subbundle of $TM$ and $\mathscr{C}$ a  continuous invariant cone-field on $M$ transverse to $E$, recalling that if $x \in \mathrm{Cr}(f)$ then $E(x)=\ker(Df_x)$. 
To prove Theorem \ref{thm-C},  we first show that the length of the iterates of an arc $\gamma$ tangent to the cone-field $\mathscr{C}$ by $f$ grows exponentially. Afterward, 
we find a relation between the $\gamma$ iterates growth and the volume growth of the balls containing the iterates of $\gamma$, allowing us  to conclude the proof of Theorem \ref{thm-C}. Furthermore, the exponential growth of the iterates by $f$ of a tangent arc to the cone-field is the first step in order to find an expanding direction for $f.$ This is proved in Section \cref{sec:exp direction}, recalling that one of our goals is to prove that $f$ is partially hyperbolic, hence completing the proof of Theorem \ref{thm-A}.


\subsection{Topological expanding direction}
Let us fix some notion that will be used along this section.
An {\it{u-arc}} is an injective Lipschitz curve $\gamma:[0,1]\to M$ such that $\gamma' \subseteq \mathscr{C}$, where $\gamma'$ denotes the set of  tangent vectors to $\gamma$. In other words, all tangent vectors to $\gamma$ are contained in the cone and the Lipschitz constant of $\gamma$ is uniform bounded by the length of the cone.
We denote by $\ell(\gamma)$ the length of $\gamma$.

\begin{thm}\label{thm-u-arc}
For every u-arc $\gamma$, the length of $f^n(\gamma)$ grows exponentially.
\end{thm}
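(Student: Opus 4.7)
The plan is to argue by contradiction, adapting to the non-invertible setting the Pujals--Sambarino scheme of producing a normally attracting limit object out of a u-arc with sub-exponential length growth. Suppose, for contradiction, that some u-arc $\gamma$ satisfies
\[
\liminf_{n\to\infty}\frac{1}{n}\log\ell(f^{n}(\gamma))=0.
\]
Since every tangent vector to $f^{n}(\gamma)$ lies in $\mathscr{C}$, and $\mathscr{C}$ is transverse to $\ker Df$ by $[\mathrm{PH1}]$, the tangential derivative never vanishes; its logarithm defines an additive cocycle $a_{i}(x)=\log\|Df|_{T_{f^{i}(x)}(f^{i}\gamma)}\|$. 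First I would apply a Pliss-type selection to $a_{i}$: the sub-exponential assumption produces a subsequence of times $n_{k}\to\infty$ and points $x_{k}\in f^{n_{k}}(\gamma)$ for which
\[
\prod_{i=0}^{j-1}\|Df|_{T_{f^{i}(x_{k})}(f^{n_{k}+i}\gamma)}\|\le C\mu^{j}\qquad\text{for every }j\ge 0,
\]
with $\mu>1$ arbitrarily close to $1$ and $C$ independent of $k,j$.

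Second, using the uniform Lipschitz bound on u-arcs (which follows from the aperture of $\mathscr{C}$) together with the compactness of $M$, I would take a Hausdorff limit, along $n_{k}$, of bounded-length u-sub-arcs of $f^{n_{k}}(\gamma)$ centered at $x_{k}$. Because $\mathscr{C}$ is closed and transverse to every $\ker Df^{n}$, the limit is again a u-arc $\gamma_{\infty}$; moreover the Pliss estimate passes to the limit, yielding $\ell(f^{n}(\gamma_{\infty}))\le C\mu^{n}$ for every $n\ge 0$. In particular the forward orbit of $\gamma_{\infty}$ lives in a compact set of bounded diameter, and its tangential derivative is controlled sub-exponentially along every point.

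Third, I would combine this tangential non-growth with the domination property $\|Df^{\ell}|_{E}\|\le\tfrac{1}{2}\|Df^{\ell}|_{F}\|$ to conclude that the extremal bundle $E$ is uniformly contracted along the orbit of every point of $\gamma_{\infty}$: there exist $\ell\ge 1$ and $\sigma\in (0,1)$ with $\|Df^{n\ell}|_{E(f^{j}(y))}\|\le \sigma^{n}$ for all $y\in\gamma_{\infty}$ and $j\ge 0$. A Hadamard--Perron type stable-manifold construction in the cone-field setup (which only uses forward iterates and is therefore insensitive to non-invertibility) then provides a small tubular neighborhood of $\Omega:=\overline{\bigcup_{n\ge 0}f^{n}(\gamma_{\infty})}$ that is forward $f$-invariant and contracts exponentially onto $\Omega$ in the $E$-direction. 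This produces a proper open forward-invariant subset of $M$, contradicting transitivity.

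The main obstacle will be the limiting step: because $\gamma$ may visit $\mathrm{Cr}(f)$ arbitrarily often and $f$ is not invertible, I have to rule out that $\gamma_{\infty}$ collapses to a point and that the tangent directions of its forward iterates are absorbed by some $\ker Df^{n}$. This is exactly where the full strength of $[\mathrm{PH1}]$ -- the cone field being transverse to \emph{every} iterate of $\ker Df$ -- is indispensable, together with the fact that the class of u-arcs of uniformly bounded length is closed under Hausdorff limits. All the subsequent estimates, including the uniform contraction of $E$ along $\gamma_{\infty}$ and the construction of the trapping tube, are robust bookkeeping once the limit u-arc is secured.
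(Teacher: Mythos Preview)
Your overall strategy follows the Pujals--Sambarino blueprint and is close in spirit to the paper's: produce from the failure of exponential growth a special u-arc, show $E$ contracts along it, and extract a normally attracting object contradicting transitivity. However, your implementation has two concrete gaps. First, the Pliss selection you describe is a \emph{pointwise} estimate: you obtain points $x_k$ on iterates of $\gamma$ with $\prod_{i=0}^{j-1}\|Df|_{T_{f^i(x_k)}(f^{n_k+i}\gamma)}\|\le C\mu^{j}$. This does \emph{not} control the length of any sub-arc centred at $x_k$, since $\ell(f^{j}(\beta))=\int_{\beta}\|Df^{j}|_{T\beta}\|$ depends on all points of $\beta$ and there is no a~priori distortion bound along u-arcs. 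So the assertion ``the Pliss estimate passes to the limit, yielding $\ell(f^{n}(\gamma_\infty))\le C\mu^{n}$'' is unjustified. Second, even granting that bound, you take $\mu>1$, so the lengths are unbounded and the claim that the forward orbit of $\gamma_\infty$ lives in a set of bounded diameter is false; without boundedness, building a forward-invariant tubular neighbourhood of $\Omega=\overline{\bigcup_{n}f^{n}(\gamma_\infty)}$ is not at all automatic.

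The paper sidesteps both issues by working from the outset with a \emph{$\delta$-u-arc}: a u-arc with $\ell(f^{n}(\gamma))\le\delta$ for \emph{all} $n\ge 0$. The reduction uses no Pliss lemma: if exponential growth fails, then a uniform doubling time fails, and Arzel\`a--Ascoli produces a genuine $\delta$-u-arc (Lemma~\ref{growing-curve}). For such an arc the paper shows $E$ is uniformly contracted (Lemma~\ref{lema-PS}), builds unique local stable leaves (Lemma~\ref{lemma-stable-mfld}), and then carries out a careful $\omega$-limit analysis (Theorem~\ref{PS}) proving the limit set is a normally attracting periodic closed curve, arc, or point --- each contradicting transitivity. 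Your ``trapping tube'' step glosses over precisely this last piece; the structure of $\Omega$ is where the real work lies, and bounded diameter of the forward orbit is exactly what makes it tractable.
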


The proof  of the theorem above is an adaptation of \cite{Pujals-Sambarino} in our context. For completeness we present here the details. 

\begin{defi}
We say that an u-arc $\gamma$ is a $\delta$-$u$-arc provided the next condition holds:
\begin{align}
\ell(f^n(\gamma))\leq \delta,\ \  \text{for every} \ \ n\geq 0.
\end{align}
\end{defi}
In other words, a $\delta$-$u$-arc is an {\it{u-arc}} such  that the length of the forward iterates remain bounded.

\medskip

The main idea for proving Theorem \ref{thm-u-arc} is to ensure that there is no $\delta$-$u$-arc. Hence, we will study some consequences in case this kind of arcs exist. The first one is the following.

\begin{lemma}\label{lema-PS}
There exist $ 0<\lambda<1,\, \delta>0, \, C>0$, and $n_0\geq 1$ such that given $\delta$-$u$-arc $\gamma$ and $x \in f^{n_0}(\gamma)$ one has that:
\begin{align}\label{stable-direction}
\|Df^j\mid_{E(x)}\|<C\lambda^j, \ \
\text{for every} \ \ j\geq 1.
\end{align}
\end{lemma}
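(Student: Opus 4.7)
The plan is to adapt the argument of Pujals--Sambarino \cite{Pujals-Sambarino} to the cone-field setting with critical points. By Proposition~\ref{cone-criterion}, the invariance of $\mathscr{C}$ transverse to $E$ yields a dominated splitting $E\oplus F$ for $f$ with $F\subset \mathscr{C}$ and an integer $\ell_0\geq 1$ for which $E\prec_{\ell_0}F$. For a u-arc $\gamma$ we have $\gamma'(y)\in \mathscr{C}(y)$; set $\rho_n(y):=\|Df^n_y\gamma'(y)\|/\|\gamma'(y)\|$. The $\delta$-u-arc hypothesis translates into the $L^1$-bound
$$\int_\gamma \rho_n(y)\,ds \;=\; \ell(f^n(\gamma)) \;\leq\; \delta \qquad \text{for every } n\geq 0.$$

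The first step is a bounded distortion estimate for $\rho_n$: show the existence of a constant $K>0$ independent of $n$ and of the arc such that $\rho_n(y)\leq K\rho_n(y')$ for all $y,y'\in \gamma$ and $n\geq 0$. The argument compares the norms $\|Df_{f^i(y)}|_{\mathscr{C}}\|$ and $\|Df_{f^i(y')}|_{\mathscr{C}}\|$ along orbits, using uniform continuity of $Df$ restricted to vectors in the cone (so that the norm stays bounded away from zero even near $\mathrm{Cr}(f)$ thanks to transversality of $\mathscr{C}$ to $\ker Df$) together with the control $d(f^i(y),f^i(y'))\leq \ell(f^i(\gamma))\leq \delta$ along the entire orbit. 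Passing if necessary to sufficiently short subarcs of $\gamma$, so that the modulus-of-continuity series controlling the accumulated distortion converges, we combine the distortion bound with the $L^1$-bound to obtain a uniform pointwise estimate $\rho_n(y)\leq M$ for every $y\in f^{n_0}(\gamma)$ and every $n\geq 0$. Here $n_0\geq 1$ is chosen so that $\ell(f^{n_0}(\gamma))$ is bounded below by a constant depending only on $f$; if along some iterate the length of the arc is already very small, then $\rho_{n_0}$ is itself small and the estimate below is trivially improved.

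The second step converts the bound on the cone direction into exponential decay on $E$ via the domination. Iterating $E\prec_{\ell_0}F$ along the orbit of $x\in f^{n_0}(\gamma)$ and using that $F$ is forward-invariant, the cocycle identity yields
$$\|Df^{k\ell_0}|_{E(x)}\|\;\leq\; 2^{-k}\,\|Df^{k\ell_0}|_{F(x)}\|\;\leq\; 2^{-k}\,M \qquad (k\geq 0).$$
Setting $\lambda:=2^{-1/\ell_0}<1$ and choosing $C>0$ large enough to absorb $M$ together with the remainder of the Euclidean division of $j$ by $\ell_0$, we conclude $\|Df^j|_{E(x)}\|<C\lambda^j$ for every $j\geq 1$, which is the desired estimate.

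The main obstacle is the first step, and more specifically the uniformity of $M$ across different $\delta$-u-arcs. In the classical $C^2$ surface-diffeomorphism context of \cite{Pujals-Sambarino} the distortion estimate is produced by Hölder continuity of $Df$; here, working only in $C^1$ and in the presence of critical points, one must exploit the quantitative transversality of $F$ with $\ker Df$ to keep $\|Df|_F\|$ bounded away from zero in a neighborhood of $\mathrm{Cr}(f)$, and restrict to sufficiently short subarcs so that the distortion series along orbits converges. The compactness argument providing the uniform lower bound on $\ell(f^{n_0}(\gamma))$, or the complementary case in which this length collapses, must be handled with care to preserve uniformity of the constants $C, \lambda, n_0$ across all $\delta$-u-arcs.
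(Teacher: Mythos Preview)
Your overall architecture --- bound the growth of $\rho_n$ along the cone direction, then feed this into the domination $E\prec_{\ell_0}F$ --- is exactly the one the paper uses. The gap is in your Step~1: the uniform-in-$n$ distortion estimate $\rho_n(y)\leq K\rho_n(y')$ is not available in $C^1$. To obtain it you would need $\sum_{i\geq 0}\omega\big(d(f^i(y),f^i(y'))\big)<\infty$ for a modulus of continuity $\omega$ of $\log\|Df|_{\mathscr{C}}\|$, but the $\delta$-$u$-arc hypothesis only gives $d(f^i(y),f^i(y'))\leq \delta$, which is bounded, not summable. Passing to shorter subarcs does not help: the future iterates of the subarc are still only bounded by $\delta$. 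So the conclusion $\rho_n\leq M$ uniformly in $n$ cannot be reached this way, and your Step~2 as written, $\|Df^{k\ell_0}|_{F(x)}\|\leq M$, is unsupported.

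The paper's way around this is to abandon the uniform bound and settle for a sub-exponential one. Continuity of $Df$ on cone vectors gives a \emph{per-step} comparison: for any small $a>0$ there is $\delta_0>0$ such that if $d(x,y)<2\delta_0$ and $v,w$ are close cone vectors then $\|Df_x(v)\|\geq(1-a)\|Df_y(w)\|$. This yields only the weak, exponentially deteriorating distortion $\rho_n(y)\geq (1-a)^n\rho_n(y')$. Now argue by contradiction: if $\rho_{k_j}(t_j)>(1+\beta)^{k_j}$ along a subsequence, then $\rho_{k_j}(t)\geq((1-a)(1+\beta))^{k_j}$ for \emph{every} $t$, forcing $\ell(f^{k_j}(\gamma))\to\infty$ once $a,\beta$ are chosen with $(1-a)(1+\beta)>1$. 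Hence $\rho_k\leq(1+\beta)^k$ for all large $k$, and then domination gives $\|Df^{k\ell}|_{E(x)}\|\leq 2^{-k}(1+\beta)^{k\ell}$, which is exponentially small provided $\beta$ is taken small enough that $(1+\beta)^{\ell}<2$. The role of $n_0$ in the paper is not to bound $\ell(f^{n_0}(\gamma))$ from below (that need not hold for a $\delta$-$u$-arc) but to narrow the cone $Df^{n_0}(\mathscr{C})$ so that the per-step angle comparison above becomes available.
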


\begin{proof}
By the domination property, there exists $\ell \geq 1$ such that
\begin{align}\label{cone-u-arc}
\|Df^{\ell}\mid_{E(x)}\|\leq \frac{1}{2}\|Df^{\ell}(v)\|,
\end{align}
for all $v \in \mathscr{C}(x)$ with $ \|v\|=1,$ and  $x \in M$. By continuity, given a small $a>0$ there are $\delta_0, \theta_1 >0$ such that for every $x, y$ with
$d(x,y)<2\delta_0$ and $v \in \mathscr{C}(x), w \in \mathscr{C}(y)$, with $\varangle(w,v)<\theta_1$ \footnote{The angle between $v$ and $w$ is calculated using the local identification $TM\mid_U=U\times \R^2$.}, follows that:
\begin{itemize}
\item[-] $\|Df_x(v)\|\geq (1-a)\|Df_y(w)\|$; 
\item[-] $\|Df\mid_{E(y)}\|,\|Df\mid_{E(x)}\|< a$, if $x,y \in B(\mathrm{Cr}(f),\delta_0)$; and,
\item[-] $\|Df\mid_{E(x)}\|\leq (1+a) \|Df\mid_{E(y)}\|$, if $y \notin B(\mathrm{Cr}(f),\delta_0)$,
\end{itemize}
where $B(\mathrm{Cr}(f),\delta_0)=\{x \in M: d(x,\mathrm{Cr}(f))<\delta_0\}$, recalling that $\mathrm{Cr}(f)$ is the set of the critical points.

\medskip

Since $\mathscr{C}$ is a continuous $Df$-invariant  cone-field, we fix $0<\delta<\delta_0$ and $n_0\geq 1$ large enough so that for every $x,y \in M$ with $d(x,y)\leq \delta$ and $v \in f^{n_0}(\mathscr{C}(x)),\, w \in f^{n_0}(\mathscr{C}(y))$, we have that $\varangle(w,v)<\theta_1$. Thus, taking $\beta>0$ so that $1<(1-a)(1+\beta)<2$, one can obtain for every $\delta$-$u$-arc $\gamma$ and $t \in [0,1]$ that:
\begin{align}\label{u-arc}
\|Df^k\mid_{\R\langle \gamma'_{n_0}(t)\rangle}\|\leq (1+\beta)^k
\end{align}
for $k$ sufficiently large and $\gamma_n(t)=f^n(\gamma(t))$.

Indeed, assume that $\gamma:=\gamma_{n_0}$ is parametrized by the arc  length. Suppose, by contradiction, that there exists a
sequence $(k_j)_{j}$ going to infinity as $j$ goes to infinity such that:
\begin{align*}
\|Df^{k_j}\mid_{\R\langle \gamma'(t_j)\rangle}\|>(1+\beta)^{k_j}.
\end{align*}

Since $\ell(\gamma_n)\leq \delta$, one has that $d(\gamma_n(t),\gamma_n(s))\leq \delta$ for every $t,s\in [0,1]$. Thus, for every $t\in [0,1]$, 
\begin{align*}
\|Df^{k_j}(\gamma'(t))\|(1-a)^{k_j}\|Df^{k_j}(\gamma'(t_{j}))\|\geq ((1-a)(1+\beta))^{k_j}.
\end{align*}
In particular, we would have that $\ell(f^{k_j}(\gamma))\geq ((1-a)(1+\beta))^{k_j}\ell(\gamma)$, contradicting that  $\gamma$ is a $\delta$-$u$-arc. Hence, the equation \eqref{u-arc} holds.

Finally, choosing $\beta > 0$ such that $1<((1-a)(1+\beta))^{\ell}<2$ follows that for $k\geq 1$ large enough,
\begin{align}
\|Df^{k\ell}\mid_{E(x)}\|\leq \lambda^{k\ell}, \ \ \text{for all} \ \ x \in \gamma,
\end{align}
where $\lambda \in (0,1)$ is chosen so that $((1-a)(1+\beta))^{\ell}/2<\lambda^{\ell} <1$. 
This shows that equation \eqref{stable-direction}
holds.
\end{proof}

Note that $\lambda \in (0,1)$ above can be taken uniformly for every $\delta$-$u$-arc with $0<\delta<\delta_0$. 
From now on, we fix $0<\lambda<1$ and $0<\delta<\delta_0$ as in Lemma \ref{lema-PS} and $\lambda'>0$ such that $(1+a)\lambda<\lambda'<1$. Up to replacing by iterates, we assume for the next result that $\gamma$ satisfies  Lemma \ref{lema-PS}.
The following result provides the existence of the local stable manifold for each point belonging to a $\delta$-$u$-arc $\gamma$.

\begin{lemma}\label{lemma-stable-mfld}
There exists $\alpha>0$ such that for every $x \in \gamma$, there is a unique curve $\xi_x:(-\alpha,\alpha)\to M$ orientation  preserving satisfying:
\begin{equation}\label{eq-solution}
\left\{\begin{array}{ll}
\xi_x'(t) \in E(\xi(t)) \ \ \text{with} \ \ \|\xi_x'(t)\|=1;\\
\xi_x(0)=x.
\end{array}\right.
\end{equation}
\end{lemma}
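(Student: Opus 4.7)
The plan is to construct $\xi_x$ as a local stable manifold of the forward orbit of $x$, exploiting the exponential contraction of $E$ along $\gamma$ established in Lemma \ref{lema-PS}. Since $\|Df^j\mid_{E(x)}\| \leq C\lambda^j$ uniformly for $x \in \gamma$ and $j \geq 1$, while the domination property prevents contraction of vectors in the transverse cone $\mathscr{C}$, the forward orbit of each $x \in \gamma$ satisfies the hyperbolic hypothesis required by a Hadamard-type stable manifold theorem.

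Concretely, I would set up local charts along the orbit $(f^n(x))_{n \geq 0}$ in which $E$ is nearly horizontal and $\mathscr{C}$ is nearly vertical, and run a graph transform on the space of $C^0$-horizontal graphs of slope at most $\eta$ and length at most $\alpha$ through $x$: given such a graph $\Gamma$ at $x$, one follows the graph at $f(x)$ obtained by iterating and then pulls it back to $x$ under $f$. The exponential contraction in $E$ makes this pull-back a contraction of the graph space, and its unique fixed point is the desired curve $\xi_x$. The uniform size $\alpha > 0$ depends only on $C,\lambda$, the cone width, and uniform bounds on $\|Df\|$, all of which are already uniform along $\gamma$. Tangency to $E$ at every point of $\xi_x$ (and not only at $x$) then follows from the $Df$-invariance of $E$ together with domination: any tangent vector $v \in T_y\xi_x$ has its forward iterates contracting exponentially because $f^n(\xi_x)$ shrinks, so $v$ cannot have a nontrivial component in the transverse cone $\mathscr{C}^\ast(y)$, forcing $v \in E(y)$. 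Reparametrizing $\xi_x$ by arc length in the chosen orientation yields the map $(-\alpha,\alpha) \to M$ prescribed by \eqref{eq-solution}.

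For uniqueness, any other solution $\tilde\xi$ of \eqref{eq-solution} through $x$ is tangent to $E$ at each of its points, so its forward iterates $f^n(\tilde\xi)$ are again tangent to $E$ and have length contracting like $\lambda^n$; in particular $\tilde\xi$ remains within the region where the graph transform is defined, and the contraction of the graph transform then forces $\tilde\xi = \xi_x$. The main obstacle is that $E$ is only continuous, not Lipschitz, so the classical Cauchy--Lipschitz uniqueness for the ODE $\xi'\in E(\xi)$ is unavailable; uniqueness has to be extracted dynamically from the contracting graph transform, which is precisely what the exponential contraction on $E$ supplied by Lemma \ref{lema-PS} enables.
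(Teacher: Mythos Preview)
Your approach is sound in outline but takes a genuinely different route from the paper. The paper gets existence for free from Peano's theorem (which needs only continuity of $E$) and then proves uniqueness by a short topological argument that \emph{uses the transitivity of $f$}: if two distinct solutions $\xi_0,\xi_1$ of \eqref{eq-solution} through $x$ existed, they would bound (together with the set of endpoints $D_\alpha$) a region $\mathcal{Q}$ every point of which lies on some solution; the contraction estimate of Lemma~\ref{lema-PS}, propagated by the continuity inequalities in its proof, forces $\mathrm{diam}(f^n(\mathcal{Q}))\to 0$, and transitivity then yields an $n$ with $f^n(\mathrm{int}(\mathcal{Q}))\subset \mathcal{Q}$, contradicting transitivity. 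Your strong-stable-manifold construction dispenses with transitivity entirely, which is more general but costs more machinery; the paper's argument is essentially one paragraph and self-contained.

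Two places in your sketch need tightening. First, ``pull it back to $x$ under $f$'' is ambiguous for a non-invertible $f$ with critical points; you should say explicitly that the graph transform inverts only the $F$-component (legitimate because $\ker Df\subset E$ keeps $Df|_F$ bounded away from zero), or switch to the Perron sequence-space formulation. Second, your tangency argument at $y\in\xi_x$ does not close as written: exponential contraction of $Df^n(v)$ only excludes $v$ from the unstable cone $\mathscr{C}(y)$, it does not pin $v$ to the line $E(y)$ (and your reference to $\mathscr{C}^{\ast}$ appears inverted). The clean fix is to note that the contraction estimate of Lemma~\ref{lema-PS} persists at every $y\in\xi_x$ (since $d(f^n(y),f^n(x))\to 0$), run the same graph transform at $y$, and invoke coherence $W^{ss}(y)=W^{ss}(x)$ of strong stable manifolds to conclude $T_y\xi_x=E(y)$.
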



\begin{proof}
Peano's Theorem guarantee the existence of an interval $I=(-\alpha_0,\alpha_0)$ where there exists at least one solution defined on it. Suppose by contradiction that $\xi_0,\xi_1:I \to M$ are two different solutions of \eqref{eq-solution}. For $0<\alpha<\alpha_0$, consider 
the set $D_{\alpha}$
 of all points $y \in M$ such that there exists a solution $\xi_x$ of \eqref{eq-solution} with $\xi_x(\alpha)=y$. It is easy to see that 
$D_{\alpha}$
is  a connected compact set in $M$. 
Then, by the proof of Lemma \ref{lema-PS},
\begin{align*}
&\|Df\mid_{E(y)}\|,\|Df\mid_{E(x)}\|< a,\, \text{if} \ \ x,y \in B(\mathrm{Cr}(f),\delta_0); \ \ \text{and},\\
&\|Df\mid_{E(x)}\|\leq (1+a) \|Df\mid_{E(y)}\|,\, \text{if}  \ y \notin B(\mathrm{Cr}(f),\delta_0).
\end{align*}

Denote by $\mathcal{Q}$ the region bounded by $\xi_0,\xi_1$ and 
$D_{\alpha}$ 
 as in  Figure \ref{curve-cone}.  
\begin{figure}[!h]
\centering
\includegraphics[scale=0.9]{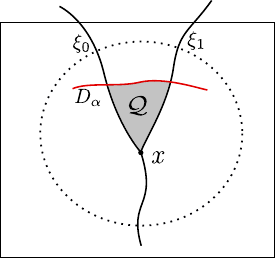}
\caption{Bounded region $\mathcal{Q}$. }\label{curve-cone}
\end{figure}
Since for all $y \in \mathcal{Q}$ there exists  a solution $\xi_x$ of \eqref{eq-solution} with $y=\xi_x(t)$ for some $0<t<\alpha$, one has that
\begin{align}\label{diam-Q}
\begin{split}
d(f^n(y),f^n(x))\leq \ell(f^n\circ\xi_x)\mid_{0}^t &\leq \|Df^n\mid_{E(x)}\|(1+a)^n\ell(\xi_x)\\
&\leq C\lambda^n(1+a)^n \alpha \leq C \lambda'^n\alpha.
\end{split}
\end{align}
In other words, the diameter of the set $f^{n}(\mathcal{Q})$ goes to zero as $n$ goes to infinity. Thus, by transitivity of $f$, we can choose $n\geq 1$ such that the diameter of $f^n({\rm{int}(\mathcal{Q})})$ is small enough so that  $f^n({\rm{int}(\mathcal{Q})})\subseteq \mathcal{Q}$ which contradicts the transitivity.
\end{proof}


Finally, for $\varepsilon >0$ small enough, denote  the set $\{\xi_x(t): t \in (-\varepsilon,\varepsilon)\}$ by $W^{s}_{\varepsilon}(x)$.
In particular,
\begin{align*}
y \in W^{s}_{\varepsilon}(x) \ \ \Longrightarrow \ \ 
d(f^n(x),f^n(y)) \to 0, \,\, \text{as} \,\, n \to +\infty.
\end{align*}

\begin{rmk}
It should be noted that $\alpha_0>0$ can be taken uniform in Peano's Theorem. In particular, fixed $\delta_0>0$ and $0<\alpha<\alpha_0$, we have that for $\delta$-$u$-arc $\gamma$ with $0<\delta<\delta_0$, the size of the local stable manifold can be taken uniform.
By uniqueness of the solution of the equation \eqref{eq-solution} and the invariance of $E$ by $Df$, we have that for $n\geq 1$ large enough,
\begin{align}\label{eq-stable}
f^n(W^s_{\varepsilon}(x))\subseteq W^s_{\varepsilon}(f^n(x)), \forall x \in \gamma.
\end{align}
\end{rmk}

Let us call by {\it box} the following open set,
\begin{align}
W^{s}_{\varepsilon}(\gamma)=\bigcup_{x \in \gamma} W^s_{\varepsilon}(x).
\end{align}

The next result characterizes the dynamic of a $\delta$-$u$-arc and ensures that its existence  is an obstruction for transitivity. The proof is  inspired in \cite[Theorem 3.1]{Pujals-Sambarino} and gives a characterization of the $\omega$-limit of a $\delta$-$u$-arc $\gamma$, denoted by $\omega(\gamma)$. For completeness we present the  proof adapted to our setting as follows.

\begin{thm}\label{PS}

If $\gamma$ is a $\delta$-$u$-arc with $0<\delta \leq \delta_0$, then one of the following properties holds:
\begin{enumerate}[label=$\arabic*.$]
\item $\omega(\gamma) \subseteq \tilde{\beta}$, where $\tilde{\beta}$ is a periodic
simple closed curve normally attracting.\\

\item There exists a normally attracting periodic arc $\tilde{\beta}$ such that $\gamma \subseteq W^s_{\varepsilon}(\tilde{\beta})$.\\

\item $\omega(\gamma) \subseteq {\rm{Per}}(f)$, where ${\rm{Per}}(f)$ is the set of the periodic points of f. Moreover,
one of the periodic points is either a semi-attracting periodic point or an attracting one (i.e.,
the set of  points $y \in M$ such that $d(f^n(p),f^n(y)) \to 0$ contains an open set in $M$).
\end{enumerate}
\end{thm}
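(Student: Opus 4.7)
My plan is to pass to Hausdorff limits of the iterates $\gamma_n := f^n(\gamma)$, classify these limits topologically, and then use the local stable manifolds from Lemma \ref{lemma-stable-mfld} to promote each limit into one of the three attracting pictures (1)--(3).

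First, since each $\gamma_n$ is a u-arc with $\ell(\gamma_n)\leq \delta$ and uniform Lipschitz constant bounded by the aperture of the cone field, the sequence $(\gamma_n)_n$ lies in a compact family by Arzel\`a-Ascoli. Passing to a subsequence, $\gamma_{n_k}$ converges in the Hausdorff metric to a compact u-object $\tilde{\beta}\subseteq \omega(\gamma)$, which is either a single point, a non-degenerate u-arc, or a simple closed u-curve, with $\ell(\tilde{\beta})\leq \delta$; the tangent-in-cone condition prevents branching or self-intersections in the limit because each $\gamma_n$ is an injective Lipschitz curve with cone-tangent. A further pigeonhole extraction within this compact space of u-arcs yields an integer $p\geq 1$ with $f^p(\tilde{\beta})=\tilde{\beta}$, so the limit is $f^p$-invariant.

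Next, I would split into cases based on the topology of $\tilde{\beta}$. If $\tilde{\beta}$ is a simple closed curve, it is a periodic closed u-curve; the local stable leaves $W^s_\varepsilon(x)$ for $x\in\tilde{\beta}$ glue into a tubular neighborhood, and the contraction estimate from Lemma \ref{lema-PS} together with \eqref{eq-stable} provides normal attraction, giving case (1). If $\tilde{\beta}$ is a non-degenerate arc, then $f^p$ permutes its two endpoints, so $\tilde{\beta}$ is a periodic arc; again the open box $W^s_\varepsilon(\tilde{\beta})$ is normally attracting by the same estimates, which will give case (2) once $\gamma\subseteq W^s_\varepsilon(\tilde{\beta})$ is established. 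If $\tilde{\beta}$ collapses to a single point $p_0$, then $p_0\in\mathrm{Per}(f)$; the fact that u-arcs $\gamma_{n_k}$ shrink in Hausdorff distance to $\{p_0\}$ forces $\ell(\gamma_{n_k})\to 0$, since cone-tangent arcs cannot oscillate, and so $Df^{n_k p}$ contracts the cone direction at $p_0$ as well as the stable direction, making $p_0$ attracting or semi-attracting and yielding case (3).

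The main obstacle will be the upgrade in case (2) from ``$\omega(\gamma)\subseteq \tilde{\beta}$'' to ``$\gamma\subseteq W^s_\varepsilon(\tilde{\beta})$''. To handle this I would run a holonomy argument along stable leaves: Hausdorff convergence $\gamma_{n_k}\to\tilde{\beta}$ combined with the transversality between $\gamma'_{n_k}$ (in the cone) and the stable leaves (tangent to $E$) shows that for large $k$ each $\gamma_{n_k}$ is a graph over $\tilde{\beta}$ along stable leaves, hence contained in $W^s_\varepsilon(\tilde{\beta})$; pulling back by $f^{n_k}$ and using invariance of the stable foliation \eqref{eq-stable} then places $\gamma$ in a stable box around a preimage of $\tilde{\beta}$, which by the periodicity $f^p(\tilde{\beta})=\tilde{\beta}$ matches $W^s_\varepsilon(\tilde{\beta})$ after a bounded number of iterates. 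Particular care will be needed at the endpoints of $\tilde{\beta}$ in case (2), where the stable box is only one-sided, and in verifying that the attracting/semi-attracting dichotomy in case (3) is correctly detected by the non-expanding behavior in the cone direction at $p_0$.
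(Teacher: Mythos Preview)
There is a genuine gap at the step where you produce periodicity. You write that ``a further pigeonhole extraction within this compact space of u-arcs yields an integer $p\geq 1$ with $f^p(\tilde{\beta})=\tilde{\beta}$.'' Compactness of the space of short $u$-arcs only gives recurrence: some $f^{p}(\tilde\beta)$ is Hausdorff-close to $\tilde\beta$. It does not give the equality $f^{p}(\tilde\beta)=\tilde\beta$, and there is no evident rigidity or contraction on the space of $u$-arcs that would upgrade ``close'' to ``equal.'' The same issue reappears in your case (3): from $\gamma_{n_k}\to\{p_0\}$ along a subsequence you cannot conclude $p_0\in\mathrm{Per}(f)$, nor that $\omega(\gamma)$ is a single periodic orbit, without controlling the full sequence $(\gamma_n)_n$. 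And in your case (1) there is a topological mismatch: a Hausdorff limit of short injective $u$-arcs of length $\leq\delta$ is an arc or a point, never a simple closed curve, so your scheme cannot produce alternative (1) at all.

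The missing ingredient is the transitivity of $f$, which you never invoke. The paper uses transitivity to obtain an $n_0$ with $W^s_\varepsilon(\gamma)\cap W^s_\varepsilon(\gamma_{n_0})\neq\emptyset$; this stable-box linkage then propagates to all blocks $\gamma_{kn_0}$. That is what forces, in the shrinking case, the limit to be independent of the subsequence (hence a genuine periodic orbit, giving (3)), and in the non-shrinking case allows one to take a subsequential limit arc $\beta$ and then define $\tilde\beta:=\bigcup_{k\geq 0}f^{kn_0}(\beta)$. This $\tilde\beta$ is only forward invariant, $f^{n_0}(\tilde\beta)\subseteq\tilde\beta$, not periodic in your strict sense; one then proves it is a $1$-manifold (arc or simple closed curve) by a local argument again based on the stable linkage. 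Your holonomy/graph idea for placing iterates of $\gamma$ inside $W^s_\varepsilon(\tilde\beta)$ is fine in spirit, but it only becomes available after this linkage has been established.
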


\begin{proof}
Let $\gamma_n:=f^n(\gamma)$. By transitivity of $f$, there exists $n_0\geq 1$ large enough verifying the equation \eqref{eq-stable} and
\begin{align}\label{intesection}
W^{s}_{\varepsilon}(\gamma)\cap W^s_{\varepsilon}(\gamma_{n_0})\not=\emptyset.
\end{align}
Consequently, $W^{s}_{\varepsilon}(\gamma_{_{(k-1)n_0}})\cap W^s_{\varepsilon}(\gamma_{_{kn_0}})\not=\emptyset$.

\medskip

If $\ell(\gamma_{_{kn_0}})$ goes to zero as $k$ goes to infinity, then $\omega(\gamma)$ consist of a periodic orbit.

\medskip

Indeed, if $\ell(\gamma_{kn_0}) \to 0$,  then $\ell(\gamma_n) \to 0$ as $n \to \infty$. Let $p$ be an accumulation point of $\gamma_{kn_0}$. That is, there exist a subsequence $(k_j)_j$
and $x \in \gamma$ such that $f^{k_jn_0}(x)\to p$. In particular, as $\ell(\gamma_n)\to 0$, one has
$\gamma_{_{k_jn_0}} \to p$ as $j \to \infty$, and by
\eqref{intesection}, it follows that the limit is independent of the subsequence $(k_j)_j$, and so, we have $\gamma_{_{kn_0}} \to p$ as $k \to \infty$. Hence, $\gamma_{_{kn_0+r}} \to f^{r}(p)$ for $0\leq r \leq n_0-1$, implying that $p$ is a periodic point. Thus, for every $x \in \gamma$ we have that $\omega(x)$ consist only of the periodic orbit of $p$. This proves item ($3$).

\medskip

On the other hand, if $\lim_{k\to +\infty}\ell(\gamma_{_{kn_0}})\geq c> 0$, then there exists a subsequence $(k_j)_j$ such that $\gamma_{_{k_j n_0}} \to \beta$, where $\beta$ is an arc which is at least $C^1$ and
tangent to $\mathscr{C}^{\ast}_E$, since
\begin{equation*}
\gamma'(t^-)=\dis\lim_{s \to 0 (s<0)}\frac{\gamma(t+s)-\gamma(t)}{s} \ \ \text{and} \ \
\gamma'(t^+)=\dis\lim_{s\to 0 (s>0)}\frac{\gamma(t+s)-\gamma(t)}{s}
\end{equation*}
belong to $\mathscr{C}^{\ast}_E$, and so
$\dis\lim_{j\to \infty}Df^{k_jn_0}(\gamma'(t^-)) =\dis\lim_{j\to \infty}Df^{k_jn_0}(\gamma'(t^+))$.

Observe that $f^{n_0}(\beta)$ is the limit of $f^{k_jn_0}(\gamma_{n_0})$ and, by \eqref{intesection}, $\beta \cup f^{n_0}(\beta)$ is a $C^1$-curve.
Let
\begin{align}\label{beta}
\tilde{\beta}=\bigcup_{k\geq 0}f^{kn_0}(\beta).
\end{align}

Let us prove that there are two possibilities:
either $\tilde{\beta}$ is an arc or a simple closed curve. 

\medskip

In order to prove it, first note that for every $k\geq 0$ the curve $f^{kn_0}(\beta)$ is a $\delta$-$u$-arc. In particular, for each $x \in \tilde{\beta}$
there exists $\varepsilon(x)>0$ such that $W^s_{\varepsilon(x)}(x)$ is the local stable manifold for $x$. Thus, the set
\begin{align*}
W^s(\tilde{\beta})=\bigcup_{x \in \tilde{\beta}} W^s_{\varepsilon(x)}(x)
\end{align*}
is a neighborhood of $\tilde{\beta}$.

Finally, we show that given $x \in \tilde{\beta}$ there exists a neighborhood $B(x)$ of $x$ in $M$ such that $B(x)\cap \tilde{\beta}$ is an arc which implies that $\tilde{\beta}$ is a simple closed curve or an interval.

Let $x \in \tilde{\beta}.$ Note that $x \in f^{k_1n_0}(\beta)$ for some $k_1\geq 0$ by \eqref{beta}. Taking $I$ an open interval in $f^{k_1n_0}(\beta)$ containing $x$ and  $B(x)$ neighborhood of $x$ with $\mathrm{diam}(B(x))<c/2$ such that
\begin{align*}
B(x) \subseteq W^s(\tilde{\beta}) \ \ \text{and} \ \ B(x)\cap \beta_1 \subseteq I,
\end{align*}
where $\beta_1$ is an interval containing $f^{k_1n_0}(\beta)$ with $\ell(\beta_1)\geq c/2$.

\medskip

Now, we claim that for every $y \in \tilde{\beta}\cap B(x)$, one has that $y \in I$.

\medskip

Indeed, assume, without loss of generality, that $y \in f^{k_2n_0}(\beta)$. Since
\begin{align*}
f^{k_{\ast}n_0}(\beta)=\dis\lim_{j\to \infty} f^{k_jn_0+k_{\ast}n_0}(\gamma) \ \ \text{for} \ \ \ast=1,2,
\end{align*}
and as both have nonempty intersection with $B(x)$, we conclude that, for some $j$, $f^{k_jn_0+k_1n_0}(\gamma)$ and $f^{k_jn_0+k_2n_0}(\gamma)$ are linked by a local stable manifold. Hence $f^{k_1n_0}(\beta)\cap f^{k_2n_0}(\beta)$ is an arc  $\beta'$ tangent to $\mathscr{C}^{\ast}_E$. Therefore,  $y \in B(x)\cap \beta' \subseteq \beta_1$. This completes the proof that $\tilde{\beta}$ is an arc or a simple closed curve. Furthermore, since $f^{n_0}(\tilde{\beta})\subseteq \tilde{\beta}$, it follows that for every $x \in \gamma,\, \omega(x)$ is the $\omega$-limit of a point in $\tilde{\beta}$, hence item (1) or item (2) holds, completing the proof of the theorem.
\end{proof}

As a consequence we have the following result.

\begin{cor}\label{corPS}
There is not $\delta$-$u$-arc provided $\delta$ small.
\end{cor}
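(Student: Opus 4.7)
The plan is to argue by contradiction, invoking Theorem~\ref{PS} and showing that each of its three conclusions is incompatible with the transitivity of $f$. Fix $\delta\leq \delta_0$ small enough so that the local stable manifolds constructed in Lemma~\ref{lemma-stable-mfld} have uniform size, and suppose, for a contradiction, that a $\delta$-$u$-arc $\gamma$ exists. Apply Theorem~\ref{PS} to $\gamma$.

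In each of the three cases, I would extract an open set $B\subseteq M$ that is forward invariant under $f$ and whose forward orbit accumulates on a lower-dimensional compact invariant set. In case $(1)$, the normally attracting periodic simple closed curve $\tilde{\beta}$ admits, by the uniform transversal contraction along $E$ (Lemma~\ref{lema-PS}), an open tubular neighborhood $B$ such that $f^{n_0}(\overline{B})\subseteq B$ and $\bigcap_n f^{n n_0}(B)=\tilde{\beta}$; in case $(2)$, the normally attracting periodic arc plays the same role, since $\gamma\subseteq W^s_\varepsilon(\tilde{\beta})$ and the stable lamination $W^s_\varepsilon(\tilde{\beta})$ has nonempty interior whenever $\tilde{\beta}$ is itself an arc transverse to $E$ and the stable leaves have uniform size; in case $(3)$, the existence of a (semi-)attracting periodic point $p$ produces, by definition, an open set of points $y$ with $d(f^n(p),f^n(y))\to 0$, and this open set is forward invariant up to an iterate.

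The contradiction now follows from transitivity. Choose a point $z\in M$ whose forward orbit is dense in $M$; such a point exists since $f$ is transitive. Then some iterate $f^k(z)$ lies in the open forward invariant set $B$, so $f^{k+n}(z)\in B$ for all $n\geq 0$, and moreover $d(f^{k+n}(z), K)\to 0$ where $K\in\{\tilde{\beta},\mathrm{orb}(p)\}$ is the lower-dimensional attractor. The closure of the forward orbit of $z$ is thus contained in $\{z,f(z),\dots,f^{k-1}(z)\}\cup \overline{B}$, and its accumulation set is contained in $K$. Since $K$ is a finite union of points, arcs, or closed curves on a surface, it has empty interior, contradicting density of the forward orbit of $z$ in $M$.

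The main (and essentially only) obstacle is to verify carefully that in each of the three cases of Theorem~\ref{PS} the attractor really does come with an open forward-invariant basin; this is routine once one uses Lemma~\ref{lema-PS} to know that the contraction rate along $E$ is uniform and dominates a rate strictly less than $1$, together with the uniform size of local stable manifolds from Lemma~\ref{lemma-stable-mfld}. Once this is in place, the final step is a single line: a dense forward orbit cannot be trapped in such a basin.
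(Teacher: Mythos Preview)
Your proposal is correct and follows essentially the same approach as the paper: invoke Theorem~\ref{PS}, note that each of its three alternatives produces a proper attractor (a normally attracting closed curve, arc, or (semi-)attracting periodic point), and observe that any such attractor is incompatible with the transitivity of $f$. The paper's own proof is a two-line version of exactly this argument; you have simply fleshed out the details of why the basin exists and why it obstructs a dense forward orbit.
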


\begin{proof}
From Theorem \ref{PS} follows that the $\omega$-limit of a $\delta$-$u$-arc  is either a periodic simple closed curve normally attracting, or a semi-attracting periodic point, or
there exists a normally attracting periodic arc. In any case, it contradicts the fact that $f$ is transitive.
\end{proof}

\begin{lemma}\label{growing-curve}
For  $\delta>0$ small enough, there exists $n_0 \geq 1$ such that for every $u$-arc $\gamma$ with $\delta/2 \leq \ell(\gamma) \leq \delta$, one has that  $\ell(f^{n}(\gamma))\geq 2\delta$ for some $0\leq n \leq n_0$.
\end{lemma}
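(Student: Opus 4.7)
The plan is to argue by contradiction using the compactness of the space of u-arcs of bounded length together with the non-existence of $\delta$-u-arcs given by Corollary~\ref{corPS}. Suppose the conclusion fails for arbitrarily small $\delta$: then for every integer $n_0 \geq 1$ one can pick a u-arc $\gamma_{n_0}$ with $\delta/2 \leq \ell(\gamma_{n_0}) \leq \delta$ whose first $n_0$ iterates all have length strictly less than $2\delta$. Parametrising each $\gamma_{n_0}\colon [0,1]\to M$ at constant speed makes the family equi-Lipschitz with constant at most $\delta$, so by Arzelà--Ascoli and compactness of $M$ I can extract a subsequence $\gamma_{n_j}$ converging uniformly to a Lipschitz curve $\gamma_\infty\colon[0,1]\to M$.

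Next I would verify that $\gamma_\infty$ (or an appropriate sub-arc of it) is a u-arc. Since the cone-field $\mathscr{C}$ is closed and varies continuously on $M$, the almost-everywhere tangent vectors of $\gamma_\infty$ still lie in $\mathscr{C}$. Using the uniform transversality between $\mathscr{C}$ and the subbundle $E$, each $\gamma_{n_j}$ is locally a graph over a fixed transverse direction with uniformly bounded slope, and the same is true for the uniform limit; in particular $\gamma_\infty$ is injective on some interval. Finally, lower semicontinuity of length under uniform convergence of Lipschitz curves yields
\[
\delta/2 \leq \liminf_{j\to\infty}\ell(\gamma_{n_j}) \leq \ell(\gamma_\infty)\leq\delta,
\]
so after restricting to an injective sub-arc I still have a u-arc of positive length bounded below by $\delta/2$ (up to an arbitrarily small loss).

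For each fixed $k \geq 0$, continuity of $f^k$ gives $f^k(\gamma_{n_j}) \to f^k(\gamma_\infty)$ uniformly; restricting to indices with $n_j \geq k$ and invoking lower semicontinuity of length once more yields
\[
\ell(f^k(\gamma_\infty)) \leq \liminf_{j\to\infty} \ell(f^k(\gamma_{n_j})) \leq 2\delta,
\]
for every $k\geq 0$. Thus $\gamma_\infty$ is a nontrivial $2\delta$-u-arc in the sense introduced before Lemma~\ref{lema-PS}. Choosing $\delta>0$ at the outset so that $2\delta \leq \delta_0$, where $\delta_0$ is as in Lemma~\ref{lema-PS}, the existence of $\gamma_\infty$ contradicts Corollary~\ref{corPS}. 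The step I expect to require the most care is the verification that the limit is a genuine (injective) u-arc of positive length, since this uses the transversality of $\mathscr{C}$ with $E$ to produce a local graph representation and preserve injectivity under uniform limits; the rest of the argument is a standard compactness-plus-contradiction scheme.
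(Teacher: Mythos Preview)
Your argument is essentially the paper's own proof: contradiction, Arzel\`a--Ascoli on the equi-Lipschitz family of $u$-arcs, and the limiting $2\delta$-$u$-arc contradicting Corollary~\ref{corPS}. One minor correction: lower semicontinuity of length gives $\ell(\gamma_\infty)\leq\liminf_j\ell(\gamma_{n_j})$, not the reverse inequality you wrote, so the lower bound $\ell(\gamma_\infty)\geq\delta/2$ really comes from the transversality/graph argument you sketch (which does work) rather than from semicontinuity.
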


\begin{proof}
Otherwise, we should have a sequence $(\gamma_n)_n$ of $u$-arc so that for each $n\geq 1$, one has that: $$\ell(f^j(\gamma_n))\leq 2\delta \ \ \text{for every} \ \ 1\leq j \leq n.$$

Since $\gamma_n' \subseteq \mathscr{C}$, we have that the Lipschitz constant of $\gamma_n$ is uniformly bounded. In particular, the family $\{\gamma_n\}_n$ is uniformly bounded and equicontinuous. That is,
\begin{itemize}
\item[-]  $d(\gamma_n(t),\gamma_n(0))\leq \delta$ for every $t \in [0,1]$ and $n \geq 1$;
\item[-] $\forall \varepsilon>0, \exists \nu>0$ such that for every  $n\geq 1$,
$$\forall \,t,s \in [0,1],|t-s|<\nu \Longrightarrow d(\gamma_n(t),\gamma_n(s))<\varepsilon.$$
\end{itemize}
Then, by Arzel{\`a}-Ascoli's Theorem, up to taking a subsequence, $\gamma_n$ converges uniformly to
the  $2\delta$-$u$-arc $\gamma$, since $\gamma$ is a Lipschitz curve with $\ell(f^k(\gamma))\leq 2\delta$ and $\gamma' \subseteq \mathscr{C}$, contradicting the Corollary \ref{corPS}.
\end{proof}

Finally, we prove the main result of this section. 

\begin{proof}[Proof of Theorem $\ref{thm-u-arc}$]
Fix $\delta>0$ small enough and $n_0\geq 1$ as in Lemma \ref{growing-curve}.
Note that there exists  $\rho > 0$ such that every $u$-arc $\gamma$ with $\ell(\gamma)$ larger than 
$\delta/2$ verifies that $\ell(f^j(\gamma))\geq \rho$, for every $1\leq j\leq n_0$. Otherwise, there exists a sequence of $u$-arc $\gamma_n$ satisfying $\ell(\gamma_n)\geq \delta/2$ and $\ell(f^{j}(\gamma_n))\!\to\! 0$ as $n \to +\infty$, for some $1\leq j \leq n$. Then, by Arzel{\`a}-Ascoli's Theorem, up to take a subsequence, there exists an $u$-arc $\gamma$ satisfying $\gamma=\lim \gamma_n$ with $\ell(\gamma)\geq \delta/2$ and $\ell(f^j(\gamma))=0$. Therefore, there exists $t \in (0,1)$ such that $\gamma'(t)\in \ker(Df^j)$ which contradicts the fact that $\mathscr{C}$ is transversal to the kernel.

\medskip

Now let us prove that if $\ell(\gamma)\geq \delta$ then $\ell(f^j(\gamma))$ grows exponentially.

\smallskip

By the observation above and Lemma \ref{growing-curve}, there exists $1\leq j_1 \leq n_0$ such that 
$\ell(f^{j_1}(\gamma))\geq 2\delta$. Thus, $f^{j_1}(\gamma)$ can be divided in two $u$-arcs, $\gamma_1$ and $\gamma_2,$ each one with length larger than $\delta$.
Repeating the process, we get that
$\ell(f^{jn_0}(\gamma))\geq 2^j\rho,$ for every $ j\geq 1$, finishing the proof.
\end{proof}


\subsection{Proof of Theorem \ref{thm-C}}
Let us prove the main goal of Section \cref{homotopy}.
Consider the lift of the structure we have on $M$ to its universal cover $\mathbb{R}^2$. Denote by $\widetilde{E}$ and $\widetilde{\mathscr{C}}$, the lifts of the subbundle $E$ and the cone-field $\mathscr{C},$ respectively.
Assume $\widetilde{E}$ orientable. 
 Let $\tilde{f}:\mathbb{R}^2 \to \mathbb{R}^2$ be the lift of $f$. We use tilde to denote the tangent curves  to $\widetilde{\mathscr{C}}$ and points in $\mathbb{R}^2$ as well. 
Consider the following balls centered at a curve $\tilde{\gamma}$ and $\tilde{x}$, respectively,
$$B(\tilde{\gamma},\varepsilon)=\{\tilde{y} \in \R^2: d(\tilde{y},\tilde{\gamma})<\varepsilon\}\ \ \text{and} \ \ B(\tilde{x},\varepsilon)=\{\tilde{y} \in \R^2: d(\tilde{x},\tilde{y})<\varepsilon\}.$$

\begin{lemma}\label{area-curve}
There exist $\varepsilon>0$ and $C>0$ such that for every $\tilde{\gamma}:[0,1] \to \R^2$ of $C^1$ class  with $\tilde{\gamma}'\subseteq \mathscr{C}^{\ast}_{\widetilde{E}}$,
holds
\begin{align}\label{eq-volume}
area(B(\tilde{\gamma},\varepsilon))\geq C \ell(\tilde{\gamma}).
\end{align}
\end{lemma}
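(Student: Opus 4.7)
The plan is to exploit the uniform transversality between $\tilde\gamma'$ and $\widetilde{E}$ by parametrizing a neighborhood of $\tilde\gamma$ via the flow of $\widetilde{E}$ and then estimating its area by a Jacobian computation.

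Since $E$ is a continuous line bundle over the compact surface $M$, it can be straightened locally by smooth charts; by compactness there exist uniform constants $r_0>0$ and $K\geq 1$ such that around every point of $\R^2$ there is a $K$-bi-Lipschitz chart sending $\widetilde{E}$ to the vertical direction $\partial_y$. In such a chart the cone $\mathscr{C}^{\ast}_{\widetilde{E}}$ is mapped into a cone about $\partial_x$ of uniform aperture, hence the image of $\tilde\gamma$ is the graph $y=\phi(x)$ of a Lipschitz function with uniform Lipschitz constant. In particular, the $x$-extent of this graph is at least $c_0\cdot\ell(\tilde\gamma\!\mid_{\text{chart}})$ for a constant $c_0>0$ depending only on the cone.

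Parametrize $\tilde\gamma$ by arc length, $\tilde\gamma:[0,L]\to\R^2$, and let $\Psi_t$ be the unit-speed flow of an orientation of $\widetilde{E}$ (well defined on all of $\R^2$, since $\widetilde{E}$ lifts from the compact surface $M$). Define
$$\Phi:[0,L]\times[-\varepsilon,\varepsilon]\to\R^2,\qquad \Phi(s,t)=\Psi_t(\tilde\gamma(s)),$$
and choose $\varepsilon<r_0/(10K)$. The image of $\Phi$ is contained in $B(\tilde\gamma,K\varepsilon)$, and its Jacobian satisfies $|\mathrm{Jac}\,\Phi(s,t)|\geq c_1>0$ uniformly, because the vectors $\tilde\gamma'(s)$ and $\widetilde{E}(\Phi(s,t))$ make an angle bounded below by some $\theta_1>0$ coming from the cone aperture and the bi-Lipschitz distortion. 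I would then show that $\Phi$ has uniformly bounded multiplicity, i.e.\ there exists $N_0$ such that $\#\Phi^{-1}(\tilde{y})\leq N_0$ for every $\tilde{y}\in\R^2$. Inside each straightening chart this is immediate, because the image of $\tilde\gamma$ is a graph over $\partial_x$, so each $\widetilde{E}$-leaf meets it in at most one point; globally, the bound $N_0$ comes from partitioning $[0,L]$ into consecutive arc-length subintervals of size $\ell_0=r_0/(10K)$, each mapped into a single chart, and counting how many such pieces can have their $\varepsilon$-tubes intersect a common $\widetilde{E}$-segment. Once this is in hand, the change-of-variables formula gives
$$\mathrm{area}(\Phi([0,L]\times[-\varepsilon,\varepsilon]))\geq\frac{1}{N_0}\int_0^L\!\!\int_{-\varepsilon}^{\varepsilon}|\mathrm{Jac}\,\Phi|\,dt\,ds\geq\frac{2\varepsilon c_1}{N_0}\,L,$$
and the inclusion of this image in $B(\tilde\gamma,K\varepsilon)$ yields the stated bound after replacing $\varepsilon$ by $K\varepsilon$.

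The main obstacle is the uniform multiplicity bound for $\Phi$. A priori a Lipschitz curve in $\R^2$ that is transverse to a continuous foliation could wind around and revisit the same $\widetilde{E}$-leaf many times without violating the cone condition, and ruling this out globally is precisely the content of the argument. The key point I expect to use is that the foliation $\widetilde{E}$ descends to a foliation on the compact surface $M$, so any failure of the multiplicity bound would produce a sequence of longer and longer pieces of $\tilde\gamma$ accumulating on the same $\widetilde{E}$-leaf in a small ball, contradicting the graph property of $\tilde\gamma$ in each straightening chart of that ball.
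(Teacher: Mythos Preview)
Your tube-and-Jacobian framework is reasonable, but the step you flag as ``the main obstacle'' is a genuine gap that your sketched resolution does not close. You want a uniform bound $N_0$ on the number of pieces of $\tilde\gamma$ whose $\varepsilon$-tubes can meet a common short $\widetilde E$-segment. Your argument is purely local: you observe that inside each straightening chart a connected piece of $\tilde\gamma$ is a graph over $\partial_x$, so within a single chart a given $\widetilde E$-leaf meets \emph{that piece} at most once. But nothing in this prevents many \emph{disjoint} pieces of $\tilde\gamma$ (each a graph at a different height) from accumulating in the same chart; the graph property is about one connected arc, not about the union of several. Your last sentence (``longer and longer pieces \ldots contradicting the graph property'') conflates these two situations. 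A secondary issue is that $\widetilde E$ is only continuous, so the ``unit-speed flow'' $\Psi_t$ need not be uniquely defined; you would have to work with one Peano solution through each point and check that the resulting $\Phi$ is still useful for the area formula.

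The paper's proof supplies exactly the missing global input, by a Poincar\'e--Bendixson/index argument: if $\tilde\gamma$ re-enters a small ball, one connects the two visits by a short arc tangent to $\widetilde E$ and closes up a disk $D$ whose boundary is partly tangent to $\widetilde E$ and partly transverse to it; the orientable line field $\widetilde E$ then gives a non-vanishing vector field on $D$ with a definite boundary behaviour, which is impossible. This yields the much stronger statement that for some $\varepsilon>0$ every ball $B(\tilde x,\varepsilon)$ meets $\tilde\gamma$ in at most one connected component (i.e.\ $N_0=1$), after which the area estimate follows from a disjoint-ball covering and needs no Jacobian computation at all. If you want to keep your analytic setup, you can plug in precisely this topological lemma to get the multiplicity bound; without it, your argument does not go through.
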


\begin{proof}
First, if  $\tilde{\gamma}$  is such that  $\tilde{\gamma}'\subseteq \mathscr{C}^{\ast}_{\widetilde{E}}$, then 
$\tilde{\gamma}$ is a simple curve. That is,  $\tilde{\gamma}:[0,1] \to \R^2$ is injective.
In fact, suppose, without loss of generality, that $\tilde{\gamma}(0)=\tilde{\gamma}(1)$. Let $D$ be a disk such that its boundary $\partial D$ is the curve $\tilde{\gamma}$. Since $\widetilde{E}$ is orientable and transverse to $\partial D$, we may define a non-vanishing vector field on $D$. However, by Poincar{\'e}-Bendixson Theorem, every continuous vector field on $D$ transversal to $\partial D$ has a singularity. Therefore, we cannot have $\tilde{\gamma}(t)=\tilde{\gamma}(s)$ with $t\neq s$ in $[0,1]$.

\smallskip

Second, let us prove that there exists $\varepsilon >0$ so that for any ball $B(\tilde{x},\varepsilon)$ the intersection $B(\tilde{x},\varepsilon)\cap \tilde{\gamma}$ has at most one connected component.

Fix $\varepsilon >0$ small enough such that the tangent curve  to $\widetilde{E}$ passing through the
point $\tilde{x}$ divides $B(\tilde{x},\varepsilon)$ in two connected components. It is possible, because
$\widetilde{E}$ induces a continuous vector field on $\R^2$ and it is bounded. Now, suppose that
$\tilde{\gamma}(t_1) \in B(\tilde{\gamma}(t_0),\varepsilon)$ for some $0 \leq t_0 < t_1\leq 1$.
Since $\tilde{\gamma}' \subseteq \mathscr{C}^{\ast}_{\widetilde{E}}$, we can take a disk $D$ such that the
distribution $\widetilde{E}$ induce a continuous vector field on $D$ ($D$ is a disk whose  boundary
is the union of a tangent curve to $\widetilde{E}$ from $\tilde{\gamma}(t_1)$ to $\tilde{\gamma}(t_0)$ and  $\tilde{\gamma}$).
\begin{figure}[!h]
\centering
\includegraphics[scale=0.6]{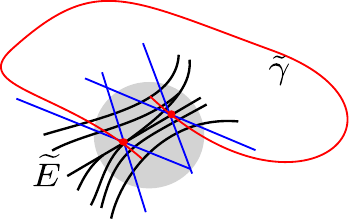}
\caption{The distribution $\widetilde{E}$ and disk $D$.}\label{distribuicao}
\end{figure}
Then, repeating the same arguments one gets, by Poincar{\'e}-Bendixson Theorem, that such vector field has a singularity on $D,$
which is a contradiction, and hence, follows the assertion. Therefore,  we conclude that there exists $\varepsilon>0$ such that $B(\tilde{x},\varepsilon)\cap \tilde{\gamma}$ has at least one component.

\smallskip

Finally, given $L_0\geq 1$ large, up to changing $\varepsilon$, we can assume that every $C^1$ tangent curve to the cone-field with length larger than $L_0$ is not contained in a ball of radius $\varepsilon$. Thus, assume $\ell(\tilde{\gamma})\gg L_0$. Then, consider $k\geq 1$ the largest integer less or equal to $\ell(\tilde{\gamma})/L_0$ and the set
$\{\tilde{x}_1,...,\tilde{x}_k\}$ contained in $\tilde{\gamma}$ such that the curve $\tilde{\gamma}_j$ in $\tilde{\gamma}$ that passes through $\tilde{x}_{j}$ has length $L_0$ and $\{B(\tilde{x}_j,\varepsilon/2)\}_j$ are two-by-two disjoints. Thus, we have that:
\begin{align*}
area(B(\tilde{\gamma},\varepsilon))\geq \dis\sum_{1 \leq j\leq k} area(B(\tilde{x}_j,\varepsilon/2))
\geq C_0 \frac{\ell(\tilde{\gamma})}{2L_0},
\end{align*}
where $C_0$ is the area of the ball of radio $\varepsilon/2$. Therefore, taking $C=C_0/2L_0$ follows the equation \eqref{eq-volume}.
\end{proof}


Now we are able to prove Theorem \ref{thm-C} which is inspired in \cite[Theorem 1.1]{BBI04}.

\begin{proof}[Proof of Theorem \rm{\ref{thm-C}}]
It is well known that there exists a unique square matrix $A$ with  integers entries such that $\tilde{f}=A+\phi$, where $\phi$ is  $\pi_1(M)$-periodic map, that is, $\phi(\tilde{x}+v)=\phi(\tilde{x})$
for every $v \in \pi_1(M)$ and $\tilde{x} \in \R^2$. Assume by contradiction that the absolute value of all the eigenvalues of $A$ are less or equal to one. Thus, the diameter of the images of every compact set under the iterates of $\tilde{f}$ grows sub-exponentially.

Let $B_n$ be a ball centered at $\tilde{x}_n \in \tilde{\gamma}_n$ with radius 
equal to the length of $\tilde{\gamma}_n$ plus $\varepsilon$, where $\tilde{\gamma}_n$ is the image by $\tilde{f}^n$ of a $C^1$-curve $\tilde{\gamma}$ with
$\tilde{\gamma}'\subseteq \mathscr{C}^{\ast}_{\widetilde{E}}$, and $\varepsilon>0$ is given by Lemma \ref{area-curve}.
Since the length
of $\tilde{\gamma}_n$ grows sub-exponentially, we have that the area of $B_n$ grows sub-exponentially. Since $B_n$ contains the neighborhood $B(\tilde{\gamma}_n,\varepsilon)$ of $\tilde{\gamma}_n$ and  Lemma \ref{area-curve}, we have that:
$$area(B_n)\geq area(B(\tilde{\gamma}_n,\varepsilon))\geq C\ell(\tilde{\gamma}_n).$$
However, Theorem \ref{thm-u-arc} implies that the length of $\tilde{\gamma}_n$ grows exponentially, getting a contradiction. Thus, $A$ must have at least one eigenvalue with modulus larger than one.
\end{proof}

Finally, we prove Corollaries \ref{cor1} and \ref{cor2} stated in  Section \cref{intro}.

\begin{proof}[Proof of Corollary \rm{\ref{cor1}}]
The proof follows immediately from the fact that partially hyperbolic endomorphisms admits an unstable cone-field which implies that the iterates of any $C^1$ tangent arc to the unstable cone-field grows exponentially. And so, we can repeat the same argument as in the proof of Theorem \ref{thm-C}.
\end{proof}

\begin{proof}[Proof of Corollary \rm{\ref{cor2}}]
Let $f \in \mathrm{End}^1(M)$ be a robustly transitive endomorphism. 
Note that $f$ either admits a dominated splitting or not.

Assuming $f$ admits a dominated splitting, then Theorem \ref{thm-C} implies that $f$ is homotopic to a linear map having at least one eigenvalue with modulus larger than one, proving our assertion.

On the other hand, if $f$ does not admit a dominated splitting,  Theorem \ref{thm-DS} implies that the set of its critical points is empty, then $f$ is a local diffeomorphism. Observe that if $f$ is approximated by an endomorphism which admits a dominated splitting, then  Theorem \ref{thm-C} implies that $f$ is homotopic to a linear map having at least one eigenvalue with modulus larger than one. Hence, it can be assumed that $f$ is a robustly transitive endomorphism (local diffeomorphism) which has no dominated splitting in a robust way, that is, there exists a neighborhood $\mathcal{W}$ of $f$ in $\mathrm{End}^1(M)$ such that for every $g \in \mathcal{W}$ does not admit a dominated splitting. Since \cite[Theorem 4.3]{LP},  $f$ is volume expanding. Therefore, using the same arguments as in the proof of Theorem \ref{thm-C}, we have that if the absolute value of all the eigenvalues of $A,$ as in Theorem \ref{thm-C}, are less or equal to one, then the area of the ball grows sub-exponentially, contradicting that $f$ is volume expanding, and finishing the proof.
\end{proof}


\section{Expanding direction}\label{sec:exp direction}




Let $f \in \mathrm{End}^1(M)$ be a robustly transitive endomorphism displaying critical points. By Theorem \ref{thm-DS}, $f$ admits a dominated splitting. That is, there exist $\ell \geq 1$ and a splitting $E\oplus F$ of $TM$ so that for all $(x_i)_i \in M_f$ and $i \in \mathbb{Z}$,
\begin{itemize}
\item[-] $Df(E(x_i))\subseteq E(f(x_i)) \ \ \text{and} \ \  Df(F(x_i))=F(f(x_i));$
\item[-] the angle between $E$ and $F$ is uniform bounded away from zero; and,
\item[-] $\|Df^{\ell}\mid_{E(x_i)}\|\leq \frac{1}{2}\|Df^{\ell}\mid_{F(x_i)}\|.$
\end{itemize} 
Or equivalently, there exist a $Df$-invariant continuous subbundle $E$ of $TM$ and a $Df$-invariant continuous cone-field $\mathscr{C}: x \in M \mapsto \mathscr{C}(x,\eta)$ transverses to $E$. Recalling that Proposition \ref{cone-criterion} shows that  both notion of dominated splitting,  ([PH1]) in Section~\ref{intro} and Definition \ref{def-DS} in Section~\ref{section-ph}, are equivalent.


This section is devoted for finishing the proof of Theorem \ref{thm-A}. 
Recalling  Theorem \ref{thm-u-arc}, we have proved so far that the iterates of any arc $\gamma$ tangent to $\mathscr{C}$ grows exponentially.
Thus, it remains to be proved the existence of a real number $\lambda>1$ such that $\|Df^{\ell}\mid_{F(x_i)}\|\geq \lambda$ for all $(x_i)_i \in M_f$ and $i \in \mathbb{Z}$, that is, to show that $F$ is an uniform expanding subbundle.


In order to prove the previous assertion, we assume by contradiction that $F$ is not expanding and use the domination property to prove for every $u$-arc $\gamma$ holds that the subbundle $E$ on $\gamma$ is contracting, recall that $\gamma$ is a tangent arc to $\mathscr{C}(x,\eta)$. Then, taking a small box $W(\gamma)$, we have that its iterates expands on the cone direction and contracts on the $E$ direction. 
Finally, we use that the iterates of the box intersects itself infinitely many times to create, up to a perturbed, a sink, contradicting that the map is  robustly transitive. 

\smallskip

Let us denote by $\gamma_x:[-1,1] \to M$  an $u$-arc of  $C^1$ class with $\gamma_x(0)=x \in M$ having the same order of $[-1,1]$. Follow directly from  the fact that $E$ induce locally a non-vanishing vector field transverse to the cone-field $\mathscr{C}$ and Peano's Theorem that, for all $y \in \gamma_x$, there exists $\xi_y:(-\alpha,\alpha)\to M$ such that $\xi_y(t)\in E(\xi_y(t))$ with $\|\xi_y'(t)\|=1$ for all $t \in (-\alpha,\alpha)$ and $\xi_y(0)=y$. Let $\ubar{\xi},\bar{\xi}:(-\alpha,\alpha) \to M$ be two tangent curves  to the subbundle $E$ with $\ubar{\xi}(0)=\gamma_x(-1)$ and $\bar{\xi}(0)=\gamma_x(1)$.
Define a \textit{$\nu$-box} centered at $\gamma_x$ with $\ubar{\xi}$ and $\bar{\xi}$ as the bottom and top of the box, respectively,  by:
\begin{align}
W_{\nu}(\gamma_x,\ubar{\xi},\bar{\xi})=\left\{\xi_y(t) \in M \left|
\begin{array}{ccc} \ubar{\xi}\leq \xi_y(t) \leq \bar{\xi} \,\, \text{for all} \, \, |t|\leq \nu 
\end{array}\right.\right\},
\end{align}
where $\ell(\xi_y\mid_{[0,\pm\nu]})=\nu$ since $\xi_y$ is parameterized by arc length, and $\ubar{\xi}\leq z \leq \bar{\xi}$ means that every $u$-arc $\gamma_z$ with length larger than $\ell(\gamma_x),$ one has that $\ubar{\xi}(t),\, \bar{\xi}(t') \in \gamma_z$ for some $t, \,t' \in (-\alpha,\alpha)$ and $\ubar{\xi}(t)\leq z \leq \bar{\xi}(t')$ in the induced order by $\gamma_z$. By simplicity, we denote by $\partial^{-}W_{\nu}$ and $\partial^{+}W_{\nu}$, the bottom and the top of the $\nu$-box, respectively, in case there is no confusion about the center, bottom and the top of the $\nu$-box.

\subsection{Existence of a periodic point}
The following result will be used to create a box which is expanding on the cone direction and contracting on the $E$ direction.

\begin{lemma}\label{Existence-pp}
Suppose that there exist $0<\lambda <1,\, C>0$ 
such that for some $x \in M$ holds,
\begin{align}\label{bat-eq} 
\|Df^n\mid_{E(x)}\|\leq C\lambda^n,\, \forall n \geq 1.
\end{align}
Then, for every $\nu>0$ small and $N\geq 1$ large, there is a periodic point $p$ of period $l\geq N$ so that $d(f^j(p),f^j(x))<\nu$ for each $0\leq j \leq l-1$.
\end{lemma}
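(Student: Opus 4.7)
The plan is a closing/shadowing argument exploiting the hyperbolic structure that the hypothesis forces along the forward orbit of $x$: pointwise contraction on $E(x)$, combined with continuity of $E$ and the domination property $E\prec_\ell F$, propagates to a uniform contraction on $E$ in a tube around $\{f^j(x)\}_{j\ge 0}$, while Theorem~\ref{thm-u-arc} provides exponential expansion of $u$-arcs in that tube.

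Following the pattern of Lemma~\ref{lemma-stable-mfld}, I would first integrate $E$ to obtain local stable manifolds $W^{s}_{\varepsilon}(f^j(x))$ of a uniform size $\varepsilon>0$, tangent to $E$, with $f(W^{s}_{\varepsilon}(f^j(x)))\subseteq W^{s}_{\varepsilon}(f^{j+1}(x))$ and exponentially contracted. Next, pick a short $u$-arc $\gamma_0$ through $x$ and form the $\nu$-box $W_0=W_\nu(\gamma_0,\ubar{\xi}_0,\bar{\xi}_0)$, using short stable curves at the endpoints of $\gamma_0$ as top and bottom. By Theorem~\ref{thm-u-arc}, $\ell(f^n(\gamma_0))$ grows exponentially; since $f$ is transitive, for some $l\ge N$ the curve $f^l(\gamma_0)$ must enter $W_0$ through one stable side and leave through the opposite stable side, producing a transversal ``horseshoe-type'' crossing.

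At this stage the map $f^l\colon W_0\to M$ stretches $W_0$ along the cone direction and contracts it along $E$, and $f^l(W_0)\cap W_0$ contains a horizontal sub-strip crossing $W_0$ from $\partial^- W_0$ to $\partial^+ W_0$. A standard Brouwer fixed point argument in charts adapted to the splitting then yields a fixed point $p\in W_0\cap f^{-l}(W_0)$ of $f^l$, hence a periodic point of period dividing $l$, which may be assumed $\ge N$ by taking $N$ large.

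The last step, which I expect to be the main obstacle, is the quantitative $\nu$-shadowing $d(f^j(p),f^j(x))<\nu$ for all $0\le j\le l-1$. The naive bound only places $f^j(p)$ in the iterated box $f^j(W_0)$, which is long in the cone direction, so this is not enough. To get genuine $\nu$-shadowing one must select $p$ on the central stable leaf of the horseshoe and exploit the exponential contraction on $E$ together with a careful calibration of $\nu$ and of the length of $\gamma_0$: a sufficiently short $\gamma_0$ keeps the central u-arc through $f^j(p)$ shorter than $\nu$ at intermediate times, while the $E$-fibre of $f^j(W_0)$ stays thin throughout by the stable contraction. This quantitative shadowing estimate, classical in hyperbolic theory but requiring care in the endomorphism setting, is the technical crux of the lemma.
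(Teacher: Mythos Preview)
Your overall strategy---propagate the contraction on $E$ to a tube around the orbit, grow a $u$-arc through $x$ via Theorem~\ref{thm-u-arc}, use transitivity to obtain a Markovian return to a box $W_0$, and extract a periodic point by a nested-strip (or Brouwer) argument---is precisely the scheme the paper follows. You also correctly isolate the shadowing estimate $d(f^j(p),f^j(x))<\nu$, $0\le j\le l-1$, as the real difficulty.

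The gap is in your proposed fix for that difficulty. Shrinking $\gamma_0$ does not keep $f^j(\gamma_0)$ short at intermediate times: by Theorem~\ref{thm-u-arc} the length grows exponentially regardless of the initial size, and at the return time $l$ it must be long enough to cross $W_0$, so for $j$ close to $l$ the image $f^j(W_0)$ is a long thin strip in which $f^j(p)$ and $f^j(x)$ can be far apart in the cone direction. Placing $p$ on a ``central stable leaf'' does not help either, since there is no reason the fixed point of $f^l$ should lie on $W^s_\varepsilon(x)$.

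The paper resolves this not by controlling a single long image $f^l(W_0)$, but by building a \emph{tube of boxes along the orbit} and trimming at every step. For each $n$ one lets $\gamma_n$ be the connected component of $f^n(\gamma_x)$ inside a small ball around $f^n(x)$, forms the $\nu$-box $W_n$ centred at $\gamma_n$, and sets inductively
\[
D_0=W_\nu,\qquad D_n=f(D_{n-1})\cap W_n.
\]
The propagated $E$-contraction forces the $E$-width of $D_n$ to decay like $C((1+a)\lambda)^n\nu$, while the intersection with $W_n$ keeps its $u$-extent bounded by that of $\gamma_n$; thus each $D_n$ stays in a fixed small neighbourhood of $f^n(x)$. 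Transitivity is then used to find $l\ge N$ with $D_l$ crossing $D_0$ from $\partial^-D_0$ to $\partial^+D_0$, and the usual nested vertical/horizontal strip argument produces a periodic point $p$ with $f^j(p)\in D_j$ for every $0\le j\le l-1$. The shadowing is now automatic: it is built into the inductive trimming $D_n\subseteq W_n$, not recovered a posteriori. That inductive intersection with a fresh box at each time step is the missing idea in your sketch.
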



\begin{proof}
Fix $a>0$ so that $(1+a)\lambda < 1$. By continuity of $y \mapsto \|Df\mid_{E(y)}\|$, there is $\nu_0 > 0$ such that:
\begin{itemize}
\item[-] $\|Df\mid_{E(z)}\|<a$ for every $z \in B(\mathrm{Cr}(f),\nu_0)$;
\item[-] $\|Df\mid_{E(y)}\|\leq (1+a)\|Df\mid_{E(z)}\|, \, \forall y,z \in M$ with $d(y,z)<\nu_0$.
\end{itemize}
Thus,  whenever $d(f^j(y),f^j(x))\leq \nu_0$ for each $0\leq j \leq n-1$, we have that: $$\|Df^n\mid_{E(y)}\|\leq C((1+a)\lambda)^n.$$

Fix $0<\nu<\nu_0$ small enough and $N\geq 1$ large enough so that for every $u$-arc $\gamma$ with $\ell(\gamma)\geq \nu/2$ one has $\ell(f^n(\gamma))\geq 2^n\rho$, for $n\leq N,$ where $\rho > 0$ is given in the proof of Theorem \ref{thm-u-arc}. Moreover, choose $n_0\geq 1 $ so that $2^{n_0}\rho\geq 2\nu$ and $N\geq n_0$. Since the connected components $\gamma^{-}_x=\gamma_x\mid_{[-1,0]}$ and $\gamma^{+}_x=\gamma_x\mid_{[0,1]}$ of an $u$-arc $\gamma_x$ are $u$-arcs as well, we can assume, without loss of generality, that the length of $f^n(\gamma^{\pm}_x)$ are larger than $2\nu_0$ for every $n\geq n_0$. On the other hand, if the $\nu$-box $W_{\nu}(\gamma_x,\ubar{\xi},\bar{\xi})$ is contained in $B(x,\nu_0),$ then for all $\xi_y(t) \in W_{\nu}(\gamma_x,\ubar{\xi},\bar{\xi})$, for all $t\in [0,\nu]$, one verifies that:
\begin{align}\label{eq:xi}
\begin{split}
\ell((f\circ \xi_y)\mid_{[0,t]})& =\int_{0}^{t}\|(f\circ \xi_y)'(s)\|ds \\ & \leq\int_{0}^{t}\|Df\mid_{E(\xi_y(s))}\|\|\xi_y'(s)\|ds\leq C((1+a)\lambda)t, \, \forall t \in [0,\nu].  
\end{split}
\end{align}

Denote by $\gamma_n$ the connected component of $f^n(\gamma_x)$ in the ball $B(f^n(x),\nu_0)$ containing $f^n(x)$, and by $W_n$ the $\nu$-box centered at $\gamma_n$, assuming $W_0=W_{\nu}$. Moreover, suppose that $0<\nu< \nu_0$ small enough so that for every $y \in W_n$ and $u$-arc $\gamma_y$ with $\ell(\gamma_y)\geq 2\nu$ holds $\gamma_y \pitchfork \partial^{\pm}W_n\neq \emptyset$.
We define by induction the following strips: 
\begin{align}
D_0=W_{\nu} \ \ \text{and} \ \ D_n=f(D_{n-1})\cap W_n.
\end{align}
Observe that as $E$ is $Df$-invariant, one has that $f\circ \xi_y$, up to parametrizing by arc length, is an arc of the form $\xi_{f(y)}$. Therefore, for every $y \in D_n$ there exists $\xi_0:[0,t_0]\to B(x,\nu_0)$ such that  $\xi_0(0) \in \gamma_x$ and $\xi_0(t_0)=y_0$ verifying $f^n(y_0)=y$ and $\xi_i(t)=(f^i\circ \xi_0)(t)$ belongs to $D_i$, for all $t \in [0,t_0]$ and $i=1,...,n$. In particular, $\xi_i(t) \in B(f^i(\xi_0(0)),\nu)$ for all $t \in [0,t_0]$, and so, one has that:
\begin{align*}
\ell(\xi_i\mid_{[0,t]}) \leq \int_{0}^{t}\|(f^i\circ \xi_0)'(s)\|ds\leq C((1+a)\lambda)^i t, \, \forall t \in [0,t_0].  
\end{align*}
Thus,  $\mathrm{diam}(D_n)$ goes to zero as $n$ goes to infinity.

Since  $f$ is transitive, we can find $(z_n)_n$ with $z_n$ and $f^{l_n}(z_n)$ converging to $x$ as $n$ goes to infinity. In particular, the $u$-arc $\gamma_{l_n}$ in $D_{l_n}$ verifies  $\gamma_{l_n}\pitchfork \partial^{\pm}D_0\neq \emptyset$.
\begin{figure}[h]
\centering
\includegraphics[scale=0.7]{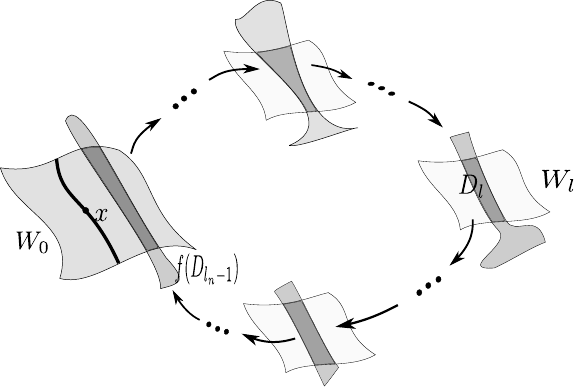}
\caption{The action of $f$.}
\end{figure}

Roughly speaking, for every $l\geq N$ we have that $f$ acts on $D_l$ expanding in the ``vertical" direction (cone-field $\mathscr{C}$) and contracting in the ``horizontal" direction (tangent to $E$).

\smallskip

Therefore, fixing $l_n$ as above, there exists $D_0'\subseteq D_0$ such that $f^{l_n}:D_0' \longrightarrow D_{l_n}$. Then, we can find a periodic point of period $l_n$ as follows:
\begin{itemize}
\item [Step 1.] Repeating the process with $D_0'\cap D_{l_n}$ by $f^{kl_n}$ we obtain a sequence of ``vertical" boxes $(\Gamma_k)_k$ such that:
$$\Gamma_{1}\supseteq \Gamma_{2}\supseteq \cdots \supseteq \Gamma_{k}\supseteq \cdots$$
where $\bigcap_k \Gamma_k=\Gamma$ is a ``vertical"  curve transverse to the box.

\item[Step 2.] Similarly, we have a sequence of ``horizontal" boxes $(\Sigma_k)_k$ such that:
$$\Sigma_1 \supseteq \Sigma_{2} \supseteq \cdots \supseteq \Sigma_k \supseteq \cdots$$
where $\bigcap_k \Sigma_k=\Sigma$ is a ``horizontal" curve.

\item[Step 3.] Therefore, $\Gamma \cap \Sigma=\{p\}$ is a periodic point of period $l_n$.
\end{itemize}

\begin{figure}[h]
\centering
\includegraphics[scale=0.7]{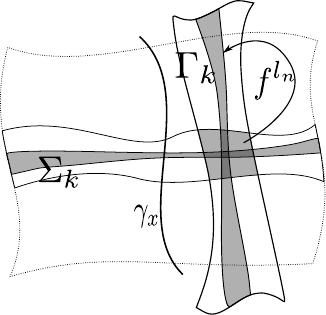}
\caption{Existence of a periodic point.}
\end{figure}

 Observe that $d(x,p)<\nu$ and $f^i(p)\in D_i$ for each $i=0,..., l_{n}-1$. In particular, $d(f^i(p),f^i(x))<\nu$ for $i=0,\dots,l_{n}-1$, finishing the proof.
\end{proof}


Finally, we prove Theorem \ref{thm-A}.

\subsection{Proof of  Theorem A}

In Theorem \ref{thm-DS}, we proved that every $f$ robustly transitive endomorphism displaying critical points 
admits a dominated splitting. 
Let us denote it by $E\oplus F$. Recall that for any $(x_i)_i \in M_f$, the subbundles $E$ and $F$ at $(x_i)_i$ are denoted by $E(x_0)$ and $F(x_0)$, and the action of $f$ on $M_f$ is the \textit{shift map} on $M_f$, that is, $f^n((x_i)_i)=(x_{n+i})_i$ for $n \in \mathbb{Z}$. 

From now on, we will use a classical idea to create sinks using the domination property and vanishing Lyapunov exponent. 

Recalling that we wish to prove that $f$ is partially hyperbolic, suppose instead that $f$ is not a partially hyperbolic endomorphism. Then, Proposition \ref{ph-end} implies the existence of $\tau >0$ such that for every $k \in \mathbb{N}$, there exists $(x_i^k)_i \in M_f$ such that for every $1\leq j \leq k$ holds:
\begin{align*}
\|Df^j\mid_{F(x_0^k)}\|\leq 1+\tau.
\end{align*}
For simplicity, let us denote  $(x_i^k)_i$ by $x^k$. 
Thus, for every $k \in \mathbb{N}$, we define  a measure $\mu_k$ on $M_f$ as follows:
\begin{align*}
\mu_k=\dfrac{1}{k}\sum_{j=0}^{k-1} \delta_{f^j(x^k)},
\end{align*}
where $\delta_{f^j(x^k)}$ denotes the Dirac measure at $(x_{j+i}^k)_i$ in $M_f$. Let $\mu$ be a $f$-invariant measure on $M_f$ which is obtained as an accumulation point of $(\mu_k)_k$. Thus, up to take a subsequence, follows that:
$$\int{\Phi d\mu_k} \to \int{\Phi d\mu}, \ \  \text{for all}\; \Phi \; \text{continuous}.
$$
In particular, $\Phi(\cdot)=\log \|Df\mid_{F(\cdot)}\|$ satisfies:
\begin{align*}
\int{\log \|Df\mid_{F}\|d\mu}&=\lim_{k\to \infty} \int{\log \|Df\mid_{F}\|d\mu_{k}}\\
&=\lim_{k\to \infty} \frac{1}{k}\sum_{j=0}^{k-1} \log \|Df\mid_{F(f^j(x^k))}\|\\
&=\lim_{k\to \infty} \frac{1}{k}\log \|Df^{k}\mid_{F(x_0^{k})}\|\\
&\leq \lim_{k\to \infty} \frac{1}{k} \log (1+\tau) =0.
\end{align*}
On the other hand, using Birkhoff's Ergodic Theorem and Poincar{\'e}'s Recurrence Theorem, there is a recurrent point $(x_i)_i \in M_f$ such that:
\begin{align*}
\lim_{k\to \infty} \frac{1}{k}\sum_{i=0}^{k-1} \log \|Df\mid_{F(x_i)}\|\leq 0.
\end{align*}
Therefore, for every $\varepsilon >0$ there exists $k_0\geq 1$ such that:
\begin{align}\label{eq-lyap-1}
\|Df^k\mid_{F(x_0)}\|=\prod_{i=0}^{k-1}\|Df\mid_{F(x_i)}\|\leq e^{k\varepsilon},\, \text{for all}\;  k \geq k_0.
\end{align}
Since $E\oplus F$ is the dominated splitting for $f$, we have that there exists $C>0$ such that
for every $ (x_i)_i \in M_f$ and $i \in \mathbb{Z}$ hold that:
\begin{align}\label{eq-lyap-2}
\|Df^k\mid_{E(x_i)}\|\leq C\left(\frac{1}{2}\right)^{k}\|Df^k\mid_{F(x_i)}\|,  \, \text{for all}\;  k\geq 1.
\end{align}
In particular, choosing $\varepsilon>0$ small enough so that $\lambda_0=e^{\varepsilon}/2<1$, we get by equations \eqref{eq-lyap-1} and \eqref{eq-lyap-2} that:
\begin{align*}
\|Df^k\mid_{E(x_0)}\|\leq C\lambda_0^k, \, \text{for all}\; k \geq k_0.
\end{align*}
In other words, up to change the constant $C>0$, we have that,
\begin{align}
\|Df^k\mid_{E(x_0)}\|\leq C\lambda_0^k,  \, \text{for all}\; k \geq 1.
\end{align}
Therefore, applying  Lemma \ref{Existence-pp}, there exists a periodic point $p$ of period $k$ large enough so that the eigenvalues of $Df^k$ at $p$  in modulus are at most $e^{k\varepsilon}\approx (1+\tau)$.
On the other hand, if we consider $L_p:\oplus_{i=0}^{k-1}T_{f^i(p)}M \to \oplus_{i=0}^{k-1}T_{f^i(p)}M$ defined by
$$L_p(v_0,v_1,\dots,v_{k-1})=(Df(v_{k-1}),Df(v_0),\dots, Df(v_{k-2})),$$
we have that
$$\omega \ \ \text{is an eigenvalue of} \ \ L_p \Longleftrightarrow \omega^k \ \ \text{is an eigenvalue of} \ \ Df^k_p.$$
Suppose, without loss of generality, that $\omega$ is the eigenvalue with maximum modulus and it satisfies $1-|\omega|^{-1}<\varepsilon$, where $\varepsilon$ is small enough. Then, by \hyperref[Franks-lemma]{Franks' Lemma}, there exists  
a perturbation $h$ of $f$ such that $h^i(p)=f^i(p)$ and $Dh=(|\omega|^{-1}-\varepsilon)Df$ at $f^i(p)$. Hence, $p$ is a sink for $h$ which contradicts that $h$ is transitive, recalling that $f$ is robustly transitive. This proves  Theorem \ref{thm-A}. \qed


\subsection*{Acknowledgments:}
The authors are grateful to E. Pujals, L. Mora, R. Potrie, and S. Luzzatto for
useful and encouraging conversations and suggestions. The authors
are also grateful for the nice environment provided by IMPA, PUC-Rio, UdelaR, UFBA, UFAL and ICTP
during the preparation of this paper. The first author was partially supported with CNPq-IMPA funds and ULA-Venezuela, and the second author by CNPq-IMPA, UFAL and ICTP. 




 \bibliographystyle{alpha}
 \bibliography{Thesis-Biblio}



\end{document}